\theoremstyle{plain}
\newtheorem{conjecture}{Conjecture}
\newtheorem{lemma}{Lemma}
\newtheorem{proposition}{Proposition}
\newtheorem{theorem}{Theorem}
\newtheorem{corollary}{Corollary}
\theoremstyle{definition}
\newtheorem{definition}{Definition}
\newtheorem{algorithm}{Algorithm}
\theoremstyle{remark}
\newtheorem{remark}{Remark}
\setlist[enumerate,1]{label={\roman*)},ref={(\roman*)}} 
\setlist[enumerate,2]{label={\arabic*)},ref={(\arabic*)}} 
\newcommand{\R}{\mathbb{R}}  
\newcommand{\Z}{\mathbb{Z}}  
\newcommand{\N}{\mathbb{N}}  
\newcommand{\bfr}{{\bf r}}
\newcommand{\bfx}{{\bf x}}
\newcommand{\Id}{\operatorname{Id}}
\newcommand{\prox}{\operatorname{Prox}}
\newcommand{\Tr}{\operatorname{Tr}}
\newcommand{\dom}{\operatorname{dom}}
\newcommand{\ran}{\operatorname{ran}}
\newcommand{\Hilb}{\mathcal{H}}
\newcommand{\ext}{{\text{ext}}}
\newcommand{\supdiff}{\overline{\partial}}
\newcommand{\subdiff}{\underline{\partial}}
\newcommand{\hilite}[1]{}
\title{Moreau--Yosida regularization in  DFT}
\author{Simen Kvaal \\ \url{simen.kvaal@kjemi.uio.no}}
\affil{Hylleraas Centre for Quantum Molecular Sciences,\\
Department of Chemistry, University of Oslo,\\
NO-0315, Oslo, Norway}
\date{June 2022}
\begin{document}
\maketitle
\begin{displayquote}
    This is a preprint of the following chapter: S.~Kvaal, \emph{Moreau--Yosida Regularization in DFT}, to be published in  \emph{Density Functional Theory — Modeling, Mathematical Analysis, Computational Methods, and Applications}, edited by Eric Cancès and Gero Friesecke, Springer (2022). Reproduced with permission of Springer. 
\end{displayquote}

\begin{abstract}
Moreau--Yosida regularization is introduced into the framework of exact DFT. Moreau--Yosida regularization is a lossless operation on lower semicontinuous proper convex functions over separable Hilbert spaces, and when applied to the universal functional of exact DFT (appropriately restricted to a bounded domain), gives a reformulation of the ubiquitous $v$-representability problem and a rigorous and illuminating derivation of Kohn--Sham theory.

The chapter comprises a self-contained introduction to exact DFT, basic tools from convex analysis such as sub- and superdifferentiability and convex conjugation, as well as basic results on the Moreau--Yosida regularization. The regularization is then applied to exact DFT and Kohn--Sham theory, and a basic iteration scheme based in the Optimal Damping Algorithm is analyzed. In particular, its global convergence established. Some perspectives are offered near the end of the chapter.
\end{abstract}
\newpage
\tableofcontents


\section{Introduction}
\label{sec:introduction}

In this chapter, we introduce  Moreau--Yosida regularization into the exact DFT framework. Such regularization of convex optimization problems is lossless: Given a small $\epsilon>0$, a non-smooth convex functional $F[\rho]$ is regularized into a differentiable convex functional ${}^\epsilon F[\rho]$. However, this smoothing is invertible, such that the information encoded is not lost. Thus, the regularization gives a \emph{reformulation} of the convex optimization problem as a differentiable convex optimization problem. When applied to exact DFT, we gain insight into the ubiquitous $v$-representability problem and also a rigorous formulation of Kohn--Sham theory. This comes at a small price,  as we need to describe densities and potentials using Banach spaces that are reflexive, which excludes, say, exact Coulomb potentials on infinite domains. This trade-off is acceptable, as no physical system is truly infinite in extent. Indeed, placing the system in a finite but large box is the only approximation being made in this chapter.

The motivation for introducing Moreau--Yosida regularization comes from two directions: First, DFT is committed to treating ``all possible'' potentials at once. For the exact interacting problem, this is surely overkill: After all we are mostly interested in the atomic Coulomb potentials, or at most well-behaved potentials modeling, say, harmonic traps. On the other hand, passage to the non-interacting Kohn--Sham system turns the problem upside down: The density is the known quantity, while the effective potential is the unknown function of the density.  The standard approach is to differentiate the universal functional to obtain the exchange-correlation potential, but \emph{this is not permitted in exact DFT}. 
Indeed, the large class of potentials implies that the universal functional will be highly non-smooth.
%
We note in passing, that in the strongly correlated limit of DFT, where one obtains the purely-interacting universal functional, the exchange-correlation potential can in fact be obtained rigorously by differentiation, as shown in the chapter by Friesecke, Gerolin and Gori--Giorgi.
The second motivation for Moreau--Yosida regularization is that formal Kohn--Sham theory relies on densities to be both interacting and non-interacting $v$-representable. This problem vanishes in thin air in the regularized Kohn--Sham formulation.

While Moreau--Yosida regularization resolves some problems, it still leaves open the question of how to approximate the regularized universal functional ${}^\epsilon F[\rho]$ in a systematic manner. On the other hand, the method of Moreau--Yosida regularization is applicable also to model density functional approximations that are already convex.

This chapter has the following structure.
In Section~\ref{sec:exact-dft} we give a brief introduction to the convex formulation of exact DFT as given by Lieb~\cite{Lieb1983}. Some results from the theory of quadratic forms over Hilbert spaces are needed, and we will refer the reader to the excellent literature that exists. The article~\cite{Simon1971} by Simon lays out the formulation of quantum mechanics using quadratic forms, and the monograph~\cite{Schmuedgen2012} by Schm\"udgen gives further details, including the representation theorem of bounded-below closed quadratic forms. We include some definitions and theorems from the analysis of convex functions over Banach spaces, and also give proofs whenever deemed instructive. For an excellent and accessible introduction to convex analysis, see the short monograph~\cite{VanTiel1984} by van Tiel. After introducing exact DFT, we proceed to introduce the only approximation we need, that is, truncation of infinite space $\R^3$ to a bounded domain $\Omega \subset \R^3$. This allows formulation of exact DFT in a Hilbert space setting, or more generally in a reflexive Banach space setting.

In Section~\ref{sec:my} we introduce the Moreau--Yosida regularization of convex problems over Hilbert spaces. For a more detailed exposition for the Hilbert space setting, see the monograph~\cite{Bauschke2011} by Bauschke and Combettes. Most results can be wholly or partially generalized to convex functions over reflexive Banach spaces, see the monograph~\cite{Barbu2012} by Barbu and Precupanu. In our exposition, we include some important definitions and results, with complete proofs in many cases due to the central nature of the material. In the following Section~\ref{sec:my-dft}, we apply Moreau--Yosida regularization to box truncated exact DFT. In particular, we detail the rigorous derivation of Kohn--Sham theory in the regularized setting, including a basic analysis of the Moreau--Yosida Kohn--Sham optimal damping algorithm (MYKSODA). Our treatment is adapted from Refs.~\cite{Kvaal2014,Kvaal2015,Laestadius2018b,Penz2019,Penz2019erratum}.

Finally, in Section~\ref{sec:conclusion} we provide a conclusion and discuss opportunities for future research.

The author gratefully acknowledges helpful feedback from Andre Laestadius and Markus Penz, and also Gero Friesecke who additionally provided the complete proof of the fact that finite-kinetic energy $N$-electron wavefunctions have densities with square roots in the Sobolev space $H^{1}(\R^3)$, see Theorem~\ref{thm:N-rep}.

\section{Exact DFT}
\label{sec:exact-dft}

In this section, we give a brief outline of elements of exact DFT, paying special attention to the regularity, or lack thereof, of the universal functional and ground-state energy. We then regularize exact DFT to a finite but large subdomain $\Omega \subset \R^3$. Our focus is on molecular electronic systems.

\subsection{The variational ground-state problem}
\label{sec:exact-dft-1}
In this section, we use the definitions
\begin{align}
  L^2_N &:= \bigwedge_{i=1}^N L^2(S), \quad S = \R^3\times\Z_2 \, , \\
  H^k_N &:= H^k(S^N) \cap L^2_N\, .
\end{align}
Here, $H^k(S^N)$ denotes the set of elements of $L^2(S^N)$ whose $2^N$ spatial components are elements of the standard $k$'th order Sobolev space $H^k(\R^{3N})$, i.e., each spatial component have square-integrable weak partial derivatives up to order $k$~\cite{Schmuedgen2012}.
The starting point for DFT is the ground-state problem of $N$ interacting electrons in an external potential $v$ on variational form, i.e.,
\begin{equation}
    E[v] := \inf \left\{
        \mathcal{E}_0[\psi] + \mathcal{V}[\psi] \; \mid \; \psi \in \mathcal{W}_N
    \right\}\label{eq:Ev},
\end{equation}
with $\mathcal{W}_N = \{\psi \in H^1_N  \mid \|\psi\|_2 = 1\}$ consisting of the normalized finite kinetic-energy wavefunctions,
and with
\begin{align}
    \mathcal{E}_0[\psi] &= \frac{1}{2}\braket{\nabla\psi,\nabla\psi} + \braket{\psi,W\psi}, \\
    \mathcal{V}[\psi] &= \braket{\psi,V\psi} = \int \rho_\psi(\bfr)v(\bfr)\, d^3\bfr \,.
\end{align}
Here, $V$ is the many-electron operator corresponding to the potential $v$, and $\rho_\psi$ is the one-electron density of $\psi$.
In order to make connection with more standard formulations of quantum mechanics, the variational minimization in in terms of quadratic forms in Eq.~\eqref{eq:Ev} needs to be connected with the spectral theory of selfadjoint operators.
Intuitively, the kinetic energy part of the variational formulation is obtained by integration by parts, enlarging the domain of the kinetic energy operator term $T = -\tfrac{1}{2}\nabla^2$ in the full Hamiltonian. However, the transition back and forth between a selfadjoint operator and a quadratic form is subtle, and for some potentials the domain of the Hamiltonian \emph{operator} is not $H^2_N$ due to singular behavior of $v$. For this exposition, we let the following be sufficient: Whenever $v \in L^{3/2}(\R^3) + L^\infty(\R^3)$, $\mathcal{V}$ will be relatively bounded with respect to $\mathcal{E}_0$, which means that as perturbation it is sufficiently gentle to allow the sum of the forms to be well-defined with domain $H^1_N$ by the KLMN theorem~\cite{Schmuedgen2012,Simon1971}. The fundamental representation theorem of closed semibounded quadratic
forms~\cite{Schmuedgen2012} guarantees  the existence of a unique self-adjoint operator $\hat{H}[v] : D[v] \to L^2$ such that for every $\psi$ in the operator domain $D[v]$, we have $\mathcal{E}_0[\psi] + \mathcal{V}[\psi] = \braket{\psi,\hat{H}[v]\psi}$. We point out that $D[v]\subset H^1_N$ is dense in $L^2_N$, and that $v\in L^2(\R^3) + L^\infty(\R^3)$ is sufficient to guarantee $D[v] = H^2_N$, a fundamental result by Kato~\cite{Kato1951}. However, for the stronger singularities present in $L^{3/2}(\R^3)+ L^\infty(\R^3)$ it may happen that $D[v]$ becomes a proper subset of $H^2_N$. On the other hand, the \emph{form domain} is always $H^1_N$ for every $v \in L^{3/2}(\R^3) + L^\infty(\R^3)$. In any case, the infimum in Eq.~\eqref{eq:Ev} will be the
bottom, i.e., infimum, of the spectrum of $\hat{H}[v]$, and hence the connection is complete.

\subsection{Densities}

Central to DFT is the density $\rho_\psi \in L^1(\R^3)$ associated with a (normalized) $\psi \in L^2_N$.
\begin{definition}
For any $\psi \in L^2_N$, we define the density $\rho_\psi : \R^3 \to [0,+\infty)$ almost everywhere (a.e.) by the formula
\begin{equation}
  \rho_\psi(\bfr) := N \sum_{s\in\Z_2}\int_{S^{N-1}} |\psi((\bfr,s),\bfx_2,\cdots,\bfx_N)|^2\, d\bfx_2\cdots d\bfx_N\,.
\end{equation}
For a given $\rho \in L^1(\R^3)$, we write $\psi\mapsto \rho$ if $\rho_\psi = \rho$.
\end{definition}
It is immediate that $\rho_\psi \geq 0$ a.e., and that $\|\rho_\psi\|_1 = N\|\psi\|^2$.
In fact, we have a complete characterization of densities that come from elements $\psi \in \mathcal{W}_N$. In the proof of the following theorem, the proof that the density of $\psi \in \mathcal{W}_N$ satisfies $\rho_\psi^{1/2}\in H^1(\mathbb{R}^3)$ is due to G.~Friesecke.  
\begin{theorem}[$N$-representable densities]\label{thm:N-rep}
 Let $\mathcal{D}^N$ be the set of measurable functions $\rho : \R^3 \to \R$ that satisfy $\rho \geq 0$ a.e., $\rho^{1/2} \in H^1(\R^3)$ with $\|\rho\|_1 = N$. Then for every $\rho \in \mathcal{D}^N$ there is a $\psi \in \mathcal{W}_N$ such that $\psi \mapsto \rho $. Conversely, for every $\psi \in \mathcal{W}_N$, $\rho_\psi \in \mathcal{D}^N$. Moreover, $\mathcal{D}^N$ is convex.
\end{theorem}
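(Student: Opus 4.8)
The statement splits into three logically independent claims, which I would handle in the order (2), (1), (3): the backward inclusion $\mathcal{W}_N \ni \psi \mapsto \rho_\psi \in \mathcal{D}^N$ carries the main technical content, and the same circle of ideas reappears in the convexity argument. For (2), positivity $\rho_\psi \geq 0$ and $\|\rho_\psi\|_1 = N\|\psi\|_2^2 = N$ are already recorded in the text, so the only substantive point is $\rho_\psi^{1/2} \in H^1(\R^3)$. Since $\|\rho_\psi^{1/2}\|_2^2 = \|\rho_\psi\|_1 = N < \infty$, it remains to exhibit a weak gradient in $L^2$, and the mechanism is the Hoffmann--Ostenhof inequality. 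Formally, $\nabla \rho_\psi = 2N \sum_s \int \mathrm{Re}(\bar\psi\, \nabla_1 \psi)\, d\bfx_2 \cdots d\bfx_N$, and a pointwise Cauchy--Schwarz in the measure $\sum_s \int (\cdots)\, d\bfx_2 \cdots d\bfx_N$ gives $|\nabla \rho_\psi| \leq 2\sqrt{N}\, \rho_\psi^{1/2} (\sum_s \int |\nabla_1 \psi|^2)^{1/2}$, whence $|\nabla \rho_\psi^{1/2}|^2 = |\nabla \rho_\psi|^2 / (4\rho_\psi) \leq N \sum_s \int |\nabla_1 \psi|^2$. Integrating in $\bfr$ and invoking the antisymmetry of $\psi$ (so each of the $N$ coordinates contributes equally) yields the clean bound $\int |\nabla \rho_\psi^{1/2}|^2 \leq \|\nabla \psi\|_2^2 < \infty$. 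The delicate point, and what I expect to be the \emph{main obstacle} (presumably the part supplied by Friesecke), is making this rigorous where $\rho_\psi$ vanishes: the chain rule $\nabla \rho_\psi^{1/2} = \nabla \rho_\psi / (2\rho_\psi^{1/2})$ is ill-defined on $\{\rho_\psi = 0\}$. The standard remedy is to regularize, working with $(\rho_\psi + \epsilon)^{1/2}$, establishing the bound uniformly in $\epsilon > 0$, and passing to the limit by weak lower semicontinuity of the $L^2$-gradient norm; one also needs $\rho_\psi \in W^{1,1}_{\mathrm{loc}}$ together with the standard Sobolev fact that $\nabla \rho_\psi = 0$ a.e.\ on $\{\rho_\psi = 0\}$ to control the zero set.

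For (1), I would use the Harriman--Lieb equidensity-orbital construction. Set $\varphi(\bfr) := (\rho(\bfr)/N)^{1/2}$, which lies in $H^1(\R^3)$ (as $\varphi = \rho^{1/2}/\sqrt{N}$) and is normalized, $\|\varphi\|_2^2 = N^{-1}\|\rho\|_1 = 1$. The goal is $N$ orthonormal orbitals of common modulus $\varphi$, taken as $\varphi_k := \varphi\, e^{i g_k}$ with real phases $g_k$; assigning them all a common spin, the Slater determinant $\psi = (N!)^{-1/2} \det\big(\varphi_k(\bfx_j)\big)_{k,j=1}^N$ then lies in $H^1_N$, is normalized by orthonormality, and satisfies $\rho_\psi = \sum_k |\varphi_k|^2 = N \cdot (\rho/N) = \rho$. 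Orthonormality is arranged à la Harriman by taking $g_k = k\, g$ with $g$ pushing the measure $N^{-1}\rho\, d^3\bfr$ forward to the uniform measure on one period, which turns $\int \varphi_k \overline{\varphi_l} = N^{-1} \int \rho\, e^{i(k-l)g}$ into the orthogonality relation of trigonometric functions. The secondary technical difficulty here is \emph{finite kinetic energy}: one must secure $\varphi_k \in H^1$, i.e.\ $\int \rho\, |\nabla g|^2 < \infty$. This does not follow from a naive one-dimensional cumulative phase (whose gradient integral reduces to $\int_\R \mu(x)^3\, dx$ for the $x$-marginal $\mu$, not controlled by $\rho^{1/2} \in H^1$ alone), and is precisely where the regularity $\rho^{1/2} \in H^1(\R^3)$, together with the Sobolev embedding $H^1(\R^3) \hookrightarrow L^6(\R^3)$ (equivalently $\rho \in L^3(\R^3)$), must be exploited after a sufficiently careful, possibly adaptive, choice of the foliation defining $g$.

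For (3), given $\rho_0, \rho_1 \in \mathcal{D}^N$ and $t \in [0,1]$, the convex combination $\rho = (1-t)\rho_0 + t\rho_1$ clearly satisfies $\rho \geq 0$ and $\|\rho\|_1 = N$, and $\rho^{1/2} \in L^2$ since $\|\rho^{1/2}\|_2^2 = N$. The one nontrivial point, $\rho^{1/2} \in H^1$, is exactly the convexity of the von Weizsäcker functional $\rho \mapsto \int |\nabla \rho^{1/2}|^2 = \tfrac{1}{4} \int |\nabla \rho|^2 / \rho$. From $\rho_i^{1/2} \in H^1$ one gets $\rho_i \in W^{1,1}$ with $\nabla \rho_i = 2 \rho_i^{1/2} \nabla \rho_i^{1/2} \in L^1$, so $\rho \in W^{1,1}$ with $\nabla \rho = (1-t)\nabla \rho_0 + t \nabla \rho_1$. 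The integrand $(\rho, \nabla \rho) \mapsto |\nabla \rho|^2 / \rho$ is the perspective of $p \mapsto |p|^2$ and hence jointly convex, giving the pointwise bound and, upon integration, $\int |\nabla \rho^{1/2}|^2 \leq (1-t) \int |\nabla \rho_0^{1/2}|^2 + t \int |\nabla \rho_1^{1/2}|^2 < \infty$. As in part (2), the only care needed is the handling of $\{\rho_0 = 0\} \cup \{\rho_1 = 0\}$, where $\nabla \rho_i = 0$ a.e. This yields $\rho^{1/2} \in H^1$ and closes the argument.
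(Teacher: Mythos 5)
Your parts (2) and (3) follow essentially the same route as the paper: the pointwise Cauchy--Schwarz (Hoffmann--Ostenhof) bound $|\nabla\rho_\psi|^2 \leq C\,\rho_\psi\, t$ with $t$ the kinetic-energy density, a regularization to handle the zero set of $\rho$, and passage to the limit by weak compactness; and for convexity, your ``perspective function'' argument is exactly the paper's Young-inequality computation $|\nabla\rho|^2 \leq 4\rho\,(\lambda|\nabla\phi_1|^2+(1-\lambda)|\nabla\phi_2|^2)$ in different clothing. One technical caveat in (2): regularizing with a \emph{constant} $\epsilon$ gives $(\rho_\psi+\epsilon)^{1/2}\notin L^2(\R^3)$, so this family is not bounded in $H^1(\R^3)$ and you cannot literally invoke weak compactness or lower semicontinuity of the $H^1$ norm for it. The paper avoids this by adding $\epsilon G$ with $G(\bfr)=e^{-|\bfr|^2}$, which keeps $\rho_\epsilon^{1/2}\in L^2$; alternatively you can work with $(\rho_\psi+\epsilon)^{1/2}-\epsilon^{1/2}$, or keep only the uniform $L^2$ bound on the gradients and identify the weak limit distributionally. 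As written, this step has a hole, but it is a repairable one and the underlying idea is the paper's.

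The genuine gap is in part (1). The paper simply cites Lieb's Theorem 1.2 for the Harriman-type equidensity-orbital construction; you sketch that construction but then leave its crucial step --- finiteness of $\int \rho\,|\nabla g|^2$ --- unresolved, on the grounds that the ``naive one-dimensional cumulative phase'' produces $\int_\R \mu(x_1)^3\,dx_1$ with $\mu$ the $x_1$-marginal, which you assert is ``not controlled by $\rho^{1/2}\in H^1$ alone.'' That assertion is false, and it is precisely how Lieb closes the argument: the marginal inherits the regularity of $\rho$. Indeed, since $\rho\in W^{1,1}(\R^3)$, one has $|\mu'(x_1)| \leq \int\!\!\int |\partial_1\rho|\,dx_2\,dx_3 \leq 2\mu(x_1)^{1/2}\bigl(\int\!\!\int|\partial_1\rho^{1/2}|^2\,dx_2\,dx_3\bigr)^{1/2}$ by the same Cauchy--Schwarz step you use in part (2), so $|(\mu^{1/2})'|^2 \leq \int\!\!\int|\partial_1\rho^{1/2}|^2\,dx_2\,dx_3 \in L^1(\R)$ and hence $\mu^{1/2}\in H^1(\R)$. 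The one-dimensional Sobolev embedding $H^1(\R)\hookrightarrow L^\infty(\R)$ then gives $\mu$ bounded, whence $\int\mu^3 \leq \|\mu\|_\infty^2\, N < +\infty$, and the phase $g(x_1)=(2\pi/N)\int_{-\infty}^{x_1}\mu$ yields orthonormal orbitals $\varphi_k = (\rho/N)^{1/2}e^{ikg}$ with finite kinetic energy. (Your appeal to $H^1(\R^3)\hookrightarrow L^6$, i.e.\ $\rho\in L^3(\R^3)$, does not help here, since $\mu$ is a marginal, not a restriction.) So your proposal for (1) is not merely incomplete --- the ``adaptive foliation'' you gesture at is unnecessary, and the difficulty you flag does not exist; without the marginal argument above, your proof of the forward direction does not close.
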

\begin{proof}
For the construction of a $\psi\in\mathcal{W}_N$ with $\rho_\psi = \rho$ for a given $\rho\in\mathcal{D}^N$, see \cite[Theorem 1.2]{Lieb1983}. For the converse, we must prove that $\rho_\psi^{1/2} \in H^{1}(\R^3)$.

  Let $\psi \in \mathcal{W}_N$, and let $\rho = \rho_\psi$.
    Differentiating, we obtain
    \[ \nabla \rho(\bfr) = N \sum_{s\in \Z_2} \int_{S^{N-1}} (\psi^* \nabla_1 \psi + \psi \nabla_1 \psi^*) \, d\bfx_2\cdots d\bfx_N \,. \]
    Application of Cauchy--Schwarz gives
    \begin{equation}\label{eq:nabla-rho-bound} |\nabla \rho(\bfr)|^2 \leq 8 \rho(\bfr) t(\bfr), \end{equation}
    where
        \[ t(\bfr) =  \frac{N}{2}\sum_{s\in \Z_2} \int_{S^{N-1}} |\nabla_1\psi|^2 \, d\bfx_2\cdots d\bfx_N \in L^1(\R^3) \]
    is the kinetic-energy density. Consequently, $\rho \in W^{1,1}(\R^3)$, the standard Sobolev space of $L^1(\R^3)$ functions with first-order weak derivatives in $L^1(\R^3)$.
    
    Let $G(\bfr) = \exp(-|\bfr|^2)$, and consider $\rho_\epsilon(\bfr) = \rho(\bfr) + \epsilon G(\bfr)$, with $\epsilon>0$ a small parameter.
    We first prove that $\rho^{1/2}_\epsilon\to \rho^{1/2}$ in $L^2(\R^3)$ as $\epsilon\to 0$. It is clear that $|\rho^{1/2}_\epsilon - \rho^{1/2}| \to 0$ pointwise. By the elementary inequality $(a-b)^2 \leq 2(a^2 + b^2)$ we have, for all $\epsilon\leq 1$,
    \[ |\rho^{1/2}_\epsilon - \rho^{1/2}|^2 \leq 2(\rho + \epsilon G + \rho) \leq 2G + 4\rho \in L^1(\R^3). \]
    By the dominated convergence theorem, the integral of the left-hand side converges to zero, and hence $\rho^{1/2}_\epsilon\to \rho^{1/2}$ in $L^2(\R^3)$ as claimed.
    
    We next prove that $\rho_\epsilon^{1/2}$ is bounded in $H^1(\R^3)$ for $\epsilon \leq 1$. Let $h(z) = (z + \epsilon G(\bfr) )^{1/2}$, where $z\in[0,+\infty)$ and $\bfr$ is in some compact subset of $\R^3$. The function $h$ is continuously differentiable with bounded derivative. By the chain rule for Sobolev functions which says that (1) the composition of a $C^1$ function with bounded derivative and a function in $W^{1,1}_\text{loc}(\R^3)$ is again in $W^{1,1}_\text{loc}(\R^3)$, and (2) that its partial derivatives can be computed with the usual chain rule~\cite[Ch.~5, Exerciswe 17]{Evans1998}, we obtain $(\rho + \epsilon G)^{1/2}\in W^{1,1}(\R^3)_\text{loc}$ with $\nabla (\rho + \epsilon G)^{1/2} = \tfrac{1}{2}(\rho + \epsilon G)^{-1/2}(\nabla \rho + \epsilon \nabla G)$. Key now, is that this expression is uniformly bounded in $L^2(\R^3)$. Indeed,
    \begin{equation}\label{eq:rhoH1-estimate} |\nabla\rho_\epsilon^{1/2}|^2 \leq \frac{|\nabla\rho|^2 + |\epsilon \nabla G|^2}{2 (\rho + \epsilon G)} \leq \frac{\rho t}{N(\rho + \epsilon G)} + \epsilon \frac{|\nabla G|^2}{2G} \leq \frac{t}{N} + \frac{|\nabla G|^2}{2G} \in L^1(\R^3). \end{equation}
    Thus, $\rho_\epsilon^{1/2}$ is bounded in $H^1(\R^3)$ as claimed. By weak sequential compactness of bounded sets in the Hilbert space $H^1(\R^3)$ there exists a sequence $\{\epsilon_k\}\subset[0,1]$ such that $\rho_{\epsilon_k}^{1/2} \to u$ weakly in $H^1(\R^3)$. Since $\rho_{\epsilon_k}^{1/2} \to \rho^{1/2}$ strongly in $L^2(\R^3)$, $u = \rho^{1/2}$, and thus $\rho^{1/2} \in H^1(\R^3)$.

    For convexity of $\mathcal{D}^N$, we first note that $\rho \in W^{1,1}(\R^3)$, since $\rho^{1/2}\in\mathcal{D}^N$ implies that $\rho = \phi^2$ for a unique $\phi\in H^1$, $\phi \geq 0$ almost everywhere, and thus $\|\nabla \rho\|_1 = \|2\phi\nabla \phi\|_1\leq 2 \|\phi\|_2\|\nabla\phi\|_2<+\infty$. Let $\lambda \in [0,1]$, $\rho_1,\rho_2\in \mathcal{D}^N$, and set $\rho = \lambda \rho_1 + (1-\lambda)\rho_2$. We need to check that $\rho^{1/2} \in H^1(\R^3)$, which is obtained by repeating the regularization argument carried out for $\rho=\rho_\psi \in W^{1,1}(\R^3)$. For that, we need a bound on $|\nabla\rho(\bfr)|^2$ in lieu of Eq.~\eqref{eq:nabla-rho-bound}. We begin with the inequality
    \[ |\nabla \rho|^2 \leq 4 \lambda^2\phi_1^2|\nabla\phi_1|^2 +  4 (1-\lambda)^2 \phi_2^2 |\nabla\phi_2|^2 + 8 \lambda(1-\lambda)\phi_1\phi_2\nabla\phi_1\cdot\nabla \phi_2\,, \]
    and apply Young's inequality, i.e., $2\phi_1\phi_2 \nabla\phi_2\cdot\nabla\phi_2 \leq \phi_1^2|\nabla\phi_1|^2 + \phi_2^2|\nabla\phi_2|^2$, and then rearrange to get
    \[  |\nabla \rho|^2 \leq 4 \rho (\lambda |\nabla \phi_1|^2 + (1-\lambda)|\nabla\phi_2|^2)\,, \]
    which is precisely what is needed.
\end{proof}
\begin{remark}
  The need for some regularization to an everywhere positive density comes from the fact that the commonly used formula $\nabla\rho^{1/2} = \tfrac{1}{2}\rho^{-1/2}\nabla\rho$ only makes sense if $\{\bfr\in\R^3\mid \rho(\bfr)=0\}$ has zero measure.  When this can be verified, such as for $\rho=\rho_\psi$ for everywhere strictly positive, the proof of the above result is somewhat simpler~\cite[Theorem 1.1]{Lieb1983}. 
\end{remark}
Throughout this section, we let $X := L^1(\R^3)\cap L^3(\R^3)$, a Banach space with norm $\|\rho\|_X := \max(\|\rho\|_1,\|\rho\|_3)$. We note that the dual is $X^* = L^{3/2}(\R^3) + L^\infty(\R^3)$ with norm $\|v\|_{X^*} = \inf\{ \|f\|_\infty + \|g\|_{3/2} \mid f+g=v \}$, also a Banach space~\cite{Liu1968}.
\begin{proposition}
    $\mathcal{D}^N \subset X$.
\end{proposition}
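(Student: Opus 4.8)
The plan is to unwind the definition of $X$: a function $\rho$ lies in $X=L^1(\R^3)\cap L^3(\R^3)$ precisely when $\rho\in L^1(\R^3)$ and $\rho\in L^3(\R^3)$, so I would verify these two memberships separately. The $L^1$ part is immediate from the definition of $\mathcal{D}^N$: any $\rho\in\mathcal{D}^N$ satisfies $\|\rho\|_1=N<+\infty$, so $\rho\in L^1(\R^3)$ with controlled norm.

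The only substantive point is $L^3$ membership, and here I would exploit the structure $\rho=\phi^2$ with $\phi:=\rho^{1/2}\in H^1(\R^3)$, $\phi\geq 0$ a.e., which is exactly the defining property of $\mathcal{D}^N$ (and is the same $\phi$ used in the convexity argument above). Then
\[
    \|\rho\|_3 = \|\phi^2\|_3 = \left(\int_{\R^3} |\phi|^{6}\,d^3\bfr\right)^{1/3} = \|\phi\|_6^2,
\]
so it suffices to show $\phi\in L^6(\R^3)$. This is precisely the Sobolev embedding theorem in three space dimensions: for $d=3$ the critical exponent is $2^\ast = 2d/(d-2) = 6$, and one has the continuous embedding $H^1(\R^3)\hookrightarrow L^6(\R^3)$, i.e.\ there is a universal constant $C>0$ with $\|\phi\|_6\leq C\|\phi\|_{H^1}$. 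Consequently $\|\rho\|_3\leq C^2\|\phi\|_{H^1}^2<+\infty$, so $\rho\in L^3(\R^3)$.

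There is essentially no obstacle here beyond invoking the Sobolev embedding; the proof is a one-line consequence once $\rho$ is written as $\phi^2$ with $\phi\in H^1$. The only care needed is to confirm that dimension three places us exactly at the endpoint exponent $6$, so that $\rho=\phi^2$ lands in $L^3$ rather than a weaker space. I would also remark that the estimate is quantitative, giving
\[
    \|\rho\|_X = \max\bigl(\|\rho\|_1,\|\rho\|_3\bigr) \leq \max\bigl(N,\, C^2\|\rho^{1/2}\|_{H^1}^2\bigr),
\]
which shows that the inclusion $\mathcal{D}^N\subset X$ maps $H^1$-bounded sets of square roots into bounded subsets of $X$ — a form of the estimate likely to be convenient when topologizing densities later in the chapter.
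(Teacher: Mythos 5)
Your proof is correct and follows exactly the paper's own argument: the $L^1$ membership is immediate from $\|\rho\|_1=N$, and the $L^3$ membership is the Sobolev embedding $H^1(\R^3)\hookrightarrow L^6(\R^3)$ applied to $\rho^{1/2}$, giving $\|\rho\|_3=\|\rho^{1/2}\|_6^2<+\infty$. The quantitative remark about mapping $H^1$-bounded sets of square roots into bounded subsets of $X$ is a harmless and potentially useful addition, but the core argument is identical.
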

\begin{proof}
    We need only show that $\rho \in L^3(\R^3)$. But $H^1(\R^3) \subset L^6(\R^3)$ by the Sobolev embedding theorem, i.e. $\rho \in \mathcal{D}^N$ implies $\|\rho^{1/2}\|_6 = \|\rho\|_3^{1/2} < +\infty$.
\end{proof}

\subsection{Constrained search and skew conjugate pairs}

Having a well-defined space of densities, we now introduce \emph{constrained search} into Eq.~\eqref{eq:Ev}, i.e.,
\begin{equation}
    E[v] = \inf_{\rho\in X} \left\{F_\text{LL}[\rho] + \int v \rho \right\},
    \label{eq:Ev-LL}
\end{equation}
where the Levy--Lieb functional is defined for any measurable $\rho$ by the expression
\begin{equation}
    \begin{split}
    F_\text{LL}[\rho] &:= \inf_{\psi\mapsto\rho} \left\{ \mathcal{E}_0[\psi] \right\} \\ & = \inf\left\{ \mathcal{E}_0[\psi] \; \mid \; \psi \in \mathcal{W}_N, \; \rho_\psi = \rho \; \text{a.e.} \right\}.
    \end{split}
\end{equation}
We use the convetion that $F_\text{LL}[\rho] = +\infty$ whenever $\rho \notin \mathcal{D}^N$.

We now have that $E : X^* \to \R$ is given as a pointwise infimum over a nonempty family of continuous affine functions. Such functions are automatically concave and upper semicontinuous.

For any pair $(v,\rho)\in X^*\times X$ we have the inequality $E[v] \leq F_\text{LL}[\rho] + \int v \rho$. Rearranging, we can define a new function $F : X \to \R \cup \{+\infty\}$ by the expression
\begin{subequations}
\begin{equation}
  F[\rho] := \sup_{v\in X^*} \left\{ E[v] - \int v \rho \right\}, \label{eq:E-to-F}
\end{equation}
which is then automatically \emph{convex} and \emph{lower} semicontinuous,  satisfying $F[\rho] \leq F_\text{LL}[\rho]$. As we will show, one also has
\begin{equation}
  E[v] = \inf_{\rho\in X} \left\{ F[\rho] + \int v \rho \right\}.
  \label{eq:F-to-E}
\end{equation}
\end{subequations}
For an extended-valued map such as $F$, we define the \emph{effective domain} $\dom(F)$ to be the points where $F$ is real valued.
\begin{definition}[Skew conjugation]
    Let $B$ be a Banach space with dual $B^*$, consisting of the continuous linear functionals on $B$. Denote by $\braket{\cdot,\cdot}$ the dual pairing. For any $f : B \to \R\cup\{+\infty\}$ not identically $+\infty$ ($f$ is then called proper) we define the (skew) concave conjugate functional $f^\wedge : B^* \to \R\cup\{-\infty\}$ by
    \begin{equation}
      f^\wedge[x^*] := \inf_{x\in B} \left\{ f[x] + \braket{x^*,x} \right\}.
    \end{equation}
    For any $g : B^* \to \R\cup\{-\infty\}$, we define the (skew) convex conjugate functional $g^\vee : B \to \R\cup\{+\infty\}$ by
    \begin{equation}
      g^\vee[x] := \sup_{x^*\in B^*} \left\{ g[x^*] - \braket{x^*,x} \right\}. 
    \end{equation}
    A pair of functionals $(f,g)$ satisfying $f = g^\vee$ and $g = f^\wedge$ are said to be a \emph{skew-conjugate pair of functionals}.
\end{definition}

  The above introduced concept of (skew) convex/concave conjugate functionals and its notation is slightly uconventional~\cite{Rockafellar1968}, but useful in DFT. The \emph{conventional} convex conjugate (Legendre--Fenchel transform) of $f : B \to \R\cup\{+\infty\}$ is the function $f^* : B^* \to \R\cup\{\pm\infty\}$ given by
  \begin{equation} f^*[x^*] := \sup_{x\in B} \left\{ \braket{x^*,x} - f[x] \right\}. \label{eq:conjugate} \end{equation}

We introduce the class $\Gamma_0(B)$ of proper convex lower semicontinuous functions $f : B \to \R\cup\{+\infty\}$, meaningful for the Banach space $B$ but also for general topological vector spaces such as $B_w^*$, the dual space $B^*$ equipped with the weak-$*$ topology. A central fact is that the Legendre--Fenchel transformation is a bijection between these two classes of functions. This result gives insight into the role of convex conjugation in DFT.
\begin{theorem}\label{thm:legendre-fenchel}
  The Legendre--Fenchel transform $f \mapsto f^*$ is a bijection between $\Gamma_0(B)$ and $\Gamma_0(B^*_w)$, and $(f^*)^* = f$, as well as $(g^*)^* = g$ for any $g \in \Gamma_0(B_w^*)$.
\end{theorem}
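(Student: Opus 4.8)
The plan is to prove the Fenchel–Moreau theorem in the form stated, namely that Legendre–Fenchel conjugation is an involutive bijection between $\Gamma_0(B)$ and $\Gamma_0(B^*_w)$. The backbone of the argument is the Hahn–Banach separation theorem applied to the epigraph, so I would organize the proof around establishing the biconjugation identity $(f^*)^* = f$; once this is in hand, bijectivity follows formally.

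First I would record the easy structural facts. For any proper $f$, the conjugate $f^*$ is, by definition \eqref{eq:conjugate}, a pointwise supremum of the weak-$*$ continuous affine functions $x^* \mapsto \braket{x^*,x} - f[x]$, hence $f^*$ is convex and lower semicontinuous with respect to the weak-$*$ topology, i.e.\ $f^* \in \Gamma_0(B^*_w)$ provided it is proper (properness requires $f$ to have a continuous affine minorant, which is where convexity and lower semicontinuity of $f$ enter). The same observation applied to $g \in \Gamma_0(B^*_w)$ shows $g^* \in \Gamma_0(B)$, using that $B$ is the dual of $B^*_w$ under the canonical pairing so that the biduality $B = (B^*_w)^*$ is \emph{exact} without any reflexivity hypothesis. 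I would also note the Fenchel–Young inequality $f[x] + f^*[x^*] \geq \braket{x^*,x}$, which is immediate from the definition and gives the ``easy'' direction $(f^*)^* \leq f$ at once.

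The heart of the matter is the reverse inequality $(f^*)^*[x] \geq f[x]$ for every $x$. The standard approach is geometric: fix $x_0 \in B$ and any real $\alpha < f[x_0]$, so that the point $(x_0,\alpha)$ lies strictly below the epigraph of $f$. Since $f \in \Gamma_0(B)$, the epigraph is a nonempty closed convex subset of $B \times \R$, and by the Hahn–Banach separation theorem there is a closed hyperplane strictly separating $(x_0,\alpha)$ from $\operatorname{epi} f$, described by some $(x^*,s) \in B^* \times \R$. The crucial step is to argue that the separating functional can be chosen with $s > 0$ (the non-vertical case), which lets me normalize and read off a continuous affine minorant of $f$ of the form $x \mapsto \braket{x^*,x} + c$ lying strictly above $\alpha$ at $x_0$; this minorant translates directly into the bound $(f^*)^*[x_0] > \alpha$, and letting $\alpha \uparrow f[x_0]$ yields the claim. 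I expect this non-vertical normalization to be the main obstacle: if the separating hyperplane happens to be vertical (i.e.\ $s=0$), one must first secure the existence of at least one continuous affine minorant of $f$ and then perturb it by a small multiple of the vertical functional to tilt the hyperplane into non-vertical position, a maneuver that requires $f$ to be proper and that is cleanest to carry out separately on $\dom(f)$ versus points outside it.

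Finally I would assemble bijectivity. The biconjugation identity, proved in both directions, shows conjugation $\Gamma_0(B) \to \Gamma_0(B^*_w)$ is injective (it has a left inverse) and surjective (every $g \in \Gamma_0(B^*_w)$ equals $(g^*)^*{}^*=\,$hmm, more precisely $g = (g^*)^*$ exhibits $g$ as the conjugate of $g^* \in \Gamma_0(B)$), with the same argument run on the dual side giving $(g^*)^* = g$. The only asymmetry to watch is the topology: on the $B$ side lower semicontinuity is with respect to the norm topology, whereas on the $B^*$ side it is with respect to the weak-$*$ topology, and one must confirm the separation argument on $B^*_w$ uses precisely the weak-$*$ continuous functionals, whose dual is exactly $B$. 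Granting that the separation theorem is available in both settings, the two biconjugation identities close the loop and establish the stated bijection.
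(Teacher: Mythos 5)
Your proposal is correct in substance but takes a genuinely different route from the paper. The paper does not prove biconjugation from scratch: it reduces the theorem to cited results --- properness of the conjugate and the Fenchel--Moreau biconjugation theorem for locally convex spaces (Barbu--Precupanu) --- and spends its effort on the topological bookkeeping that makes this reduction legitimate: (i) $\Gamma_0(B)=\Gamma_0(B_w)$, because convex sublevel sets are norm-closed if and only if they are weakly closed, and (ii) the duality $(B_w)^*=B^*_w$ and $(B_w^*)^*=B_w$, which holds with no reflexivity hypothesis and lets the locally convex biconjugation theorem be applied symmetrically on both sides. You instead prove the analytic core yourself, via Hahn--Banach separation of the epigraph, deriving bijectivity formally from the two identities $(f^*)^*=f$ and $(g^*)^*=g$ exactly as the paper does. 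What your approach buys is a self-contained argument that exposes precisely where properness and lower semicontinuity enter (existence of a continuous affine minorant, non-vertical normalization); what it costs is that the separation argument must be run twice, and on the dual side it must be carried out in the locally convex space $B^*_w$, whose continuous linear functionals are exactly the evaluations at points of $B$ --- a point you correctly flag, and which is the same duality fact (ii) on which the paper's shorter proof leans. So the two proofs share the topological skeleton and differ in whether the convex-analytic engine is proved or quoted.

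One detail in your vertical-hyperplane maneuver is stated backwards. If $V$ denotes the vertical separating functional (so $V\leq 0$ on $\dom(f)$ and $V[x_0]>0$) and $\ell_0$ is a continuous affine minorant of $f$, then $\ell_0+\epsilon V$ with $\epsilon$ \emph{small} is indeed a non-vertical continuous affine minorant of $f$, but its value at $x_0$ need not exceed $\alpha$. You need $\ell_0+\lambda V$ with $\lambda$ \emph{large} (equivalently: perturb the vertical separating functional by a small multiple of the non-vertical one coming from $\ell_0$, then renormalize). This is a one-line fix and does not affect the structure of your argument.
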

\begin{proof}
  We mention two facts. First, $f\in \Gamma_0(B)$ if and only if $f \in \Gamma_0(B_w)$, where $B_w$ is $B$ equipped with the weak topology. This follows from $f$ being lower semicontinuous if and only if $f$ has closed convex sublevel sets. By \cite[Proposition 1.73]{Barbu2012}, norm-closed convex sets are weakly closed and vice versa. 
  Second, a fact from functional analysis~\cite[Theorem IV.20]{ReedSimon} is that the dual of $B_w$ is $B_w^*$, and that $(B_w^*)^* = B_w$.
  
  From \cite[Corrolary 2.21]{Barbu2012}, if $V$ is a locally convex topological vector space, $f : V \to \R\cup\{+\infty\}$ is proper if and only if $f^* : V^* \to \R\cup\{+\infty\}$ is proper. Since $f^*$ is the supremum of a family of lower semicontinuous functions, $f\in\Gamma_0(B_w)$ implies $f^* \in \Gamma_0(B^*_w)$. By reflexivity the same argument gives that $g \in \Gamma_0(B_w^*)$ implies $g^* \in \Gamma_0(B_w)$. The biconjugation theorem (Fenchel--Moreau) for locally convex spaces~\cite[Theorem 2.22]{Barbu2012} can for reflexive locally convex topological spaces be phrased as $f \in \Gamma_0(V)$ implies $(f^*)^* = f$. Hence, the conjugation is a bijection between $\Gamma_0(B_w)$ and $\Gamma_0(B_w^*)$ and vice versa. Together with $\Gamma_0(B) = \Gamma_0(B_w)$, the bijection of $\Gamma_0(B)$ and $\Gamma_0(B_w^*)$ has been established.
\end{proof}
Theorem~\ref{thm:legendre-fenchel} can be reformulated in terms of skew conjugation.
In the following, we write $f \in -\Gamma_0(B)$ if $-f \in \Gamma_0(B)$, and so on.
\begin{proposition}\label{prop:legendre-fenchel2}
  Let $B$ be a Banach space.
  \begin{enumerate}
    \item\label{prop:legendre-fenchel2:1} Let $f \in \Gamma_0(B)$. Then $f^\wedge \in -\Gamma_0(B^*_w)$, and $f^\wedge[y] = -f^*[-y]$.
    \item\label{prop:legendre-fenchel2:2} Let $g \in -\Gamma_0(B^*_w)$. Then $g^\vee \in \Gamma_0(B)$ and $g^\vee[x] = (-g)^*[-x]$.
    \item\label{prop:legendre-fenchel2:3} The map $f \mapsto f^\wedge$ is a bijection between $\Gamma_0(B)$ and $-\Gamma_0(B^*_w)$, and the map $g \mapsto g^\vee$ is a bijection between $-\Gamma_0(B^*_w)$ and $\Gamma_0(B)$.
  \end{enumerate}
\end{proposition}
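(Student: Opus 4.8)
The plan is to reduce each claim to the ordinary Legendre--Fenchel transform of Theorem~\ref{thm:legendre-fenchel}, absorbing the sign flips that distinguish skew conjugation from the conventional transform. Only two structural facts beyond that theorem are needed, and I would isolate them first. The first is that negation, $\nu(x)=-x$, is an involutive linear homeomorphism of $B$ and of $B^*_w$ (it is weak-$*$ continuous since each $x^*\mapsto\braket{x^*,x}$ is); being a convexity- and topology-preserving bijection, it preserves membership in $\Gamma_0$. The second, already recorded in the proof of Theorem~\ref{thm:legendre-fenchel}, is that $\Gamma_0(B)=\Gamma_0(B_w)$ and $(B^*_w)^*=B_w$, so that the conjugate of a function on $B^*_w$ lives on $B$ and biconjugation applies in both directions.

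For~\ref{prop:legendre-fenchel2:1} I would first establish the pointwise identity by rewriting the infimum as a negated supremum,
\[
  f^\wedge[y] = \inf_{x\in B}\bigl\{ f[x] + \braket{y,x}\bigr\} = -\sup_{x\in B}\bigl\{ \braket{-y,x} - f[x]\bigr\} = -f^*[-y],
\]
so that $-f^\wedge = f^*\circ\nu$ on $B^*$. Since $f\in\Gamma_0(B)$ gives $f^*\in\Gamma_0(B^*_w)$ by Theorem~\ref{thm:legendre-fenchel}, and precomposition with the homeomorphism $\nu$ of $B^*_w$ preserves $\Gamma_0(B^*_w)$, I conclude $-f^\wedge\in\Gamma_0(B^*_w)$, that is $f^\wedge\in-\Gamma_0(B^*_w)$.

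Claim~\ref{prop:legendre-fenchel2:2} is the mirror image, now exploiting $(B^*_w)^*=B_w$. Writing $h:=-g\in\Gamma_0(B^*_w)$ and recalling that its conventional conjugate $h^*$ is defined on $B$, I would compute
\[
  g^\vee[x] = \sup_{x^*\in B^*}\bigl\{ g[x^*] - \braket{x^*,x}\bigr\} = \sup_{x^*\in B^*}\bigl\{ \braket{x^*,-x} - h[x^*]\bigr\} = h^*[-x] = (-g)^*[-x].
\]
Theorem~\ref{thm:legendre-fenchel} gives $h^*\in\Gamma_0(B_w)=\Gamma_0(B)$, and composing with negation on $B$ yields $g^\vee\in\Gamma_0(B)$.

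Finally, for~\ref{prop:legendre-fenchel2:3} it suffices to verify that $f\mapsto f^\wedge$ and $g\mapsto g^\vee$ are mutually inverse, since combined with the previous two parts this delivers the bijections. I would chain the two identities and invoke biconjugation: for $f\in\Gamma_0(B)$, using $(-f^\wedge)[y]=f^*[-y]$ from~\ref{prop:legendre-fenchel2:1}, the formula of~\ref{prop:legendre-fenchel2:2}, and the substitution $z=-y$,
\[
  (f^\wedge)^\vee[x] = (-f^\wedge)^*[-x] = \sup_{y\in B^*}\bigl\{\braket{y,-x} - f^*[-y]\bigr\} = \sup_{z\in B^*}\bigl\{\braket{z,x} - f^*[z]\bigr\} = (f^*)^*[x] = f[x],
\]
the last step by Theorem~\ref{thm:legendre-fenchel}; the identity $(g^\vee)^\wedge=g$ follows symmetrically. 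I expect no analytic obstacle here: all the topological content is inherited from Theorem~\ref{thm:legendre-fenchel}, and the only thing requiring care is the sign bookkeeping—tracking which negation acts on $B$ versus on $B^*_w$ and carrying out each change of variable consistently with the orientation of the dual pairing.
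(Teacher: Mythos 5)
Your proposal is correct and takes essentially the same route as the paper: the pointwise identities $f^\wedge[y]=-f^*[-y]$ and $g^\vee[x]=(-g)^*[-x]$ are obtained by the same sign-flipping computations, and part~(iii) is reduced to Theorem~\ref{thm:legendre-fenchel}. You simply make explicit two details the paper leaves implicit --- that precomposition with negation preserves $\Gamma_0$, and the biconjugation chain verifying that $f\mapsto f^\wedge$ and $g\mapsto g^\vee$ are mutually inverse --- which is a faithful elaboration rather than a different argument.
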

\begin{proof} Proof of \ref{prop:legendre-fenchel2:1}:
    \begin{equation*}
      \begin{split}
        f^\wedge[y] &= \inf_{x\in B} \left\{ f[x] + \braket{y,x} \right\} = \inf_{x \in B} \left\{ -(-f[x]) - \braket{-y,x} \right\} \\
        & = - \sup_{x \in B} \{ \braket{-y,x} - f[x] \} = - f^*[-y] \,.
      \end{split}
    \end{equation*}
      Proof of \ref{prop:legendre-fenchel2:2}:
    \begin{equation*}
      \begin{split}
        g^\vee[x] &= \sup_{y\in B^*} \left\{ g[y] - \braket{y,x} \right\} = \sup_{y \in B^*} \left\{ \braket{y,-x} - (-g[y]) \right\} \\
        & =  (-g)^*[-x]\,.
      \end{split}
    \end{equation*}
      Proof of \ref{prop:legendre-fenchel2:3}: Follows from \ref{prop:legendre-fenchel2:1}, \ref{prop:legendre-fenchel2:2}, and Theorem~\ref{thm:legendre-fenchel}.
\end{proof}

It is readily seen, that $F = E^\vee = (F^\wedge)^\vee = (F_\text{LL}^\wedge)^\vee$, and thus that $F = (F_\text{LL}^\wedge)^\vee$.  $F$ is distinct from $F_\text{LL}$, as the latter function is not convex. We have the following characterization:
\begin{theorem}\label{thm:envelope}
  For any $f : B \to \R\cup\{+\infty\}$, $f^{**} = (f^\wedge)^\vee \in \Gamma_0(B)$ is the largest convex lower semicontinuous minorant of $f$, often called the convex envelope or closed convex hull of $f$.
\end{theorem}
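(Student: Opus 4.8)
The plan is to establish the two defining properties of a convex envelope: that $f^{**}$ is a convex lower semicontinuous minorant of $f$, and that it is the \emph{largest} such minorant. By Theorem~\ref{thm:legendre-fenchel}, the map $g \mapsto g^*$ carries $\Gamma_0(B)$ to $\Gamma_0(B^*_w)$ and back, so $f^{**} \in \Gamma_0(B)$ automatically whenever $f^*$ is proper; hence convexity and lower semicontinuity of $f^{**}$ come for free, and I would only need to address the degenerate cases where $f$ is not proper or $f^*$ fails to be proper. The identity $f^{**} = (f^\wedge)^\vee$ is a direct consequence of Proposition~\ref{prop:legendre-fenchel2}, reading off $f^\wedge[y] = -f^*[-y]$ and then applying the $(\cdot)^\vee$ definition, so this part is bookkeeping with the sign conventions.

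First I would show $f^{**} \leq f$ pointwise. This is the elementary Fenchel--Young inequality: from the definition \eqref{eq:conjugate}, $f^*[x^*] \geq \braket{x^*,x} - f[x]$ for every $x$ and $x^*$, which rearranges to $f[x] \geq \braket{x^*,x} - f^*[x^*]$; taking the supremum over $x^* \in B^*$ gives $f[x] \geq f^{**}[x]$. So $f^{**}$ is a minorant, and by the previous paragraph it lies in $\Gamma_0(B)$.

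Next I would prove maximality: if $h \in \Gamma_0(B)$ is any convex lower semicontinuous minorant of $f$, then $h \leq f^{**}$. The key observation is that conjugation reverses order, so $h \leq f$ implies $h^* \geq f^*$, and applying conjugation once more (it reverses order again) gives $h^{**} \leq f^{**}$. Since $h \in \Gamma_0(B)$, the biconjugation theorem invoked in Theorem~\ref{thm:legendre-fenchel} yields $h^{**} = h$, whence $h \leq f^{**}$, as desired. This shows $f^{**}$ dominates every element of $\Gamma_0(B)$ that lies below $f$, and combined with the fact that $f^{**}$ itself is such an element, it is the largest one.

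The main obstacle is not the order-reversal machinery but the handling of improper functions, since Theorem~\ref{thm:legendre-fenchel} is stated only for $\Gamma_0$, i.e. for \emph{proper} functions. If $f$ takes the value $+\infty$ everywhere, or if no affine minorant exists so that $f^{**} \equiv -\infty$, the statement must be interpreted with the usual conventions, and I would dispatch these cases separately by noting that $f$ admits a continuous affine minorant precisely when $f^*$ is proper, in which case $f^{**}$ is the pointwise supremum of all continuous affine minorants of $f$ — an equivalent and perhaps more transparent description of the closed convex hull. The one genuinely delicate point is verifying that the supremum of affine minorants coincides with $f^{**}$ rather than merely bounding it, which again reduces to Fenchel--Young together with the biconjugation theorem already granted in the excerpt.
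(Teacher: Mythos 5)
Your proposal is correct, but it takes a different route from the paper in a trivial sense: the paper gives no argument at all, simply citing Theorem 6.15 of van Tiel's monograph, whereas you supply a self-contained proof. Your argument is the standard one and it holds up: Fenchel--Young gives $f^{**}\leq f$; order reversal of conjugation applied twice gives $h^{**}\leq f^{**}$ for any minorant $h\leq f$; and the biconjugation theorem (already available in the paper through Theorem~\ref{thm:legendre-fenchel}) upgrades this to $h = h^{**}\leq f^{**}$ for $h\in\Gamma_0(B)$, while the sign bookkeeping $(f^\wedge)^\vee[x]=\sup_{y}\{-f^*[-y]-\braket{y,x}\}=f^{**}[x]$ follows from Proposition~\ref{prop:legendre-fenchel2} exactly as you say. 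What your approach buys is self-containedness and reuse of machinery the paper has already built; what the citation buys is brevity and coverage of the edge cases without comment. On those edge cases, your caveat is in fact a genuine improvement on the statement as printed: the claim \enquote{for any $f$, $f^{**}\in\Gamma_0(B)$} is literally false without qualification (take $f[x]=-\|x\|^2$, for which $f^*\equiv+\infty$ and $f^{**}\equiv-\infty$, which is still the largest convex lower semicontinuous minorant but is not proper), so your observation that properness of $f^{**}$ requires $f$ proper with a continuous affine minorant, equivalently $f^*$ proper, is precisely the hypothesis under which the theorem holds as stated. One small remark: your fallback description of $f^{**}$ as the pointwise supremum of all continuous affine minorants of $f$ is correct and is itself an immediate consequence of your maximality step applied to affine $h$, so the \enquote{genuinely delicate point} you flag at the end dissolves once you note that every $h\in\Gamma_0(B)$ is itself a supremum of continuous affine functions, again by biconjugation.
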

\begin{proof}
  See Theorem 6.15 in Ref.~\cite{VanTiel1984}.
\end{proof}
A central result in Lieb's analysis (with a proof attributed to Simon), is the following:
\begin{theorem}
    $F = F_\text{DM}$, the density-matrix constrained-search functional defined by
    \begin{equation}
        F_\text{DM}[\rho] := \inf_{\gamma\mapsto \rho} \left\{ \operatorname{Tr}(\hat{H}[0] \gamma) \right\},
    \end{equation}
    where the infimum extends over all trace-class operators $\mathcal{D}^N_\text{op}$ on the form $\gamma = \sum_k^\infty \lambda_k \ket{\psi_k}\bra{\psi_k}$, with $0\leq \lambda_k \leq 0$, $\sum_k \lambda_k = 1$, and $\{\psi_k\} \subset \mathcal{W}_N$ an $L^2$-orthonormal sequence. The notation $\gamma\mapsto\rho$ means that $\sum_k \lambda_k \rho_{\psi_k} = \rho$ almost everywhere. The effective domain of $F_\text{DM}$ is $\mathcal{D}^N$.
\end{theorem}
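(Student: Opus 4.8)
The plan is to show that $F_\text{DM}$ belongs to $\Gamma_0(X)$ and is skew-conjugate to $E$; the biconjugation bijection of Theorem~\ref{thm:legendre-fenchel} and Proposition~\ref{prop:legendre-fenchel2} then forces $F_\text{DM} = E^\vee = F$. I would therefore establish, in turn, that $F_\text{DM}$ is proper and convex, that $F_\text{DM}^\wedge = E$, and finally that $F_\text{DM}$ is lower semicontinuous, the last being the crux.

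Properness and the identification of the effective domain come directly from Theorem~\ref{thm:N-rep}: for $\rho\in\mathcal{D}^N$ there is $\psi\in\mathcal{W}_N$ with $\psi\mapsto\rho$, so $\gamma=\ket{\psi}\bra{\psi}$ is admissible with $\Tr(\hat H[0]\gamma)=\mathcal{E}_0[\psi]<+\infty$, while any admissible $\gamma$ produces a density in $\mathcal{D}^N$ by the convexity of $\mathcal{D}^N$. Since the repulsion $W$ is nonnegative, $\Tr(\hat H[0]\gamma)\ge 0$, so $F_\text{DM}$ is bounded below and finite exactly on $\mathcal{D}^N$. Convexity is then immediate from linearity of the trace: if $\gamma_i\mapsto\rho_i$, then $\lambda\gamma_1+(1-\lambda)\gamma_2\mapsto\lambda\rho_1+(1-\lambda)\rho_2$ is admissible and the energy is affine along this combination, so the infimum defining $F_\text{DM}$ is convex.

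For the conjugacy, collapsing the nested infima over $\rho$ and over $\gamma\mapsto\rho$ into a single infimum over admissible $\gamma$ gives
\begin{align*}
  F_\text{DM}^\wedge[v] &= \inf_{\rho\in X}\,\inf_{\gamma\mapsto\rho}\left\{\Tr(\hat H[0]\gamma)+\braket{v,\rho}\right\} \\
  &= \inf_{\gamma}\left\{\Tr(\hat H[0]\gamma)+\Tr(V\gamma)\right\} = \inf_\gamma\Tr(\hat H[v]\gamma),
\end{align*}
where I used $\braket{v,\rho_\gamma}=\Tr(V\gamma)$. Writing $\gamma=\sum_k\lambda_k\ket{\psi_k}\bra{\psi_k}$, the bound $\Tr(\hat H[v]\gamma)=\sum_k\lambda_k(\mathcal{E}_0[\psi_k]+\mathcal{V}[\psi_k])\ge E[v]$ holds because each $\mathcal{E}_0[\psi_k]+\mathcal{V}[\psi_k]\ge E[v]$ and $\sum_k\lambda_k=1$; rank-one $\gamma$ shows the bound is sharp, so the mixed-state infimum equals $E[v]$ and $F_\text{DM}^\wedge=E$. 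Applying $\vee$ and Theorem~\ref{thm:envelope} already delivers $F=E^\vee=(F_\text{DM}^\wedge)^\vee$, the convex envelope of $F_\text{DM}$; in particular $F\le F_\text{DM}$.

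The reverse inequality $F_\text{DM}\le F$ holds precisely when $F_\text{DM}$ coincides with its convex envelope, i.e. when $F_\text{DM}$ is lower semicontinuous, and this is where I expect the real work. Given $\rho_n\to\rho$ in $X$ with $\liminf_n F_\text{DM}[\rho_n]=:L<+\infty$, I would pick near-minimizers $\gamma_n\mapsto\rho_n$; since $W\ge 0$, the energy bound controls the kinetic energies $\Tr(T\gamma_n)$ uniformly, and together with $\Tr\gamma_n=1$ this yields enough compactness to pass to a weak-$*$ limit $\gamma_n\rightharpoonup\gamma$ in the trace class. One must then show that $\gamma$ is again admissible with $\gamma\mapsto\rho$ and that $\Tr(\hat H[0]\gamma)\le L$ by weak lower semicontinuity of the energy form. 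The two delicate points, and the main obstacle, are ruling out escape of mass to infinity on the unbounded domain $\R^3$ — here the strong $L^1$-convergence $\rho_n\to\rho$ with $\|\rho\|_1=N$ pins the normalization of the limit — and controlling the density map together with the interaction term under only weak convergence of the $\gamma_n$. Once lower semicontinuity is secured, $F_\text{DM}\in\Gamma_0(X)$, and Proposition~\ref{prop:legendre-fenchel2} gives $(F_\text{DM}^\wedge)^\vee=F_\text{DM}$, so that $F_\text{DM}=E^\vee=F$.
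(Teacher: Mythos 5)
Your strategy coincides exactly with the paper's: convexity of $F_\text{DM}$, the identity $F_\text{DM}^\wedge = E$, and Theorem~\ref{thm:envelope} reduce the whole theorem to lower semicontinuity of $F_\text{DM}$, and the parts you prove in full (properness, convexity, the collapse of the nested infima giving $F_\text{DM}^\wedge = E$, and the final envelope argument) are correct. One small repair: to identify $\dom(F_\text{DM}) = \mathcal{D}^N$ you invoke convexity of $\mathcal{D}^N$, but an admissible $\gamma$ is a \emph{countable} convex combination, so plain convexity of $\mathcal{D}^N$ does not apply. The clean argument is that finiteness of $\Tr(\hat{H}[0]\gamma)$ gives finite kinetic energy of the mixed state, and the Cauchy--Schwarz bound~\eqref{eq:nabla-rho-bound} from the proof of Theorem~\ref{thm:N-rep} holds verbatim for mixed-state densities, yielding $\rho_\gamma^{1/2} \in H^1(\R^3)$.

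The genuine gap is precisely where you expected it, and it is worth knowing that the paper does not close it either: the paper's entire proof is a citation of Theorem 4.4 of Lieb~\cite{Lieb1983} (proof attributed to Simon) for lower semicontinuity, plus convexity, plus Theorem~\ref{thm:envelope}. Your compactness sketch names the right obstacles but does not overcome them, and they are serious. In the trace class viewed as the dual of the compact operators, weak-$*$ limits need not preserve the trace, and the density map $\gamma \mapsto \rho_\gamma$ is not weak-$*$ continuous even on sets of bounded kinetic energy: for $N=2$ take $\psi_n$ built from $\phi \otimes \chi(\cdot - n\bfe)$ with one factor translating to infinity; then $\gamma_n = \ket{\psi_n}\bra{\psi_n}$ converges weak-$*$ to $0$ with bounded kinetic energy, so the weak-$*$ limit is neither admissible nor carries any density. (This is not a counterexample to lower semicontinuity itself, since there $\rho_{\gamma_n}$ does not converge in $L^1$, but it shows your step ``pass to a weak-$*$ limit $\gamma$, then show $\gamma$ is admissible with $\gamma \mapsto \rho$'' cannot work from the kinetic-energy bound alone.) The strong $L^1$ convergence $\rho_n \to \rho$ with $\|\rho\|_1 = N$ must enter quantitatively: bounded kinetic energy plus Rellich--Kondrachov give strong local convergence of the densities, identifying $\rho_\gamma = \rho$ almost everywhere, whereupon $\int \rho = N$ forces $\Tr\gamma = 1$ and rules out escape of mass; weak-$*$ lower semicontinuity of $\gamma \mapsto \Tr(\hat{H}[0]\gamma)$ for the nonnegative operator $\hat{H}[0]$ then concludes. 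Carrying this out in detail essentially reproduces Lieb's Theorem 4.4; the honest alternatives are to do that work or, as the paper does, to cite it.
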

\begin{proof}
    See Theorem 4.4 in Ref.~\cite{Lieb1983}, in which it is proven that $F_\text{DM}$ is lower semicontinuous. Since it is also convex, we must have $F=F_\text{DM}$ by Theorem~\ref{thm:envelope}.
\end{proof}
The ground-state problem can be written as a minimization over the set $\mathcal{D}^N_\text{op}$ of density operators as
\begin{equation}
  E_\text{DM}[v] := \inf_{\gamma \in \mathcal{D}^N_\text{op}} \Tr(\hat{H}[v]\gamma) = \sum_k \lambda_k \left(\mathcal{E}_0[\psi_k] + \mathcal{V}[\psi_k]\right).
  \label{eq:E-DM}
\end{equation}
\begin{proposition}
  Equation~\eqref{eq:E-DM} and \eqref{eq:Ev} define the same function,  $E[v] = E_\text{DM}[v]$ for all $v\in X^*$.
\end{proposition}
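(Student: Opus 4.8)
The plan is to prove the two inequalities $E_{\text{DM}}[v] \le E[v]$ and $E[v] \le E_{\text{DM}}[v]$ separately. The conceptual point behind both is that $\gamma \mapsto \Tr(\hat{H}[v]\gamma)$ is affine in $\gamma$, while $\mathcal{D}^N_{\text{op}}$ is a convex set whose extreme points are precisely the rank-one projectors onto elements of $\mathcal{W}_N$; minimizing an affine functional over such a set is controlled by its pure states. Rather than invoke an extreme-point theorem, I would carry out the elementary two-sided estimate, which is self-contained.

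For the inequality $E_{\text{DM}}[v] \le E[v]$, I would note that every $\psi \in \mathcal{W}_N$ furnishes an admissible density operator, namely the rank-one projector $\gamma = \ket{\psi}\bra{\psi}$, corresponding to the trivial convex combination with a single weight $\lambda_1 = 1$. For this $\gamma$, the expansion in Eq.~\eqref{eq:E-DM} reduces to $\Tr(\hat{H}[v]\gamma) = \mathcal{E}_0[\psi] + \mathcal{V}[\psi]$. Thus the infimum defining $E_{\text{DM}}[v]$ is taken over a set of values that already contains every pure-state energy appearing in Eq.~\eqref{eq:Ev}, and hence $E_{\text{DM}}[v] \le E[v]$.

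For the reverse inequality, I would fix an arbitrary $\gamma = \sum_k \lambda_k \ket{\psi_k}\bra{\psi_k} \in \mathcal{D}^N_{\text{op}}$ and use the trace expansion of Eq.~\eqref{eq:E-DM}. Since each $\psi_k \in \mathcal{W}_N$, the definition of $E[v]$ in Eq.~\eqref{eq:Ev} gives $\mathcal{E}_0[\psi_k] + \mathcal{V}[\psi_k] \ge E[v]$ for every $k$. Multiplying by $\lambda_k \ge 0$, summing, and using $\sum_k \lambda_k = 1$ yields
\begin{equation*}
  \Tr(\hat{H}[v]\gamma) = \sum_k \lambda_k \left( \mathcal{E}_0[\psi_k] + \mathcal{V}[\psi_k] \right) \ge \left( \sum_k \lambda_k \right) E[v] = E[v].
\end{equation*}
Taking the infimum over $\gamma \in \mathcal{D}^N_{\text{op}}$ then gives $E_{\text{DM}}[v] \ge E[v]$, and combining the two inequalities completes the proof.

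The only genuinely delicate point, which I would settle before running the convexity estimate, is the well-definedness of the termwise trace expansion and of the resulting (possibly infinite) series. Because $v \in X^* = L^{3/2}(\R^3) + L^\infty(\R^3)$, the KLMN theorem ensures that the form $\mathcal{E}_0 + \mathcal{V}$ is bounded below on $H^1_N$; consequently $E[v] > -\infty$ and each summand $\mathcal{E}_0[\psi_k] + \mathcal{V}[\psi_k]$ is bounded below by the same constant. This guarantees that the sum of the nonnegatively weighted terms is unambiguously defined in $(-\infty,+\infty]$, so no cancellation issues arise and the comparison with $E[v]$ term by term is legitimate. The trace identity itself is the content of Eq.~\eqref{eq:E-DM}, which I take as given.
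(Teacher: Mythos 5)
Your proof is correct and follows essentially the same route as the paper's: the inequality $E_{\text{DM}}[v]\leq E[v]$ via rank-one projectors $\ket{\psi}\bra{\psi}$, and the reverse inequality by bounding each term $\mathcal{E}_0[\psi_k]+\mathcal{V}[\psi_k]\geq E[v]$ in the convex combination. Your closing remark on the well-definedness of the (possibly infinite) weighted series, justified by the KLMN lower bound, is a small but legitimate refinement that the paper's terser proof leaves implicit.
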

\begin{proof}
  Clearly, $E_\text{DM}[v] \leq E[v]$, since $\mathcal{E}_0[\psi] + \mathcal{V}[\psi] = \Tr(\hat{H}[v]\ket{\psi}\bra{\psi})$, i.e., the search domain is larger in Eq.~\eqref{eq:E-DM}. On the other hand, $E_\text{DM}[v] \geq E[v]$, since for any $\gamma = \sum_k \lambda_k \ket{\psi_k}\bra{\psi_k}$, $\Tr(\hat{H}[v]\gamma) = \sum_k \lambda_k [\mathcal{E}_0[\psi_k] + \mathcal{V}[\psi_k]] \geq \sum_k \lambda_k E[v] = E[v]$.
\end{proof}
Both $F_\text{LL}$ and $F_\text{DM}$ have the important property that they are \emph{expectation valued}, i.e., that the infimums in their definition are attained~\cite{Kvaal2015} as expectation values of unique states:
\begin{theorem}\label{thm:expvalued}
  For every $\rho \in \mathcal{D}^N$, there exists a unique $\psi_\rho \in \mathcal{W}_N$ such that $F_\text{LL}[\rho] = \braket{\psi_0|\hat{H}[0]|\psi_0}$, and a unique $\gamma_\rho = \sum_k \lambda_k \ket{\psi_k} \bra{\psi_k}$ such that $F_\text{DM}[\rho] = \Tr(\gamma \hat{H}[0]) = \sum_{k} \lambda_k \braket{\psi_0,\hat{H}[0]\psi_0}$, with $\psi_k \in \mathcal{W}_N$.
\end{theorem}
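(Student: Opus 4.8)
The plan is to prove both assertions by the direct method of the calculus of variations, handling attainment of the two infima first and uniqueness afterwards; attainment is the substantive analytic step, while uniqueness is the more delicate point and, I expect, the main obstacle.

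For $F_\text{LL}$, I would fix $\rho\in\mathcal{D}^N$ and choose a minimizing sequence $\psi_n\in\mathcal{W}_N$ with $\rho_{\psi_n}=\rho$ and $\mathcal{E}_0[\psi_n]\to F_\text{LL}[\rho]$. Since the interaction form is nonnegative, $\mathcal{E}_0[\psi_n]\ge\tfrac12\|\nabla\psi_n\|_2^2$, so the sequence is bounded in $H^1_N$, and weak sequential compactness of bounded sets in this Hilbert space yields a subsequence with $\psi_n\rightharpoonup\psi$. The crux is that the weak limit be admissible. Passing to strong $L^2_\text{loc}$ convergence by Rellich--Kondrachov gives $\rho_{\psi_n}\to\rho_\psi$ in $L^1_\text{loc}(\R^3)$, whence $\rho_\psi=\rho$ on every ball and thus a.e.; the normalization is then automatic because $\|\psi\|_2^2=N^{-1}\|\rho_\psi\|_1=N^{-1}\|\rho\|_1=1$, so $\psi\in\mathcal{W}_N$. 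No mass escapes to infinity precisely because the density is held fixed at $\rho$, so the tails $\int_{|\bfr|>R}\rho$ are uniformly small. Weak lower semicontinuity of $\mathcal{E}_0$ — from convexity and strong continuity of the kinetic term, together with a Fatou bound on the nonnegative interaction using the a.e.\ convergence — then gives $\mathcal{E}_0[\psi]\le\liminf\mathcal{E}_0[\psi_n]=F_\text{LL}[\rho]$, while admissibility of $\psi$ forces the reverse inequality. Hence $\psi=:\psi_\rho$ attains the infimum.

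For $F_\text{DM}$ I would run the analogous argument one level up, in the convex set $\mathcal{D}^N_\text{op}$ of density operators. A minimizing sequence has uniformly bounded kinetic energy $\Tr(T\gamma_n)$, which supplies the compactness needed to extract a limiting $\gamma_\rho\in\mathcal{D}^N_\text{op}$ with $\gamma_\rho\mapsto\rho$; the lower semicontinuity of $\gamma\mapsto\Tr(\hat{H}[0]\gamma)$ — exactly the property proved in Lieb's Theorem~4.4 and already invoked in the identification $F=F_\text{DM}$ — then yields attainment, with the fixed-density constraint again ruling out loss of mass at infinity.

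The hard part is uniqueness, and the two cases fail to be routine for complementary reasons. For $\gamma_\rho$ the objective $\Tr(\hat{H}[0]\gamma)$ is \emph{linear} on the convex admissible set, so its minimizers a priori form a convex face rather than a single point; uniqueness cannot come from the objective and must be extracted from the interplay of the constraint with the structure of the constrained ground state. For $\psi_\rho$ the difficulty is dual: although $\mathcal{E}_0$ is convex, the admissible set $\{\psi:\rho_\psi=\rho\}$ is \emph{not} convex, so a strict-convexity argument cannot be applied verbatim. My plan would be to reduce both statements to a non-degeneracy property of the constrained minimizer: were $\rho$ interacting $v$-representable, the minimizer would be a genuine ground state of $\hat{H}[v]$ for the effective potential $v$, and uniqueness would follow from a suitable form of ground-state non-degeneracy. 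The obstacle is that a generic $\rho\in\mathcal{D}^N$ need not be $v$-representable, so this reduction is not directly available; obtaining uniqueness unconditionally, and controlling the fermionic degeneracies, is the genuinely subtle point, and here I would lean on the detailed treatment of the expectation-valued property in Ref.~\cite{Kvaal2015}.
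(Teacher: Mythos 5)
Your attainment argument is sound and is, in substance, the proof the paper points to but does not write out: the paper's entire proof is the citation ``Theorem 3.3 and Corollary 4.5(ii) in Ref.~\cite{Lieb1983}'', and Lieb's Theorem 3.3 is essentially your direct-method argument — a minimizing sequence bounded in $H^1_N$ because $W \geq 0$, weak compactness, preservation of the density constraint, and weak lower semicontinuity (norm for the kinetic term, Fatou for the nonnegative interaction). One ordering caveat in your write-up: strong $L^2_\text{loc}$ convergence from Rellich--Kondrachov does \emph{not} by itself give $L^1_\text{loc}$ convergence of the densities, because the marginal $\rho_\psi(\bfr)$ integrates over the \emph{unbounded} remaining coordinates $\bfx_2,\dots,\bfx_N$. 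You need your tightness remark first — by antisymmetry, any particle escaping to infinity in any coordinate shows up in the tail of $\rho_{\psi_n}=\rho$, whose tails are uniformly small — to upgrade to global strong $L^2$ convergence, and only then do the densities converge and the normalization survive. The ingredients are all in your proposal, just deployed in a misleading order. The density-matrix case is sketchier and partly leans on Lieb's Theorem 4.4, which is acceptable given the paper has already invoked that result.

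The genuine issue is uniqueness, and here your suspicion is exactly right: you cannot prove it, and neither does the paper. For pure states, $e^{\ri\theta}\psi_\rho$ (and the complex conjugate $\overline{\psi_\rho}$) have the same density and the same energy, so a minimizer is never literally unique; on the density-matrix side the objective $\Tr(\hat{H}[0]\gamma)$ is linear on the convex constraint set, so the minimizers form a face that need not be a single point — precisely the two obstructions you identify. The results the paper cites (Lieb's Theorem 3.3 and Corollary 4.5(ii)) establish only that the infima are \emph{attained}, i.e., the ``expectation valued'' property, not uniqueness; the word ``unique'' in the theorem statement is an overstatement that the cited proof does not support. So your honest deferral of uniqueness does not put you behind the paper: the attainment part, which you prove directly, is the entire mathematical content that the paper's own proof actually covers, and your attempted reduction of uniqueness to $v$-representability plus ground-state non-degeneracy correctly diagnoses why no such argument can work for a generic $\rho \in \mathcal{D}^N$.
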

\begin{proof}
  Theorem 3.3 and Corollary 4.5(ii) in Ref.~\cite{Lieb1983}.
\end{proof}

\subsection{Sub- and superdifferentiability}

Equations~\eqref{eq:E-to-F} and \eqref{eq:F-to-E} cannot in general be differentiated to find a critical point condition, even if this is routinely done in the physics literature, see for example the classic monograph~\cite{Parr1989} by Parr and Yang. Indeed, neither $F$ nor $E$ are differentiable in general. However,
for convex optimization problems, the weakest useful notion of differentiability is not the usual Gâteaux or Fréchet differentiability, but that of subdifferentiability.

\begin{definition}\label{def:subdiff}
  Let $B$ and $C$ be topological vector spaces.
  Let $f: B \to \R\cup\{+\infty\}$, and let $x \in B$. The \emph{subdifferential} of $f$ at $x$ is the set
  \begin{equation}
    {\subdiff} f[x] = \left\{ y \in B^* \mid \forall x' \in B, \, f[x] + \braket{y,x'-x} \leq f[x']   \right\}.
  \end{equation}
  The elements are called \emph{subgradients}.
  Similarly, the \emph{superdifferential} of $g : C \to \R\cup\{-\infty\}$ at $x$ is the set
  \begin{equation}
    {\supdiff} g[x] = \left\{ y \in C^* \mid \forall x' \in C, \, g[x] + \braket{y,x'-x} \geq g[x']   \right\}.
  \end{equation}
  The elements are called \emph{supergradients}.
\end{definition}
In other words, the subdifferential is the set of slopes of  tangent functionals of $f$ at $x$ that are nowhere above the graph of $f$, i.e., below-supporting tangent functionals. Similarly, the superdifferential consists of the set of slopes of above-supporting tangent functionals. Note, that for $f \in \Gamma_0(B^*_w)$, $C = B^*$ with weak-$*$ topology, and thus ${\supdiff} f[x] \subset B$.

Important properties of the subdifferential are be summarized in the following proposition, whose proof is so easy we skip it:
\begin{proposition}\label{prop:subdiff}
  Suppose $f \in \Gamma_0(B)$, $g = f^\wedge \in -\Gamma_0(B_w^*)$ (such that $f = g^\vee$) form a skew-conjugate pair of functionals. Then the following holds:
  \begin{enumerate}
    \item\label{prop:subdiff:1}
      The subdifferential ${\subdiff} f : B \to 2^{B^*}$ is a monotone operator, i.e., for all $x_1,x_2\in B$, and for all $y_i \in {\subdiff} f[x_i] \subset B^*$,
      \begin{equation}
         \braket{y_1-y_2,x_1-x_2} \geq 0\,. \label{eq:monotone}
      \end{equation}
      The superdifferential ${\supdiff} g : C \to 2^{C^*}$ is similarly monotone, i.e., for all $x_1,x_2\in C$, and for all $y_i \in {\supdiff} g[x_i]\subset C^*$,
      \begin{equation}
         \braket{y_1-y_2,x_1-x_2} \leq 0\,. \label{eq:monotone2}
      \end{equation}
      If $f$ ($g$) is additionally strictly convex (concave), then Eq.~\eqref{eq:monotone} [\eqref{eq:monotone2}] holds strictly if $x_1\neq x_2$.
    \item
      Let $x\in B$ and $y \in B^*$ be given.
      Then Fenchel's inequality holds,
      \[ g[x] - f[y] \leq \braket{y,x} \]
    \item\label{prop:subdiff:3}
 The following are equivalent:
    \begin{enumerate}
      \item\label{prop:subdiff:3:1} $g[y] - f[x] = \braket{y,x}$
    \item\label{prop:subdiff:3:2} $-y \in {\subdiff} f[x]$
    \item\label{prop:subdiff:3:3} $x \in {\supdiff} g[y]$
    \item $f[x] + \braket{y,x}\leq f[x'] + \braket{y,x'}$ for all $x' \in B$
    \item $g[y] - \braket{y,x}\geq g[y'] - \braket{y',x}$ for all $y' \in B^*$
    \end{enumerate}
  \end{enumerate}
\end{proposition}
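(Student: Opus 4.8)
The plan is to treat the three parts in turn; each reduces to unfolding the relevant definition, so no deep machinery is required. For the monotonicity in part~\ref{prop:subdiff:1}, I would take $y_i \in {\subdiff} f[x_i]$ and write the two defining subgradient inequalities, using the test point $x' = x_2$ in the inequality at $x_1$ and $x' = x_1$ in the inequality at $x_2$:
\[ f[x_1] + \braket{y_1, x_2 - x_1} \leq f[x_2], \qquad f[x_2] + \braket{y_2, x_1 - x_2} \leq f[x_1]. \]
Adding these cancels $f[x_1] + f[x_2]$ and rearranges to $\braket{y_1 - y_2, x_1 - x_2} \geq 0$; the superdifferential statement is the mirror image obtained by reversing the inequalities. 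For the strict claim I would first observe that strict convexity upgrades the subgradient inequality itself to a strict one whenever $x' \neq x$: evaluating it at the midpoint $m = \tfrac{1}{2}(x + x')$ and combining with $f[m] < \tfrac{1}{2}(f[x] + f[x'])$ yields $f[x] + \braket{y, x' - x} < f[x']$. Feeding these strict inequalities into the addition step gives strict monotonicity for $x_1 \neq x_2$.

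Part~2 is then immediate: since $f = g^\vee$, for any $x \in B$ and $y \in B^*$ one has $f[x] = \sup_{y' \in B^*}\{g[y'] - \braket{y', x}\} \geq g[y] - \braket{y, x}$ by taking $y' = y$, which rearranges to Fenchel's inequality $g[y] - f[x] \leq \braket{y, x}$.

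For the equivalences in part~\ref{prop:subdiff:3} I would use two purely definitional identities plus two conjugation identities. Unfolding ${\subdiff} f$, statement~\ref{prop:subdiff:3:2} reads $f[x] + \braket{-y, x' - x} \leq f[x']$ for all $x'$, which rearranges \emph{verbatim} into the fourth statement $f[x] + \braket{y,x} \leq f[x'] + \braket{y,x'}$; symmetrically, unfolding ${\supdiff} g$ shows statement~\ref{prop:subdiff:3:3} is literally the fifth statement, $g[y] - \braket{y,x} \geq g[y'] - \braket{y',x}$. It remains to link the fourth and fifth statements to \ref{prop:subdiff:3:1}. The fourth statement says exactly that $x$ minimizes $x' \mapsto f[x'] + \braket{y, x'}$, whose infimal value is by definition $f^\wedge[y] = g[y]$; hence it holds iff $f[x] + \braket{y, x} = g[y]$, which is \ref{prop:subdiff:3:1}. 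The same reasoning with $f = g^\vee$ identifies the fifth statement with \ref{prop:subdiff:3:1} through the supremum defining $g^\vee[x] = f[x]$. Chaining these equivalences completes the set.

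I do not anticipate a genuine obstacle, as the content is essentially bookkeeping. The one point demanding care is sign consistency under the skew conjugation: the explicit $-y$ in statement~\ref{prop:subdiff:3:2}, together with the sign conventions built into $f^\wedge$ and $g^\vee$, must be tracked exactly so that the definitional rearrangements land on the correct statements. The single non-routine ingredient is the midpoint argument used to promote the subgradient inequality to a strict one under strict convexity.
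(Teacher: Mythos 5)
Your proposal is correct, and it is exactly the intended argument: the paper itself gives no proof, dismissing the proposition with ``Easy exercise,'' and your unfolding of the subgradient definition (two inequalities with swapped test points, the midpoint argument for strictness), the one-line Fenchel inequality, and the chain \ref{prop:subdiff:3:2}$\Leftrightarrow$(iv)$\Leftrightarrow$\ref{prop:subdiff:3:1}$\Leftrightarrow$(v)$\Leftrightarrow$\ref{prop:subdiff:3:3} via the variational definitions of $f^\wedge$ and $g^\vee$ is the standard route the author had in mind. You also correctly read part~2 as $g[y]-f[x]\leq\braket{y,x}$, silently fixing the paper's typographical swap of the arguments.
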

\begin{proof} Easy exercise. \end{proof}

The equivalence of \ref{prop:subdiff:3}\ref{prop:subdiff:3:2} and \ref{prop:subdiff:3}\ref{prop:subdiff:3:3} is particularly important, showing that two optimization problems are equivalent. Applied to exact DFT, we obtain the equivalence
\[ -v \in {\subdiff} F[\rho] \iff \rho \in {\supdiff} E[v] \iff E[v] = F[\rho] + \int v\rho\,, \]
giving the critical point conditions of Eqs.~\eqref{eq:E-to-F} and \eqref{eq:F-to-E}. We also have the following result, which relates the optimality condition to ground states on density operator form~\cite{Kvaal2015}:
\begin{proposition}\label{prop:ground-states}
    Let $\rho \in \mathcal{D}^N$ and $v \in X^*$ be given. Define $G[v] \subset\mathcal{D}^N_\text{op}$ as the set of minimizers for Eq.~\eqref{eq:E-DM}.
    \begin{enumerate}
    \item\label{prop:ground-states:1}
          $G[v] = \operatorname{co}\{ \ket{\psi}\bra{\psi} \mid \psi \in \mathcal{W}_N, \, E[v] = \braket{\psi,\hat{H}[v]\psi} \}$, the convex hull of pure-state ground-state density operators.
    \item\label{prop:ground-states:2}
      $E[v] = F[\rho] + \int v\rho$ if and only if there is a $\gamma \in G[v]$, with $\gamma \mapsto \rho$.
\end{enumerate}
\end{proposition}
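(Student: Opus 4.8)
The plan is to reduce everything to facts already in hand: the identifications $E = E_\text{DM}$ and $F = F_\text{DM}$, the expectation-valuedness of $F_\text{DM}$ (Theorem~\ref{thm:expvalued}), and the elementary additive split
$\Tr(\hat{H}[v]\gamma) = \Tr(\hat{H}[0]\gamma) + \int v\rho_\gamma$,
where $\rho_\gamma = \sum_k \lambda_k \rho_{\psi_k}$ is the density of $\gamma = \sum_k \lambda_k \ket{\psi_k}\bra{\psi_k}$. This split follows from $\hat{H}[v] = \hat{H}[0] + V$ together with linearity of the trace, since $\Tr(V\gamma) = \sum_k \lambda_k \braket{\psi_k, V\psi_k} = \int v\rho_\gamma$.

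For part \ref{prop:ground-states:1} I would prove the two inclusions separately, writing $P[v] := \{\,\ket{\psi}\bra{\psi} \mid \psi \in \mathcal{W}_N,\ E[v] = \braket{\psi,\hat{H}[v]\psi}\,\}$ for the set of pure-state ground states. The inclusion $\supseteq$ is immediate: if $\gamma = \sum_k \lambda_k \ket{\psi_k}\bra{\psi_k}$ is a convex combination of elements of $P[v]$, then $\Tr(\hat{H}[v]\gamma) = \sum_k \lambda_k \braket{\psi_k, \hat{H}[v]\psi_k} = \sum_k \lambda_k E[v] = E[v]$, so $\gamma$ attains the infimum and lies in $G[v]$. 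For $\subseteq$, take $\gamma = \sum_k \lambda_k \ket{\psi_k}\bra{\psi_k} \in G[v]$. Since $E[v]$ is the bottom of the spectrum of $\hat{H}[v]$, each normalized $\psi_k$ satisfies $\braket{\psi_k, \hat{H}[v]\psi_k} \geq E[v]$; combined with $\sum_k \lambda_k = 1$ this gives $E[v] = \sum_k \lambda_k \braket{\psi_k,\hat{H}[v]\psi_k} \geq E[v]$, forcing equality, hence $\braket{\psi_k,\hat{H}[v]\psi_k} = E[v]$ for every $k$ with $\lambda_k > 0$. Each such $\psi_k$ is then a pure-state ground state, so $\gamma \in \operatorname{co}(P[v])$. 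The only delicate point is bookkeeping: the decomposition of $\gamma$ may be a countable sum, so $\operatorname{co}$ must be read to permit the same countable convex combinations that define $\mathcal{D}^N_\text{op}$, and I would state this convention explicitly.

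For part \ref{prop:ground-states:2}, the easy direction is $(\Leftarrow)$. Given $\gamma \in G[v]$ with $\gamma \mapsto \rho$, the split yields $E[v] = \Tr(\hat{H}[v]\gamma) = \Tr(\hat{H}[0]\gamma) + \int v\rho \geq F_\text{DM}[\rho] + \int v\rho = F[\rho] + \int v\rho$, the inequality holding because $\gamma$ competes in the infimum defining $F_\text{DM}[\rho]$. Since Eq.~\eqref{eq:F-to-E} always gives $E[v] \leq F[\rho] + \int v\rho$, the two combine to equality. For $(\Rightarrow)$, I would invoke attainment: because $\rho \in \mathcal{D}^N$, Theorem~\ref{thm:expvalued} furnishes a minimizer $\gamma_\rho \mapsto \rho$ with $F_\text{DM}[\rho] = \Tr(\hat{H}[0]\gamma_\rho)$. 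Applying the split, $\Tr(\hat{H}[v]\gamma_\rho) = F_\text{DM}[\rho] + \int v\rho = F[\rho] + \int v\rho = E[v]$, so $\gamma_\rho$ is a density-matrix ground state, i.e.\ $\gamma_\rho \in G[v]$, and it maps to $\rho$ by construction.

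The main obstacle, and the one genuinely nontrivial ingredient, is the use of attainment in $(\Rightarrow)$: without Theorem~\ref{thm:expvalued} one would only extract a minimizing sequence for $F_\text{DM}[\rho]$ rather than an actual minimizer $\gamma_\rho$, and the step producing a concrete element of $G[v]$ would collapse. Everything else is elementary once $E = E_\text{DM}$, $F = F_\text{DM}$, and the additivity of the energy functional in $v$ are granted.
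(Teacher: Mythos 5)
Your proposal is correct and follows essentially the same route as the paper's proof: the additive split $\Tr(\hat{H}[v]\gamma) = \Tr(\hat{H}[0]\gamma) + \int v\rho_\gamma$, the observation that every pure-state term in a minimizing $\gamma$ must itself attain $E[v]$ (you argue this directly, the paper by contraposition — logically the same step), the infimum definition of $F = F_\text{DM}$ combined with $E[v] \leq F[\rho] + \int v\rho$ for the ``if'' direction, and Theorem~\ref{thm:expvalued} for attainment in the ``only if'' direction. Your explicit remark that $\operatorname{co}$ must allow the countable convex combinations defining $\mathcal{D}^N_\text{op}$ is a point the paper leaves implicit, and worth making.
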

\begin{proof}
  Proof of \ref{prop:ground-states:1}:
    Clearly the stated convex hull is a subset of $G[v]$, since for $\gamma = \sum_k \lambda_k \ket{\psi_k}\bra{\psi_k}$ with all the $\psi_k$ ground-states, $\Tr(\hat{H}[v]\gamma) = E[v]$. On the other hand, assume that $\gamma$ is not in this convex hull. Then for at least one $\psi_k$ in its decomposition, $\braket{\psi_k,\hat{H}[v]\psi_k} > E[v]$, and $\Tr(\hat{H}[v]\gamma) > E[v]$.
  Proof of \ref{prop:ground-states:2}:
  If: If $G[v]$ is empty, then $E[v] < F[\rho_\gamma] + \int\rho_\gamma v$ for any $\gamma$. Let $\gamma \in G[v]$, $\gamma\mapsto\rho$. Then $E[v] = \Tr(\hat{H}[v]\gamma) = \Tr(\hat{H}[0]\gamma) + \int \rho v$. Now, by definition $F[\rho] \leq \Tr(\hat{H}[0]\gamma)$, but clearly equality must hold, otherwise we obtain a contradiction. Thus $E[v] = F[\rho] + \int \rho v$.
  Only if: Since $F$ is expectation valued (Theorem~\ref{thm:expvalued}), there is a $\gamma \mapsto \rho$ such that $F[\rho] = \Tr(\hat{H}[0]\gamma)$, and hence $E[v] = \Tr(\hat{H}[0]\gamma) + \int \rho v = \Tr(\hat{H}[v]\gamma)$. Thus, $\gamma \in G[v]$.
\end{proof}

\subsection{Regularity of $E$ and $F$}

We next state some regularity results for convex functions in $\Gamma_0(B)$, with $B$ being a Banach space throughout this section.
\begin{definition}[Gâteaux and Fréchet differentiability]\label{def:gateaux}
 A proper  functional $f : B \to \R \cup \{+\infty\}$ is called \emph{Gâteaux differentiable}
 at a point $x\in\dom(f)$, if for all $h$ in $B$, the directional derivative $f'(x;h) = \lim_{t\to 0^+} t^{-1}[f(x+th)-f(x)]$ exists, such that $f'(x;h) = \braket{\nabla f(x),h}$ for some $\nabla f(x) \in B^*$ called the \emph{Gâteaux derivative}. If additionally
 \[ f(x+h) = f(x) + \braket{\nabla f(x),h} + o(\|h\|) \]
 as $\|h\|\to 0$,
 then $f$ is said to be \emph{Fréchet differentiable} at $x$.\end{definition}
Note that we require the directional derivative to be a continuous linear functional. Some authors do not require the directional derivative to be continuous or even not continuous in the definition of Gâteaux differentiability.
\begin{theorem}\label{thm:singleton}
    Let $f\in \Gamma_0(B)$, and $x \in B$. If $f$ is G\^ateaux differentiable at $x$ then ${\subdiff} f(x) = \{ g\}$, a singleton, where $g = \nabla f (x_0)$, the G\^ateaux derivative. Conversely, if $f$ is continuous at $x_0$ and ${\subdiff} f(x_0)$ is a singleton, then $f$ is G\^ateaux differentiable at $x_0$, and $\{\nabla f(x_0)\} = {\subdiff} f(x_0)$.
\end{theorem}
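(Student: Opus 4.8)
The plan is to treat the two implications separately, exploiting convexity of $f$ through the monotonicity of its difference quotients.

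\textbf{Forward direction.} Assume $f$ is Gâteaux differentiable at $x$ with derivative $g = \nabla f(x)$. First I would record the elementary but crucial fact that for convex $f$ the map $t \mapsto t^{-1}[f(x+th) - f(x)]$ is nondecreasing on $(0,\infty)$; comparing the value at $t=1$ with the limit as $t\to 0^+$ yields $\braket{g,h} = f'(x;h) \le f(x+h) - f(x)$ for every $h$. Setting $h = x'-x$ this is precisely the subgradient inequality, so $g \in {\subdiff} f(x)$. For uniqueness, take any $y \in {\subdiff} f(x)$; applying the subgradient inequality along $x + th$, dividing by $t>0$ and letting $t \to 0^+$ gives $\braket{y,h} \le f'(x;h) = \braket{g,h}$ for all $h$, and replacing $h$ by $-h$ forces equality, hence $y = g$. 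Thus ${\subdiff} f(x) = \{g\}$.

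\textbf{Converse direction.} Assume $f$ is continuous at $x_0$ and ${\subdiff} f(x_0) = \{g\}$. The key structural object is the directional-derivative functional $p(h) := f'(x_0;h)$. Using again the monotonicity of difference quotients, together with local boundedness of $f$ near $x_0$ (a convex function continuous at a point is locally Lipschitz there), I would first show that the limit defining $p(h)$ exists and is finite for every $h$, and that $p$ is \emph{sublinear}, i.e. positively homogeneous and subadditive; continuity of $f$ at $x_0$ moreover makes $p$ bounded on a neighborhood of the origin, hence continuous. I would then establish the characterization
\[ {\subdiff} f(x_0) = \left\{\, y \in B^* \mid \braket{y,h} \le p(h) \ \text{for all } h \in B \,\right\}, \]
so that the subdifferential is exactly the set of continuous linear functionals dominated by $p$: the inclusion $\supseteq$ is immediate from $p(h) \le f(x_0+h)-f(x_0)$, and the reverse inclusion follows because $\braket{y,h}\le p(h)$ recovers the subgradient inequality via the same monotonicity estimate as above.

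The crux, and the step I expect to be the main obstacle, is to deduce \emph{linearity} of $p$ from the singleton hypothesis. Sublinearity gives $p(h) + p(-h) \ge p(0) = 0$, and $p$ is linear precisely when equality holds for all $h$. Suppose for contradiction that $p(h_0) + p(-h_0) > 0$ for some $h_0 \neq 0$, so that $-p(-h_0) < p(h_0)$. On the line $\R h_0$ I would define, for any constant $c$ with $-p(-h_0) \le c \le p(h_0)$, the functional $\ell(t h_0) = tc$; using positive homogeneity of $p$ one checks directly that $\ell \le p$ on $\R h_0$. The Hahn--Banach theorem then extends $\ell$ to some $y \in B^*$ with $\braket{y,\cdot} \le p$ on all of $B$ (continuity of $y$ being automatic since $p$ is continuous), so $y \in {\subdiff} f(x_0)$. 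Choosing two distinct admissible values of $c$ produces two distinct subgradients, contradicting ${\subdiff} f(x_0) = \{g\}$. Hence $p(h) = -p(-h)$ for every $h$; combined with subadditivity this forces $p$ to be additive and, with positive homogeneity, fully linear, so $p \in B^*$. Since $p$ is trivially dominated by itself it lies in ${\subdiff} f(x_0) = \{g\}$, giving $p = g$ and therefore $f'(x_0;h) = \braket{g,h}$ for all $h$, which is exactly Gâteaux differentiability with $\nabla f(x_0) = g$.
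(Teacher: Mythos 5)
Your proof is correct. Note, however, that the paper itself does not prove this theorem at all: it simply cites Proposition 2.40 of Barbu--Precupanu, so there is no in-paper argument to compare against; what you have written is essentially the standard textbook proof that the citation points to. Your forward direction (monotone difference quotients giving $\braket{g,h}\le f(x+h)-f(x)$, then squeezing any subgradient between $f'(x;\pm h)$) is the classical argument and is complete. In the converse, your strategy — show the directional derivative $p(h)=f'(x_0;h)$ is a finite, continuous sublinear functional, identify ${\subdiff} f(x_0)$ as exactly the continuous linear functionals dominated by $p$, and then use Hahn--Banach along a line $\R h_0$ to manufacture two distinct subgradients whenever $-p(-h_0)<p(h_0)$ — is exactly the right mechanism, and your execution is sound: the extension is automatically continuous because $p$ is dominated by a multiple of the norm (this is where local Lipschitz continuity, hence the continuity hypothesis, enters), and antisymmetry $p(-h)=-p(h)$ plus subadditivity does force additivity and hence linearity. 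One virtue of writing the proof out as you did is that it makes visible precisely why the continuity hypothesis in the converse cannot be dropped (a point the paper flags in a remark but does not explain): without it, $p$ need not be finite or continuous, the Hahn--Banach extensions need not lie in $B^*$, and a singleton subdifferential no longer forces linearity of the directional derivative. What the paper's approach buys, by contrast, is only brevity.
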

\begin{proof}
  See Proposition 2.40 in Ref.~\cite{Barbu2012}.
\end{proof}
\begin{remark}
  Note the continuity requirement in the converse statement. The subgradient being a singleton is not alone sufficient to guarantee G\^ateaux differentiability.
\end{remark}

The following theorem demonstrates that convexity together with local boundedness above is a quite strong assumption on an $f\in \Gamma_0(B)$.
\begin{theorem}\label{thm:finite-cont}
  Suppose $f \in \Gamma_0(B)$, with $B$ a Banach space.
  \begin{enumerate}
    \item\label{thm:finite-cont:1} If $f$ is locally bounded above near $x\in B$,  then $f$ is locally Lipschitz continuous near $x$, and  ${\subdiff} f(x)$ is nonempty.
    \item\label{thm:finite-cont:2} If $f$ is defined on a convex open set $C\subset B$, and is locally bounded above near \emph{some} $x\in C$, then $f$ is locally Lipschitz near \emph{all} of $x \in C$.
  \end{enumerate}
\end{theorem}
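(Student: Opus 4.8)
The plan is to prove the two parts in sequence, with part~\ref{thm:finite-cont:1} carrying the analytic weight and part~\ref{thm:finite-cont:2} reducing to it by a propagation argument. Throughout I write balls via $\|y-x\|<\rho$ to avoid clashing with the symbol $B$ for the space.

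For part~\ref{thm:finite-cont:1}, I would first upgrade the one-sided bound to a two-sided one. Suppose $f(y)\le M$ whenever $\|y-x\|<2r$. Being bounded above at $x$, $f(x)$ is finite, so $x\in\dom(f)$, and since $f\in\Gamma_0(B)$ never attains $-\infty$, $f(x)$ is a genuine real number. For any $y$ with $\|y-x\|<2r$ the reflected point $2x-y$ satisfies the same bound, and convexity applied to $x=\tfrac12 y+\tfrac12(2x-y)$ gives $f(y)\ge 2f(x)-f(2x-y)\ge 2f(x)-M$. Hence $|f|\le M'$ on $\{\|y-x\|<2r\}$ for some finite $M'$. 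The core estimate is then a chord-extension argument: given distinct $y,z$ with $\|y-x\|,\|z-x\|<r$, set $w=z+r\,(z-y)/\|z-y\|$, which satisfies $\|w-x\|\le\|z-x\|+r<2r$, so $w$ lies in the ball where $|f|\le M'$. Then $z=(1-\lambda)y+\lambda w$ with $\lambda=\|z-y\|/(\|z-y\|+r)\in(0,1)$, and convexity yields $f(z)-f(y)\le\lambda\,(f(w)-f(y))\le 2M'\lambda\le (2M'/r)\,\|z-y\|$. Swapping the roles of $y,z$ gives the Lipschitz bound $|f(y)-f(z)|\le(2M'/r)\,\|y-z\|$ on $\{\|y-x\|<r\}$; in particular $f$ is continuous at $x$.

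Nonemptiness of $\subdiff f(x)$ then follows from a supporting-hyperplane argument applied to the epigraph $\operatorname{epi}(f)=\{(y,t)\in B\times\R\mid f(y)\le t\}$, which is convex. Continuity of $f$ at the interior point $x$ of $\dom(f)$ guarantees $\operatorname{epi}(f)$ has nonempty interior while $(x,f(x))$ lies on its boundary, so Hahn--Banach separation produces a nonzero $(\phi,\alpha)\in B^*\times\R$ with $\braket{\phi,y}+\alpha t\ge\braket{\phi,x}+\alpha f(x)$ for all $(y,t)\in\operatorname{epi}(f)$. Letting $t\to+\infty$ forces $\alpha\ge 0$, and $\alpha=0$ is excluded since it would give $\braket{\phi,y}\ge\braket{\phi,x}$ for all $y$ near $x$, hence $\phi=0$, contradicting $(\phi,\alpha)\neq 0$. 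Normalizing $\alpha=1$ and evaluating at $t=f(y)$ gives $f(y)\ge f(x)+\braket{-\phi,y-x}$ for all $y$, i.e.\ $-\phi\in\subdiff f(x)$.

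For part~\ref{thm:finite-cont:2}, I would propagate the bound from the single good point $x_0$, where $f\le M$ on $\{\|u-x_0\|<r\}$, to an arbitrary $x_1\in C$. Since $C$ is open and convex, there is $\mu>1$ with $x_2:=x_0+\mu(x_1-x_0)\in C$; putting $\lambda=1/\mu$ we have $x_1=(1-\lambda)x_0+\lambda x_2$. As $u$ ranges over $\{\|u-x_0\|<r\}$, the point $(1-\lambda)u+\lambda x_2=x_1+(1-\lambda)(u-x_0)$ ranges over $\{\|\cdot-x_1\|<(1-\lambda)r\}$, and convexity bounds $f$ there above by $(1-\lambda)M+\lambda f(x_2)<+\infty$. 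Thus $f$ is locally bounded above near $x_1$, and part~\ref{thm:finite-cont:1} yields local Lipschitz continuity at $x_1$; as $x_1\in C$ was arbitrary, the conclusion follows. I expect the main obstacle to be the chord-extension estimate: choosing the auxiliary point $w$ and the coefficient $\lambda$ so that the mere two-sided bound $|f|\le M'$ converts into a Lipschitz constant independent of the pair $(y,z)$. The reflection bound, the Hahn--Banach step, and the propagation in part~\ref{thm:finite-cont:2} are then routine.
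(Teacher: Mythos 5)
Your proposal is correct and follows essentially the same route as the paper: a reflection argument upgrades the one-sided bound to a two-sided one, a chord-extension argument with an auxiliary point beyond the chord gives the Lipschitz estimate, and part (ii) propagates the local upper bound along segments in $C$ before invoking part (i). The only difference is that you carry out the Hahn--Banach epigraph-separation argument for nonemptiness of the subdifferential explicitly, a step the paper delegates to the geometric form of the Hahn--Banach theorem via a citation.
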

\begin{proof}
  Proof of \ref{thm:finite-cont:1}:
  Let $x\in B$ be given, and let $B$ be the closed ball of radius $\delta>0$ around $x$. Suppose $f(x)\leq M$ in $B$. Let $x + h \in B$ be arbitrary.
  Since $ x = (x + h)/2 + (x - h)/2$, convexity of $f$ gives
  $f(x) \leq f(x+h)/2 + f(x - h)/2$. Thus
  \[ f(x + h) \geq 2 f(x) - f(x-h) \geq 2 f(x) - M, \]
  which implies
  \[ |f(x + h)| \leq \max\{ M, M - 2 f(x) \} \leq |M| + 2|f(x)| =: M'. \]
  Let $B'\subset B$ be a slightly smaller concentric ball of radius $\delta - \epsilon$. Let $y_1,y_2\in B'$. Consider the point
  \[ z = y_1 + \frac{\epsilon }{\|y_1 - y_2\|}(y_1 - y_2)\,. \]
  Now $\|z - x\| \leq \delta$ so that $x \in B$, and $y_2$ lies in the open interval $(y_1,z)$. Explicitly, $y_2$ is given by the convex combination
  \[ y_2 = \frac{\epsilon}{\epsilon + \|y_2-y_1\|} y_1 + \frac{\|y_2-y_1\|}{\epsilon + \|y_2-y_1\|} z\,, \]
  Convexity of $f$ now gives
  \[ f(y_2) \leq \frac{\epsilon}{\epsilon + \|y_2-y_1\|} f(y_1) + \frac{\|y_2-y_1\|}{\epsilon + \|y_2-y_1\|} f(z)\,, \]
  which after rearrangement gives
  \[ f(y_2)-f(y_1) \leq (f(z) - f(y_2))\|y_2-y_1\| \leq \frac{2 M'}{\epsilon} \|y_2-y_1\|\,. \]
  Repeating the argument with $y_1$ and $y_2$ interchanged gives the desired Lipschitz continuity.

  The existence of subgradients near $x$ is a consequence of the geometric form of the Hahn--Banach theorem, see, e.g., Theorem 1.36 in Ref.~\cite{Barbu2012}. For the complete proof, see Proposition 2.36 in Ref~\cite{Barbu2012}.

  Proof of \ref{thm:finite-cont:2}: Let $B$ be a ball of radius $\delta$ around $x\in C$ on which $f$ is locally bounded above by a constant $M$. Let $y \in C$. There exists $z \in C$ such that $y = \lambda z + (1-\lambda)x$ for some $\lambda \in [0,1]$. Now, it is straightforward to see, that for all $x'\in C$ such that $\|x' - y\| \leq (1-\lambda)\delta$, $f(x') \leq \max\{ M, f(z) \}$.
  Thus, $f$ is locally bounded above near $y$, and by (1), locally Lipschitz near $y$. Since $y$ was arbitrary, we are done.
\end{proof}

We now discuss the behavior of the functionals $E \in -\Gamma_0(X^*_w)$ and $F \in \Gamma_0(X)$ terms of classical differentiability and subdifferentials. Note that $E$ is upper semicontinuous in the weak-$*$ topology on $X^*$, and indeed by \cite[Theorem 2.16]{Barbu2012} it is continuous in the stronger norm topology on $X^*$ since $E$ is everywhere defined and cleary upper semicontinuous as the infimum of a nonempty family of affine functions. However, we can say even more:
\begin{theorem}\label{thm:E-reg}\leavevmode
  \begin{enumerate}
    \item\label{thm:E-reg:1} The map $E : X^* \to \R$ is locally Lipschitz continuous.
    \item\label{thm:E-reg:2} Suppose $v \in X^*$.   Then $E$ is G\^ateaux differentiable at $v$ if and only if $\hat{H}[v]$ has a smallest eigenvalue, and all normalized eigenvectors belonging to this eigenvalue share the same density.
  \end{enumerate}
\end{theorem}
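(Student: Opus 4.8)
The plan is to dispatch both parts with the convex-analytic apparatus already assembled, working throughout with the convex function $-E$ and its (super)differential.

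\paragraph{Part 1.} I would apply Theorem~\ref{thm:finite-cont} to $-E$, viewed as a convex function on the Banach space $X^*$ with its norm topology. The only thing requiring verification is that $-E$ is locally bounded above near some point. Since $E$ is everywhere real-valued, the sublevel sets $C_n := \{v\in X^* \mid -E[v]\le n\}$ cover $X^*$; each is convex and norm-closed, because $-E$ is weak-$*$ lower semicontinuous (as $E$ is an infimum of affine functions, hence weak-$*$ upper semicontinuous) and the weak-$*$ topology is coarser than the norm topology, so weak-$*$ closed sublevel sets are norm closed. By the Baire category theorem, some $C_n$ has nonempty interior, which gives a point near which $-E$ is bounded above. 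Theorem~\ref{thm:finite-cont}\ref{thm:finite-cont:2} then spreads local boundedness above to all of $X^*$, and Theorem~\ref{thm:finite-cont}\ref{thm:finite-cont:1} upgrades this to local Lipschitz continuity of $-E$, hence of $E$ (this also reproves the continuity asserted just before the theorem).

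\paragraph{Part 2, reduction to a singleton.} Having continuity in hand, I would invoke the concave analogue of Theorem~\ref{thm:singleton} (apply it to $-E$): at the point of continuity $v$, $E$ is G\^ateaux differentiable if and only if $\supdiff E[v]$ is a singleton, and then the unique supergradient is the derivative. It therefore remains to compute $\supdiff E[v]$ and to decide when it collapses to one element.

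\paragraph{Identification of the superdifferential.} I would chain the two equivalences already proved. By Proposition~\ref{prop:subdiff}\ref{prop:subdiff:3}, $\rho\in\supdiff E[v]$ is equivalent to the optimality identity $E[v]=F[\rho]+\int v\rho$, and by Proposition~\ref{prop:ground-states}\ref{prop:ground-states:2} this holds exactly when there is a minimizing density operator $\gamma\in G[v]$ with $\gamma\mapsto\rho$. Hence $\supdiff E[v]=\{\rho_\gamma \mid \gamma\in G[v]\}$, the set of densities of ground-state density operators. By Proposition~\ref{prop:ground-states}\ref{prop:ground-states:1}, $G[v]=\operatorname{co}\{\ket{\psi}\bra{\psi}\mid \psi\in\mathcal{W}_N,\ E[v]=\braket{\psi,\hat{H}[v]\psi}\}$ is nonempty precisely when the infimum $E[v]$ is attained, i.e.\ when $\hat{H}[v]$ possesses a smallest eigenvalue (equality $E[v]=\braket{\psi,\hat{H}[v]\psi}$ singling out exactly its normalized eigenvectors). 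Since $\gamma\mapsto\rho_\gamma$ is affine, this set of densities is a single point if and only if every normalized ground-state eigenvector yields the same density: if two such eigenvectors had distinct densities, their rank-one projectors would already furnish two distinct members of $\supdiff E[v]$. Combining, $\supdiff E[v]$ is a singleton iff $\hat{H}[v]$ has a smallest eigenvalue and all its normalized eigenvectors share one density, which is the assertion.

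\paragraph{Main obstacle.} The delicate step, on which I would spend the most care, is the interplay between the norm topology driving Theorem~\ref{thm:singleton} and the weak-$*$ structure that makes $\supdiff E[v]$ land in $X$ rather than in the larger bidual $X^{**}$. A priori G\^ateaux differentiability only produces a derivative $\nabla E(v)\in (X^*)^*=X^{**}$, so I must exclude a spurious singular supergradient (mass escaping to infinity) in the unbound case where $\hat{H}[v]$ has no smallest eigenvalue. I would close this gap using the Danskin-type directional-derivative formula $E'(v;h)=\inf_{\gamma\in G[v]}\int \rho_\gamma h$ implied by the affine-in-$v$ representation $E[v]=\inf_\psi(\mathcal{E}_0[\psi]+\int\rho_\psi v)$, together with the expectation-valued attainment for $F$ in Theorem~\ref{thm:expvalued}: when no ground state exists, the infimum over minimizing sequences cannot be represented by a single $X$-density realizing $h\mapsto E'(v;h)$ as one continuous linear functional, so this map fails to be linear and differentiability fails. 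This is exactly where differentiability forces attainment of the smallest eigenvalue, and it is the step where the reflexive/nonreflexive distinction of $X$ must be handled explicitly rather than glossed.
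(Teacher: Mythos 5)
Your Part~1 is correct but takes a genuinely different route from the paper. The paper verifies the hypothesis of Theorem~\ref{thm:finite-cont} by an explicit Lieb-type estimate at $v=0$: writing $v=u+w$ with $\|u\|_{3/2}+\|w\|_\infty<L/3$ and using the Sobolev constant together with the kinetic-energy-density bound~\eqref{eq:rhoH1-estimate}, it shows $E[v]\geq -\tfrac13 NL-\tfrac13 g$ on that ball, i.e.\ the convex function $-E$ is locally bounded above near $0$, and then Theorem~\ref{thm:finite-cont} finishes. You obtain the same local boundedness softly: $E$ is everywhere finite and upper semicontinuous (infimum of continuous affine functions), so the superlevel sets of $E$ are norm-closed and cover $X^*$, and Baire (legitimate, since $X^*=L^{3/2}+L^\infty$ is a Banach space, as the paper notes) produces a set with interior. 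Both are valid; the paper's argument is quantitative and self-contained, yours is shorter and uses only completeness of $X^*$ plus upper semicontinuity of $E$. This is in fact the classical proof that a finite lower semicontinuous convex function on a Banach space is continuous.

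For Part~2, your reduction and identification are exactly the paper's proof: continuity plus the concave form of Theorem~\ref{thm:singleton} gives ``differentiable iff $\supdiff E[v]$ is a singleton,'' and Proposition~\ref{prop:subdiff}\ref{prop:subdiff:3} together with Proposition~\ref{prop:ground-states} identifies $\supdiff E[v]$ with the (convex) set of ground-state densities, which is empty when no smallest eigenvalue exists and a singleton precisely when all ground-state eigenvectors share one density. Where you depart from the paper is the ``main obstacle'' paragraph, and here two things should be said. First, the subtlety you flag is real: Theorem~\ref{thm:singleton}, applied to $-E$ on the Banach space $X^*$, concerns the subdifferential valued in $(X^*)^*=X^{**}$, whereas the supergradients identified with ground-state densities live in $X$; since $X$ is not reflexive, singular elements of $X^{**}\setminus X$ are not automatically excluded, and the paper's proof passes over this identification silently. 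Second, your proposed repair does not actually close the gap: the Danskin formula $E'(v;h)=\inf_{\gamma\in G[v]}\int\rho_\gamma h$ is vacuous in precisely the problematic case $G[v]=\emptyset$ (an infimum over the empty set), and the claim that the directional derivative then ``fails to be linear'' is an assertion of the very thing to be proved --- a priori a singular $\omega\in X^{**}\setminus X$ (e.g.\ a ``mass at infinity'' functional on $L^\infty$, which exists) could represent a perfectly linear directional derivative, and ruling this out requires a genuine analysis of $t\mapsto E[v+th]$ for potentials whose behavior at infinity is nontrivial. So your write-up matches the paper wherever the paper's proof is complete, correctly isolates a point the paper glosses, but the sketch offered for that point is a plan rather than a proof.
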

\begin{proof}
  Proof of \ref{thm:E-reg:1}: $E$ is everywhere finite. In order to apply Theorem~\ref{thm:finite-cont}, we need to show that $E$ is locally bounded above at \emph{some} point in $X^*$, and we choose $v = 0$. The proof is adapted from Ref.~\cite{Lieb1983}.

  Let $\|v\|_{X^*} < L/3$, where $L$ is the constant in the Sobolev embedding of $H^1(\R^3)$ in $L^6(\R^3)$ ($\|\nabla f\|_2^2 \geq L \|f\|_6^2$). Write $v = u + w$ with $u\in L^{3/2}$ and $w \in L^\infty$, and note that $\|u\|_{3/2} + \|w\|_\infty < L/3$. Using the fact that the two-electron repulsion operator is positive, we get
  \begin{equation}
    \begin{split}
      E[v] &= \inf_{\psi\in \mathcal{W}_N} \frac{1}{2}\|\nabla\psi\|_2^2 + \braket{\psi,W\psi} + \int \rho_\psi v \\
          &\geq \inf_{\psi\in\mathcal{W}_N}  \frac{1}{2}\|\nabla\psi\|_2^2 + \int \rho_\psi v \\
          &\geq \inf_{\psi\in\mathcal{W}_N} \frac{1}{2}\|\nabla\psi\|_2^2 - N \|w\|_\infty - \|\rho_\psi^{1/2}\|^2_6\|u\|_{3/2} \\
          &\geq \inf_{\psi\in\mathcal{W}_N} \frac{1}{2}\|\nabla\psi\|_2^2 - N \|w\|_\infty - L^{-1}\|\nabla\rho^{1/2}\|^2_2\|u\|_{3/2} \,.
    \end{split}
  \end{equation}
  Using Eq.~\eqref{eq:rhoH1-estimate}, we get
  \begin{equation}
    E[v] \geq \inf_{\psi\in\mathcal{W}} \frac{1}{2}\|\nabla\psi\|^2_2 - \frac{1}{3}NL - \frac{1}{3}\Big(\frac{1}{2N}\|\nabla\psi\|_2^2 + g\Big) \geq -\frac{1}{3}NL-\frac{1}{3}g\,,
  \end{equation}
  where $g = \int G^{-1}|\nabla G|<+\infty$, $G(\bfr) = \exp(-|\bfr|^2)$.
  Hence, $E$ is locally bounded above at $0\in X^*$.

  Proof of \ref{thm:E-reg:2}: 
  Since $E$ is continuous, Theorem~\ref{thm:singleton} tells us that $E$ is G\^ateaux differentiable at $v \in X^*$ if and only if the superdifferential is a singleton. It follows from Proposition~\ref{prop:ground-states} that ${\supdiff} E[v]$ is the convex hull of all densities $\rho_\psi$ of all ground-state wavefunctions of $\hat{H}[v]$. Thus, the superdifferential is a singleton if and only if all ground-state densities are the same.
\end{proof}

The universal functional $F$ is quite badly behaved. Our discussion is mostly based on Ref.~\cite{Lammert2007}, in which many more details can be found. Since $F$ is only defined on elements $\rho \in X$ for which $\int\rho = N$, it is clear that $F$ cannot be G\^ateaux differentiable, since any change in the particle number $\int \rho$ would give infinities. On the other hand, it could be differentiable in a more restricted sense, e.g., we could consider $F$ as a function on the mentioned affine space and study directional derivatives, or even on the smaller space $\operatorname{aff}\operatorname{dom} F$ (the affine hull of the effective domain). The following discussion shows that $F$ is singular in these cases, too.
\begin{theorem}\label{thm:Fdom}
  Let $X_N = \{\rho \in X \mid \int \rho = N \}$, an affine closed space of codimension 1. Let $X_N^+ \subset X_N$ be the subset of those elements that satisfy $\rho\geq 0$ a.e.
  \begin{enumerate}
    \item\label{thm:Fdom:1} The map $F : X \to \R\cup\{+\infty\}$ has effective domain $\mathcal{D}^N$, i.e., $F[\rho] < +\infty$ if and only if $\rho\in\mathcal{D}^N$.
    \item\label{thm:Fdom:2} $\mathcal{D}^N$ is dense in $X_N^+$.
    \item\label{thm:Fdom:3} $\operatorname{aff} \mathcal{D}^N$ has empty algebraic interior in $\operatorname{aff} X_N^+ = X_N$. 
    The algebraic interior of a subset $A \subset X_+$ consists of the points $\rho \in A$ such that, for
every line $\ell \subset X_N$ through $\rho$, $\ell \cap A$ contains a line segment with $\rho$ in its interior.  \end{enumerate}
\end{theorem}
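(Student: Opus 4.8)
\medskip
\noindent\emph{Proof plan.}
The plan is to treat the three claims separately, in order of increasing effort.

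Claim~\ref{thm:Fdom:1} is essentially already in hand. Since we have identified $F = F_\text{DM}$ and the effective domain of $F_\text{DM}$ was recorded to be $\mathcal{D}^N$, we obtain $F[\rho] < +\infty$ if and only if $\rho \in \mathcal{D}^N$ at once. The only point to add is that $F$ does not take the value $-\infty$ on $\mathcal{D}^N$: putting $v=0$ in the inequality $E[v] \le F[\rho] + \int v\rho$ gives $F[\rho] \ge E[0] > -\infty$, so $F$ is genuinely real-valued there.

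For Claim~\ref{thm:Fdom:2} I would reduce the approximation of $\rho$ to the approximation of its square root. Given $\rho \in X_N^+$, put $\phi = \rho^{1/2}$; then $\phi \ge 0$, $\|\phi\|_2^2 = N$, and $\phi \in L^6(\R^3)$ because $\rho \in L^3(\R^3)$. The key elementary estimates are the H\"older bounds $\|\psi^2 - \phi^2\|_1 \le \|\psi-\phi\|_2\,\|\psi+\phi\|_2$ and $\|\psi^2-\phi^2\|_3 \le \|\psi-\phi\|_6\,\|\psi+\phi\|_6$, which show that it suffices to approximate $\phi$ by nonnegative $H^1$ functions in the $L^2\cap L^6$ norm. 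This is standard: truncate $\phi$ to a large ball and mollify, $\psi_{R,\delta} = (\phi\,\1_{B_R})*\eta_\delta$; mollification of a nonnegative function is nonnegative, $\psi_{R,\delta}\in C_c^\infty \subset H^1(\R^3)$, and $\psi_{R,\delta}\to\phi$ in every $L^p$ with $p<\infty$, in particular in $L^2\cap L^6$. Squaring then yields $\psi_{R,\delta}^2 \to \rho$ in $X = L^1\cap L^3$. Finally I would renormalize, replacing $\psi_{R,\delta}^2$ by $(N/\!\int \psi_{R,\delta}^2)\,\psi_{R,\delta}^2 \in \mathcal{D}^N$; since the integrals tend to $N>0$, the scaling factor tends to $1$ and the $X$-limit is unchanged. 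This exhibits $\rho$ as an $X$-limit of elements of $\mathcal{D}^N$.

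Claim~\ref{thm:Fdom:3} is the substantive one, and the decisive input is the regularity fact already established inside the proof of Theorem~\ref{thm:N-rep}, namely that every $\rho\in\mathcal{D}^N$ lies in $W^{1,1}(\R^3)$. I would first verify $\operatorname{aff} X_N^+ = X_N$: given $\rho\in X_N$ with negative part $\rho^-$, choose $c>1$ with $\int\rho^- \le (c-1)N$, pick a nonnegative $\sigma_2\in X$ with $\sigma_2 \ge \rho^-/(c-1)$ and $\int\sigma_2 = N$ (possible since $\tfrac{1}{c-1}\int\rho^-\le N$), and set $\sigma_1 = \tfrac1c\rho + \tfrac{c-1}{c}\sigma_2$; then $\sigma_1,\sigma_2\in X_N^+$ and $\rho = c\sigma_1 - (c-1)\sigma_2$ is an affine combination. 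For the emptiness of the algebraic interior, note that $W^{1,1}(\R^3)$ is a linear subspace, so finite affine combinations of elements of $\mathcal{D}^N$ remain in it; hence $\operatorname{aff}\mathcal{D}^N \subset W^{1,1}(\R^3)\cap X_N$. Now fix any $\rho\in\operatorname{aff}\mathcal{D}^N$ and choose a direction $h = \1_{Q_1} - \1_{Q_2}\in X_0 := \{h\in X : \int h = 0\}$, the difference of indicators of two disjoint unit cubes: then $\int h = 0$ and $h\in L^1\cap L^\infty\subset X$, but $h\notin W^{1,1}(\R^3)$ since an indicator's distributional gradient is a surface measure, not an $L^1$ function. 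Along the line $\ell = \{\rho + th : t\in\R\}\subset X_N$, the point $\rho + th$ with $t\ne 0$ cannot lie in $\operatorname{aff}\mathcal{D}^N$, for otherwise $th = (\rho+th) - \rho \in W^{1,1}(\R^3)$ would force $h\in W^{1,1}(\R^3)$. Thus $\ell\cap\operatorname{aff}\mathcal{D}^N = \{\rho\}$ contains no segment with $\rho$ in its interior, so $\rho$ is not an algebraic interior point; as $\rho$ was arbitrary, the algebraic interior is empty.

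The routine parts are Claims~\ref{thm:Fdom:1} and~\ref{thm:Fdom:2}; the only mild care needed there is handling the two norms of $X$ simultaneously and the renormalization. The conceptual heart is Claim~\ref{thm:Fdom:3}, and I expect the main obstacle to be recognizing that it collapses to a single structural fact, $\mathcal{D}^N\subset W^{1,1}(\R^3)$, after which producing one ``rough'' transverse direction $h\notin W^{1,1}(\R^3)$ at every point finishes the argument.
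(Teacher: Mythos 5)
Your treatment of Claims i) and ii) is correct and essentially identical to the paper's own proof. For i), the paper likewise reads the result off from $F=F_\text{DM}$ together with the fact that $\dom(F_\text{DM})=\mathcal{D}^N$ (your extra observation that $F[\rho]\geq E[0]>-\infty$ is a harmless addition). For ii), the paper uses the same device: mollify the square root, renormalize, and square; your bookkeeping of the two norms via the H\"older estimates in $L^2\cap L^6$ is in fact cleaner than the paper's, which asserts ``$\phi_\epsilon\to\phi$ in $H^1$'' even though $\phi$ need not lie in $H^1$.

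Claim iii) is where you genuinely diverge from the paper, and the divergence matters. Your argument --- $\mathcal{D}^N\subset W^{1,1}(\R^3)$, hence $\operatorname{aff}\mathcal{D}^N\subset W^{1,1}(\R^3)\cap X_N$, and a transverse direction $h=\chi_{Q_1}-\chi_{Q_2}\notin W^{1,1}(\R^3)$ meets the affine hull only at the base point --- is valid, and it does prove the statement as literally worded (the set $\operatorname{aff}\mathcal{D}^N$ has empty algebraic interior in $X_N$); it also suffices for the later inference that $\mathcal{B}_N$ has empty algebraic interior in $X_N$. But the paper's own proof (adapted from Lammert) establishes a different and, for the paper's purposes, stronger fact: for every $\rho_0\in\mathcal{D}^N$ there is a direction $\delta\rho$ that is \emph{parallel to} $\operatorname{aff}\mathcal{D}^N$ --- a difference of two nonnegative functions of equal mass with $H^1$ square roots --- such that $\rho_0+s\,\delta\rho\notin\mathcal{D}^N$ for \emph{every} $s>0$. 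The mechanism there is the positivity constraint: the bumps $-2^{-2n}\sigma(2^n(\cdot-\bfr_n))$ drive the density negative on small balls where $\rho_0<2^{-3n}$, no matter how small $s$ is. So the paper shows that $\mathcal{D}^N$ has empty algebraic interior even along lines lying inside its own affine hull, whereas your direction $h$ is transverse to that hull by construction.

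The distinction is the actual point of the theorem. The discussion preceding it says the aim is to show that directional derivatives of $F$ fail to make sense ``even on the smaller space $\operatorname{aff}\operatorname{dom}F$'', and the remark following it stresses that the construction exploits the requirement $\rho\geq 0$ --- a pathology of the domain within its hull, not a lack of Sobolev regularity relative to the ambient $X_N$. Your argument cannot exclude that $\mathcal{D}^N$ has nonempty algebraic interior \emph{relative to} $\operatorname{aff}\mathcal{D}^N$, i.e.\ that $F$ restricted to the affine hull of its domain admits directional derivatives; indeed your proof would apply verbatim to any subset of $W^{1,1}\cap X_N$, including full affine subspaces on which such derivatives exist everywhere. (Conversely, note that the paper's construction, whose line segments never leave the hull, does not prove the literal wording about $\operatorname{aff}\mathcal{D}^N$ that you prove --- the statement and the paper's proof are not perfectly aligned, and you resolved the ambiguity in the direction that bypasses the substantive content.) To capture what the paper and its subsequent remarks actually use, you need a Lammert-type in-hull direction that exits $\mathcal{D}^N$ via loss of positivity; the rough-transverse-direction device cannot be adapted to do this.
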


\begin{proof}
  Proof of \ref{thm:Fdom:1}: The domain of $F_\text{DM}$ is the convex hull of the domain of $F_\text{LL}$, which is $\mathcal{D}^N$, a convex set.

  Proof of \ref{thm:Fdom:2}: For any $\sigma\in X_N^+$, we must find $\rho_\epsilon \in \mathcal{D}^N$ such that $\|\rho_\epsilon -\sigma\|\to 0$ as $\epsilon\to 0$. This can be done using standard mollification arguments: Let $g_\epsilon \in \mathcal{C}^\infty_c(\R^3)$ be a mollifier, and define $\phi = \rho^{1/2}/N^{1/2}$, $\psi_\epsilon = g_\epsilon * \phi$, $\phi_\epsilon = \psi_\epsilon/\|\psi_\epsilon\|$, $\rho_\epsilon = N \phi_\epsilon^2$.
  Now, $\phi_\epsilon \in H^1$, $\|\phi_\epsilon\|^2_2 = 1$, and moreover $\phi_\epsilon \geq 0$ a.e., so $\rho_\epsilon \geq 0$ a.e., and hence $\rho_\epsilon \in \mathcal{D}^N$. Furthermore, $\phi_\epsilon \to \phi$ in $H^1$, and $\|\rho_\epsilon-\rho\|_3 \leq C \|\phi_\epsilon - \phi\|^2_{H^1}$ by the same Sobolev embedding used in Theorem~\ref{thm:N-rep}. Similarly, $\|\rho_\epsilon - \rho\|_1 \to 0$.

  Proof of \ref{thm:Fdom:3}, adapted from Ref.~\cite{Lammert2007}: Let $\rho_0 \in \mathcal{D}^N$ be given. We need to find a line segment $\{\rho_0 + s\delta\rho\mid s \in [0,1]\} \in \operatorname{aff} \mathcal{D}^N$ such that $\rho_0 + s\delta\rho \notin \mathcal{D}^N$ for any $s > 0$.

  Let $\sigma \in \mathcal{C}^\infty(\R^3)$ be such that $\sigma(\bfr) \in [0,1]$, $\sigma(\bfr)=0$ outside the ball around the origin with unit volume, $\sigma(\bfr) \geq 1/2$ inside the ball around the origin with volume $1/2$ (implying that $\int\sigma \geq 1/4$). Furthermore, we require that $\sigma^{1/2} \in H^1$. Find a sequence $B_n$ of balls of volume $\mu(B_n) = 2^{-n}$ with centers $\bfr_n$ satisfying $|\bfr_n - \bfr_m| \geq 1$ for all $n,m$. Additionally we require that the measure of $\{\rho_0 \geq 2^{-3n}\}\cap B_n \leq \mu(B_n)/4 = 2^{-n-2}$ (by taking a subsequence if necessary). Such a sequence exists, since otherwise $\rho_0$ cannot have finite integral.

  We now define
  \[ \delta\rho(\bfr) = \tau(\bfr) - \sum_n 2^{-2n} \sigma(2^n(\bfr- \bfr_n))\,, \]
  where $\tau$ is smooth, nonnegative and with support that does not intersect any of the $B_n$, such that $\int\delta\rho = 0$. Moreover, require that $\tau^{1/2} \in H^1$. Now, $\delta\rho\in\operatorname{aff}\mathcal{D}^N$.

  On any given $B_n$, there is a region (in the inner part) of positive measure where
  \[ \rho(\bfr) \leq 2^{-3n} - s 2^{-2n - 1}. \]
  Clearly, for any given $s >0$ we can take $n$ sufficiently large so that these values are negative. Thus, $\rho \notin \mathcal{D}^N$ for any $s>0$.
\end{proof}

\begin{remark}
  The construction of the direction $\delta\rho$ that immediately exits $\mathcal{D}^N$ exploits the requirement that $\rho\geq 0$ almost everywhere, and thus a pathology of the \emph{domain} as opposed to the behavior of $F$ \emph{on the domain}. It is a fact that  the restriction of $F$ to $\mathcal{D}^N$ (with the $X$-topology) is everywhere discontinuous in the sense that we can construct a sequence $\rho_n$ in $\mathcal{D}^N$ which is $X$-convergent to some $\rho \in X_N^+$, but for which $F[\rho_n]\to+\infty$. Such a construction is outlined in Ref.~\cite{Lammert2007}.  This also indicates that the topology of $X$ is not really that well suited for DFT, since unlike $F$, the chosen topology on $X$ is insensitive to density gradients.
\end{remark}

\subsection{The conjecture of Hohenberg and Kohn}

The last discussion in this section pertains to the structure of ${\subdiff} F[\rho]$, and this is the domain of the Hohenberg--Kohn theorem~\cite[Theorem 3.2]{Hohenberg1964,Lieb1983}: That the external potential $v$ is a unique function of the density $\rho$, up to a constant shift. This is a very appealing notion, because if $\rho$ determines $v$, then it determines also $\hat{H}[v]$ and hence the ground-state wavefunction $\psi$, and hence all physical observables as functions $\Omega[\rho]$. The system density is elevated to a basic variable, replacing the wavefunction as the state parameter of the quantum mechanics of $N$-electron systems. Unfortunately, the Hohenberg--Kohn theorem as originally stated is a \emph{conjecture}, since its proof has one step which is not rigorous: One divides the Schr\"odinger equation by $\psi$ pointwise, but this requires the unique continuation property, see \cite[Theorem 3.2]{Lieb1983}. At the time of writing it is still a partially open question if the Hohenberg--Kohn conjecture is true for potentials $v \in X^*$, although Garrigue has established the unique continuation property for potentials $v \in L^p_\text{loc}(\R^3)$ with $p>2$, which include the Coulomb potentials. See also the discussion by Lammert~\cite{Lammert2018}.

\begin{conjecture}[Hohenberg--Kohn]\label{conj:hk}
  The density $\rho \in \mathcal{D}^N$ determines the potential $v\in X^*$ up to an addive constant, i.e., we have either
  \[ {\subdiff} F[\rho] = \{ -v + \mu \mid \mu \in \R \}\quad \text{for some $v\in X^*$}\,, \]
  or
  \[ {\subdiff} F[\rho] = \emptyset\,, \]
  whenever there are no potentials for which $\rho\in \mathcal{D}^N$ is a ground-state density.
\end{conjecture}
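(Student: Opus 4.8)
The plan is to unravel what membership in $\subdiff F[\rho]$ actually means, reduce the statement to the classical assertion that two potentials sharing a common ground-state density differ only by a constant, and then confront the one genuinely non-rigorous step — the unique continuation property — which is precisely what keeps this a conjecture rather than a theorem.

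First I would translate the subdifferential into physical terms. By Proposition~\ref{prop:subdiff}\ref{prop:subdiff:3} together with Proposition~\ref{prop:ground-states}\ref{prop:ground-states:2}, the condition $-v \in \subdiff F[\rho]$ is equivalent to $E[v] = F[\rho] + \int v\rho$, and hence to the existence of a density operator $\gamma \in G[v]$ with $\gamma \mapsto \rho$; that is, $-v \in \subdiff F[\rho]$ exactly when $\rho$ is the density of some (possibly mixed) ground state of $\hat{H}[v]$. In particular $\subdiff F[\rho] = \emptyset$ precisely when no potential admits $\rho$ as a ground-state density, which disposes of the second alternative tautologically. For the first, two elementary observations take care of the ray structure: the subdifferential of a convex function is always convex, and since replacing $v$ by $v+\mu$ merely shifts $\hat{H}[v]$ by $\mu N$ and leaves every ground-state density unchanged, $-v \in \subdiff F[\rho]$ forces $-v+\mu \in \subdiff F[\rho]$ for all $\mu \in \R$. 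Thus $\subdiff F[\rho]$, when nonempty, always contains the full ray $\{-v+\mu\}$, and the entire content of the conjecture is that it contains nothing outside one such ray.

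Next I would run the classical Hohenberg--Kohn double inequality in the density-operator formulation. Suppose $-v, -v' \in \subdiff F[\rho]$, and let $\gamma, \gamma'$ be ground states of $\hat{H}[v], \hat{H}[v']$ respectively, each with density $\rho$. Using $\hat{H}[v] - \hat{H}[v'] = V - V'$ and $\gamma' \mapsto \rho$,
\[ E[v] \le \Tr(\hat{H}[v]\gamma') = \Tr(\hat{H}[v']\gamma') + \int (v-v')\rho = E[v'] + \int (v-v')\rho\,, \]
and symmetrically $E[v'] \le E[v] + \int (v'-v)\rho$. Adding the two forces equality throughout, so $\Tr(\hat{H}[v]\gamma') = E[v]$: the ground state $\gamma'$ of $\hat{H}[v']$ is simultaneously a ground state of $\hat{H}[v]$, and vice versa. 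This is the clean, fully rigorous part of the argument, relying only on the variational characterisation of $E$ and designed so that it survives degeneracy and genuinely mixed $\gamma, \gamma'$.

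Finally I would try to conclude $v - v' \in \R$ from the existence of a common ground state, and here lies the main obstacle. Decomposing $\gamma' = \sum_k \lambda_k \ket{\psi_k}\bra{\psi_k}$, each $\psi_k$ sits at the bottom of the spectrum of both $\hat{H}[v]$ and $\hat{H}[v']$, so $(V-V')\psi_k = (E[v]-E[v'])\psi_k$, i.e.
\[ \sum_{i=1}^N \big(v(\bfr_i)-v'(\bfr_i)\big)\,\psi_k = \big(E[v]-E[v']\big)\,\psi_k\,. \]
To divide by $\psi_k$ pointwise and obtain $\sum_i (v-v')(\bfr_i) = E[v]-E[v']$ almost everywhere, one needs $\psi_k \neq 0$ a.e.\ — the unique continuation property. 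Granting UCP, fixing $\bfr_2,\dots,\bfr_N$ and letting $\bfr_1$ vary shows $(v-v')(\bfr_1)$ equals a constant, so $v-v'=\text{const}$ and the conjecture follows. The hard part, and the reason the statement remains a conjecture, is exactly establishing UCP for the full class $v \in X^* = L^{3/2}(\R^3)+L^\infty(\R^3)$: the required non-vanishing of ground states is known only under stronger local integrability, such as Garrigue's result for $v \in L^p_\text{loc}(\R^3)$ with $p>2$, which covers the physically relevant Coulomb case but not all of $X^*$.
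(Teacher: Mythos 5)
Your proposal takes essentially the same route as the paper: the statement is deliberately left as a \emph{conjecture} there, and the paper's surrounding discussion (citing Lieb's Theorem 3.2 and Garrigue's unique-continuation result) identifies exactly the gap you flag — the pointwise division of the Schr\"odinger equation by the ground-state wavefunction, which requires the unique continuation property and remains open for general $v \in X^* = L^{3/2}(\R^3)+L^\infty(\R^3)$. Your reduction via the subdifferential characterization, the ray structure under constant shifts, and the two-potential double inequality is the standard Hohenberg--Kohn argument the paper has in mind, and your explicit acknowledgment that UCP is the one missing ingredient matches the paper's stated reason for not promoting the conjecture to a theorem.
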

The concept of $v$-representability receives a lot of attention in the DFT literature.
\begin{definition}\label{def:v-rep}[Ensemble $v$-representability]
  We say that $\rho \in X$ is \emph{(ensemble) $v$-representable} if there is $v \in X^*$ such that $\rho=\rho_\gamma$ for a $\gamma \in G[v]$, i.e., that $\gamma$ is a ground-state density operator of $v$. Equivalently,
  \[ E[v] = F[\rho] + \int v \rho\,, \]
  or, $-v\in {\subdiff} F[\rho]$, or $\rho \in {\supdiff} E[v]$. The set of (ensemble) $v$-representable densities is thus $\mathcal{B}_N := \dom({\subdiff} F)$ (defined as those $\rho\in X$ for which ${\subdiff} F[\rho]\neq\emptyset$).
\end{definition}
\begin{remark}
  There herein introduced concept of (ensemble) $v$-representability is slightly less general than the one used in cassical DFT literature. Here, $\rho \in L^1$ is (ensemble) $v$-representable if it is the ground-state density of ``some potential'' $v : \R^3\to \R$, with the function space otherwise unspecified. For example, a gaussian density is the ground-state density of a harmonic potential (for $N=1$), but this potential is not in $X^*$. Lammert~\cite{Lammert2010} distinguishes between (ensemble) $v$-representability and (ensemble) $X^*$-representability, which is identical to our concept.
\end{remark}

One of the classical problems of DFT is to characterize the set $\mathcal{B}_N$, as this is the effective domain of the classical Hohenberg--Kohn functional $F_{\text{HK}}$. The motivation for this study has been that since one wishes to differentiate $F_\text{HK}$, the set $\mathcal{B}_N$ needs to be sufficiently well-behaved. In particular, it needs to have an algebraic interior for directional derivatives to make sense. Since we have $F \leq F_\text{HK}$ and in particular $\dom(F_\text{HK})\subset \dom(F)$, the algebraic interior of $\mathcal{B}_N$ must be empty. However, we can say the following based on the Brøndsted--Rockafellar theorem~\cite[Theorem 16.45]{Bauschke2011}, which implies that for $f\in\Gamma_0(B)$ (with $B$ a Banach space), $\dom(\subdiff f)$ is dense in $\dom(f)$.
\begin{proposition}
  $\mathcal{B}_N$ is dense in $\mathcal{D}^N$.
\end{proposition}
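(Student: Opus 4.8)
The plan is to obtain the proposition as an immediate application of the Brøndsted--Rockafellar theorem, exactly in the form quoted in the paragraph preceding the statement: for any $f \in \Gamma_0(B)$ over a Banach space $B$, the set $\dom(\subdiff f)$ is norm-dense in $\dom(f)$. The only work is to check that $F$ falls under the hypotheses and to match up the two sets appearing in the conclusion. First I would verify that $F \in \Gamma_0(X)$ with $X$ a Banach space. This has already been established in the excerpt: $X = L^1(\R^3)\cap L^3(\R^3)$ is a Banach space, and $F$, being defined by the skew-conjugation formula~\eqref{eq:E-to-F}, is automatically convex and lower semicontinuous; moreover it is proper, since its effective domain $\mathcal{D}^N$ is nonempty by Theorem~\ref{thm:N-rep}. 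Hence $F \in \Gamma_0(X)$.

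Next I would identify the two sets in the conclusion with the domain and the subdifferential-domain of $F$. By Theorem~\ref{thm:Fdom}\ref{thm:Fdom:1}, the effective domain of $F$ is exactly $\mathcal{D}^N$, so $\dom(F) = \mathcal{D}^N$. By Definition~\ref{def:v-rep}, the set of ensemble $v$-representable densities is precisely $\mathcal{B}_N = \dom(\subdiff F)$, the collection of $\rho \in X$ at which the subdifferential $\subdiff F[\rho]$ is nonempty.

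Finally I would invoke the cited theorem with $f = F$ and $B = X$. This yields at once that $\mathcal{B}_N = \dom(\subdiff F)$ is dense in $\dom(F) = \mathcal{D}^N$, which is the assertion. Note that this dovetails with Theorem~\ref{thm:Fdom}\ref{thm:Fdom:3}: $\mathcal{B}_N$ is dense in $\mathcal{D}^N$ even though $\operatorname{aff}\mathcal{D}^N$ has empty algebraic interior, so the $v$-representable densities form a topologically large but geometrically thin subset.

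There is no genuine obstacle at this level, since the substantive content is packaged inside the Brøndsted--Rockafellar theorem. The only point demanding a moment of care is the topology: ``dense'' here refers to the $X$-norm $\max(\|\cdot\|_1,\|\cdot\|_3)$, and it is precisely norm-density (not merely density in a weaker topology) that the cited theorem delivers. Were one to insist on a self-contained argument instead of citing the theorem, the real labour would lie in proving Brøndsted--Rockafellar itself, i.e.\ approximating a given point of $\dom F$ by nearby points carrying $\epsilon$-subgradients and then upgrading these approximate subgradients to exact ones via Ekeland's variational principle; but with the theorem available as a reference, the proposition follows directly.
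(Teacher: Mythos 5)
Your proposal is correct and follows exactly the paper's own argument: invoke the Brøndsted--Rockafellar theorem for $F \in \Gamma_0(X)$ and identify $\mathcal{B}_N = \dom(\subdiff F)$ and $\mathcal{D}^N = \dom(F)$. The extra care you take in verifying properness, lower semicontinuity, and the norm topology is sound but is already implicit in the paper's one-line proof.
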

\begin{proof}
  Simply note that $\mathcal{B}_N = \dom({\subdiff} F)$ and $\mathcal{D}^N = \dom(F)$ .
\end{proof}

The Hohenberg--Kohn conjecture is often taken to be the foundation of DFT in the sense that it implies the existence of a universal Hohenberg--Kohn functional $F_{\text{HK}}$, but the present author considers this a red herring: Our developments so far does not rely on it, and there is actually not much extra to gain from elevating the conjecture to a theorem, since the mapping $\rho \mapsto v$ would be very ill-behaved, see the discussion by Lammert in Ref.~\cite{Lammert2007}. First, not all $v\in X^*$ have ground states. Second, not all $\rho \in \mathcal{D}^N$ can be ground-state densities (``$v$-representable''), and we have little knowledge of how the one-dimensional affine space ${\subdiff} F[\rho]$ changes with $\rho$.

 %
 %
 %
 %
 %
 %
 %
 %

\subsection{Some remarks on exact DFT}

The previous section sets up exact DFT as a convex optimization problem, demonstrating a symmetry between the ground-state energy map $E[v]$ and the ``universal'' functional $F[\rho]$. The framework differs significantly from the ``traditional'' treatment that was initiated by the publications of Hohenberg, Kohn, and Sham~\cite{Hohenberg1964,Kohn1965}.

The framework has both strengths and weaknesses from the point of view formulating the quantum physics of $N$-electron systems in external fields. First, it is a mathematically rigorous formulation of exact DFT that pinpoints some of the deficiencies of the ``traditional'' formulation of DFT, in terms of a well-established body of results.

Second, the potential space $X^*$ feels both unnecessary large and too small at the same time, containing not only the physical Coulomb potentials, but also a host of ``wild'' potentials that would never appear in actual physical problems, while lacking some obvious interesting potentials, like the harmonic oscillator. At the same time, the constrained-search formula~\eqref{eq:Ev-LL} is valid for the latter, which pose no particular difficulties for quantum theory. Actually, Eq.~\eqref{eq:Ev-LL} is valid for much more general potentials. For example, for $v = v_+ - v_- \in X^*$ with $v_\pm \in X^*$ almost everywhere positive, one may modify $v_+$ or $v_0$ (but not both) to arbitrary elements of $L^1_\text{loc}(\R^3)$ (even if we may not have a link to a self-adjoint $\hat{H}[v]$ in such cases). It seems fortuitous that $X^*$ happens to contain the most interesting potentials, namely those of the Coulomb potentials, as $X$ is chosen for its capacity to hold $\mathcal{D}^N$.

Third, the usual norm topology on $X$ is not so suitable for DFT. It seems difficult to introduce a ``well-behaved'' Banach space of densities with a dual that contains correspondingly ``mostly'' interesting potentials. Thus, the standard convex analytic setting could be refined to allow for a better description of exact DFT.

Finally, it seems hard to connect the above treatment of exact DFT with the development of density-functional approximations. Indeed, it is not clear the existing functional approximations are at all approximations to $F_\text{DM}$ (and one may suspect that they are not).

\hilite{MOVE AND/OR EXPAND?}

At this point, it is worthwhile to mention alternative formulations of DFT, in particular the coarse-grained formulation of Lammert~\cite{Lammert2006,Lammert2010}. Coarse-graining can be motivated from (a) that experimental resolution is not infinite, and (b) that the nucleus is not a point particle, so that the singular Coulomb potential is not even exact. The space $\R^3$ is covered by a disjoint set of cells of uniformly bounded volume, such as a rectangular grid of uniform cells, and one considers equivalence classes $\boldsymbol{\rho}$ of those $\rho \in \mathcal{D}^N$ that have the same average in each cell. The potentials are taken to be constant over each cell. In this setting, every strictly positive $\boldsymbol{\rho}$ is shown to be ensemble $v$-representable. Moreover, the universal functional becomes much more well-behaved, in particular it is G\^ateaux differentiable. Furthermore, Lammert demonstrates that certain limits conforming with Lieb's theory are obtained as the resolution is increased. In summary, Lammert's coarse-grained DFT represents a regularization alternative to Moreau--Yosida regularization which we study Section~\ref{sec:my-dft}

 %
 %
 %
 %
 %
 %
 %

\subsection{Box truncated exact DFT}
\label{sec:box}

One basic problem with the spaces $X$ and $X^*$ is that they are nonreflexive, and Moreau--Yosida regularization has the most powerful effect in a Hilbert space setting. We therefore consider the $N$-electron problem in a open, connected and bounded domain $\Omega \subset \R^3$, assuming Dirichlet boundary conditions on the Schr\"odinger equation. All the results from exact DFT carry over to this situation. On the other hand, the finite domain allows some simplifications and stronger statements. In particular, the the long-range part of Coulomb potentials, being the source of the nonreflexiveness, disappear. In fact, we may now include harmonic potentials and other unbounded potentials into the class of external potentials. Moreover, every Hamiltonian $\hat{H}[v]$ will have a ground state.

We begin with a simple lemma.
\begin{lemma}\label{lemma:l2l1} Let $X(\Omega) := L^1(\Omega)\cap L^3(\Omega)$.
  Then $X(\Omega) \subset L^2(\Omega)$, for $\Omega$ bounded or unbounded, and also and $L^2(\Omega) \subset X(\Omega)'$. For $\Omega$ bounded, we have $L^2(\Omega) \subset L^1(\Omega)$, with continuous embedding. Furthermore, $L^2(\Omega) \subset X(\Omega)^*$ with continuous embedding.
\end{lemma}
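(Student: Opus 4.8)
The key observation is that $L^2$ lies between $L^1$ and $L^3$ on the interpolation scale (since $1<2<3$), so the whole lemma should reduce to Hölder's inequality together with Cauchy--Schwarz. The plan is to prove the four assertions in sequence, all flowing from a single interpolation bound, and to keep track of which constants depend on $\Omega$.

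First I would establish $X(\Omega)\subset L^2(\Omega)$ for arbitrary $\Omega$. Writing $|f|^2 = |f|^{1/2}\,|f|^{3/2}$ and applying Hölder's inequality with the conjugate pair $(2,2)$ gives
\[ \|f\|_2^2 = \int |f|^{1/2}\,|f|^{3/2} \leq \Big(\int |f|\Big)^{1/2}\Big(\int |f|^3\Big)^{1/2} = \|f\|_1^{1/2}\,\|f\|_3^{3/2}, \]
so that $\|f\|_2 \leq \|f\|_1^{1/4}\|f\|_3^{3/4} \leq \max(\|f\|_1,\|f\|_3) = \|f\|_X$. This yields both the set inclusion $X(\Omega)\subset L^2(\Omega)$ and the continuous embedding with constant $1$, independently of whether $\Omega$ is bounded.

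Next, to obtain $L^2(\Omega)\subset X(\Omega)'$ I would dualize this bound. Given $g\in L^2(\Omega)$, the map $f\mapsto \int fg$ is well defined and linear on $X(\Omega)$, and by Cauchy--Schwarz combined with the inequality just proved,
\[ \Big| \int fg \Big| \leq \|f\|_2\,\|g\|_2 \leq \|g\|_2\,\|f\|_X. \]
Hence $g$ induces a bounded functional with $\|g\|_{X^*}\leq \|g\|_2$, which simultaneously proves $L^2(\Omega)\subset X(\Omega)'$ and the continuous embedding $L^2(\Omega)\subset X(\Omega)^*$ claimed at the end (here $X(\Omega)'$ and $X(\Omega)^*$ denote the same continuous dual); again no hypothesis on $\Omega$ is needed.

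Finally, for the bounded case I would use finiteness of the measure: when $\mu(\Omega)<\infty$, Hölder's inequality applied to $|f| = |f|\cdot 1$ gives $\|f\|_1 \leq \|f\|_2\,\|1\|_2 = \mu(\Omega)^{1/2}\|f\|_2$, i.e.\ $L^2(\Omega)\subset L^1(\Omega)$ with continuous embedding. Since the lemma is entirely elementary, I expect no genuine obstacle; the only point worth care is the role of $\Omega$ in the constants. The interpolation constant in the first step is $1$ and hence independent of $\Omega$, so the embeddings into $L^2$ and into $X(\Omega)^*$ are uniform and continuous for every domain, whereas the $L^2\subset L^1$ embedding genuinely needs the finite-measure hypothesis, its constant $\mu(\Omega)^{1/2}$ degenerating as $\mu(\Omega)\to\infty$.
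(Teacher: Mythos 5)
Your proof is correct, but on two of the three assertions it takes a genuinely different route from the paper. For $X(\Omega)\subset L^2(\Omega)$ the paper does not interpolate: it truncates, writing $\|u\|_2^2=\int_{|u|\le 1}|u|^2+\int_{|u|>1}|u|^2\le\int_{|u|\le 1}|u|+\int_{|u|>1}|u|^3<+\infty$, which yields only the set inclusion, whereas your Lyapunov inequality $\|f\|_2\le\|f\|_1^{1/4}\|f\|_3^{3/4}\le\|f\|_X$ upgrades it to a continuous embedding with constant $1$. For $L^2(\Omega)\subset X(\Omega)^*$ the paper uses the chain $L^2(\Omega)\subset L^{3/2}(\Omega)\subset X(\Omega)^*$, whose first inclusion requires $\Omega$ to have finite measure and whose second is immediate from the identification $X^*=L^{3/2}+L^\infty$ with the infimum norm; your dualization of the interpolation bound via Cauchy--Schwarz, $\left|\int fg\right|\le\|g\|_2\|f\|_X$, bypasses the intermediate space and proves the continuous embedding with constant $1$ for \emph{arbitrary} $\Omega$, which is strictly more than the lemma claims (and is consistent with the sum-space picture, since any $g\in L^2$ splits as $g\chi_{\{|g|>1\}}+g\chi_{\{|g|\le 1\}}\in L^{3/2}+L^\infty$ on any domain). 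Only the bounded-domain embedding $L^2(\Omega)\subset L^1(\Omega)$ is handled identically in both proofs, by H\"older against the constant function. What the paper's route buys is that it stays inside the concrete description of $X^*$ as the sum space it identified earlier; what yours buys is explicit constants and independence of the domain. One point worth a sentence in your write-up: to read $\|g\|_{X^*}\le\|g\|_2$ as an inclusion $L^2\subset X^*$ you implicitly use that the map $g\mapsto(f\mapsto\int fg)$ is injective, which holds because $X(\Omega)$ contains indicators of finite-measure sets; this is trivial but should be stated, and it is not a gap.
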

\begin{proof}
  Let $u \in X(\Omega)$, i.e., $\|u\|_1$ and $\|u\|_3$ are both finite.
  \[ \|u\|_2^2 = \int_{|u|\leq 1} |u|^2 + \int_{|u|>1} |u|^2 \leq \int_{|u|\leq 1} |u| + \int_{|u| > 1} |u|^3 < +\infty\,. \]
  Thus $u \in L^2(\Omega)$.
  It is a standard fact that $L^q(\Omega) \subset L^p(\Omega)$ for $1 \leq p < q < +\infty$ with continuous embedding for bounded $\Omega$, proven by a simple application of Hölder's inequality. In particular, $L^2(\Omega)\subset L^1(\Omega)$. Morevover, $L^2(\Omega) \subset L^{3/2}(\Omega) \subset X(\Omega)^*$, and since $\|u\|_{3/2} \leq \|u\|_{X^*(\Omega)}$, the embedding is continuous.
\end{proof}
The significance of Lemma~\ref{lemma:l2l1} is that it makes sense to consider $F_\text{DM}$ as a function of $\rho \in L^2(\Omega)$, since $\mathcal{D}^N(\Omega) \subset L^2(\Omega)$ (with an obvious definition of $\mathcal{D}^N(\Omega)$):
\begin{proposition}
  The functional $F_\text{DM} : L^2(\Omega) \to \R\cup\{+\infty\}$ is lower semicontinuous.
\end{proposition}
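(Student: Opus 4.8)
The plan is to deduce lower semicontinuity on the Hilbert space $L^2(\Omega)$ from two facts already in hand: that $F_\text{DM}=F$ is \emph{convex}, and that it is lower semicontinuous on $X(\Omega)=L^1(\Omega)\cap L^3(\Omega)$. The essential manoeuvre is to upgrade strong $L^2(\Omega)$-convergence to \emph{weak} $X(\Omega)$-convergence along energy-bounded sequences. Concretely, I would take $\rho_n\to\rho$ in $L^2(\Omega)$, set $L:=\liminf_n F_\text{DM}[\rho_n]$, and assume $L<+\infty$ (otherwise there is nothing to prove). Passing to a subsequence I may assume $F_\text{DM}[\rho_n]\to L$ with $F_\text{DM}[\rho_n]\le L+1$ for all $n$.

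The next step extracts regularity from this energy bound. By Theorem~\ref{thm:expvalued} each $\rho_n$ admits a minimizing density operator $\gamma_n=\sum_k\lambda_{k}\ket{\psi_k}\bra{\psi_k}\mapsto\rho_n$ with $\Tr(\hat{H}[0]\gamma_n)=F_\text{DM}[\rho_n]$. Since $\hat{H}[0]$ carries no external potential and the interaction term $\langle\psi_k,W\psi_k\rangle$ is nonnegative, the kinetic energy $T[\gamma_n]=\tfrac12\sum_k\lambda_k\|\nabla\psi_k\|_2^2$ satisfies $T[\gamma_n]\le F_\text{DM}[\rho_n]\le L+1$. The von Weizsäcker-type estimate underlying Eq.~\eqref{eq:rhoH1-estimate} (extended to mixed states exactly as in the convexity argument of Theorem~\ref{thm:N-rep}) then gives $\|\nabla\rho_n^{1/2}\|_2^2\le 2T[\gamma_n]$, while $\|\rho_n^{1/2}\|_2^2=\|\rho_n\|_1=N$. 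Hence $\rho_n^{1/2}$ is bounded in $H^1$, and by the Sobolev embedding $H^1\hookrightarrow L^6$ the densities $\rho_n=(\rho_n^{1/2})^2$ are uniformly bounded in $L^3(\Omega)$.

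With this I would establish $\rho_n\rightharpoonup\rho$ weakly in $X(\Omega)$ by testing against an arbitrary $v=f+g\in X(\Omega)^*=L^{3/2}(\Omega)+L^\infty(\Omega)$. On the bounded domain $L^2(\Omega)\subset L^1(\Omega)$ continuously (Lemma~\ref{lemma:l2l1}), so $\rho_n\to\rho$ strongly in $L^1$, which handles the $L^\infty$-part: $\int f\rho_n\to\int f\rho$. For the $L^{3/2}$-part, a sequence bounded in the reflexive space $L^3(\Omega)$ and convergent in $L^1$ must converge weakly in $L^3$ to the same limit $\rho$ (any weak-$L^3$ limit of a subsequence agrees with $\rho$ on test functions in $L^\infty\cap L^{3/2}$), whence $\int g\rho_n\to\int g\rho$ for all $g\in L^{3/2}$. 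Combining the two gives $\int v\rho_n\to\int v\rho$ for every $v\in X(\Omega)^*$, i.e. weak $X(\Omega)$-convergence. Finally, since $F_\text{DM}$ is convex and norm-lower-semicontinuous on $X(\Omega)$, its sublevel sets are convex and norm-closed, hence weakly closed \cite{Barbu2012}; thus $F_\text{DM}$ is weakly sequentially lower semicontinuous, and $F_\text{DM}[\rho]\le\liminf_n F_\text{DM}[\rho_n]=L$, as required.

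The point demanding care—and the reason the statement is not a triviality—is precisely the topology mismatch: because $X(\Omega)\hookrightarrow L^2(\Omega)$ is continuous, $L^2$-convergence is \emph{weaker} than $X$-convergence, so $L^2$-lower semicontinuity is a genuinely stronger property than the $X$-lower semicontinuity we begin with. The device that closes this gap is the coercivity of the energy in $\|\nabla\rho^{1/2}\|_2$: it confines energy-bounded sequences to an $L^3$-bounded set and thereby converts strong $L^2$ (equivalently $L^1$) convergence into weak $X$-convergence, against which a convex $X$-lsc functional is automatically lower semicontinuous. I expect the only genuinely technical point to be the clean transfer of the von Weizsäcker bound to mixed states and the Sobolev embedding on the (merely open, connected, bounded) domain $\Omega$, for which zero-extension of the Dirichlet square roots to $\R^3$ is the safest route.
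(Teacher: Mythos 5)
Your proof is correct, but it takes a genuinely different---and considerably heavier---route than the paper's. The paper's own proof is two lines: since $\Omega$ is bounded, $L^2(\Omega)$-convergence implies $L^1(\Omega)$-convergence (Lemma~\ref{lemma:l2l1}), and the paper then invokes lower semicontinuity of $F_\text{DM}$ \emph{as a functional on} $L^1(\Omega)$, which is the form in which Lieb's Theorem 4.4 \cite{Lieb1983} is proved; so the liminf inequality is immediate and no topology mismatch ever arises, because $L^1$-lsc is a stronger property than both $L^2$- and $X$-lsc. You instead start from the weaker input that $F_\text{DM}=F\in\Gamma_0(X(\Omega))$ (convexity plus norm-lsc on $X(\Omega)$ only) and close the resulting gap with a coercivity/compactness argument: positivity of $W$ turns the energy bound into a kinetic-energy bound, the Hoffmann--Ostenhof-type estimate $\|\nabla\rho_n^{1/2}\|_2^2\leq 2T[\gamma_n]$ extended to mixed states plus the Sobolev embedding gives a uniform $L^3(\Omega)$ bound, strong $L^1$ convergence together with $L^3$ boundedness upgrades to weak $X(\Omega)$-convergence, and Mazur's lemma (convex, norm-closed sublevel sets are weakly closed) finishes. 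Each of these steps is sound, including the subsequence-uniqueness argument identifying the weak $L^3$ limit with $\rho$. What your route buys is independence from the precise topology in Lieb's theorem---you need only the convex-analytic fact that $F_\text{DM}$ lies in $\Gamma_0(X(\Omega))$, which makes the mechanism (coercivity converts strong convergence in a weak norm into weak convergence in the strong norm) transparent. The price is real, though: the mixed-state von Weizs\"acker bound is proved in the paper only for pure states and binary convex combinations (Theorem~\ref{thm:N-rep}), so the extension to countable mixtures needs an additional, if routine, limiting argument, and the zero-extension of the Dirichlet square roots to $\R^3$ must be supplied; the paper's choice of invoking the $L^1$-version of Lieb's result bypasses all of this.
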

\begin{proof}
  Suppose $\rho_n\to\rho$ in $L^2(\Omega)$. Then $\rho_n\to \rho$ in $L^1(\Omega)$ by Lemma~\ref{lemma:l2l1}, and $F[\rho]\leq \liminf_n F_\text{DM}[\rho_n]$ by lower semicontinuity of $F_\text{DM}: L^1(\Omega)\to\R\cup\{+\infty\}$.
\end{proof}
The second consequence of Lemma~\ref{lemma:l2l1} is that it makes sense to consider the ground-state energy of the operator $\hat{H}[v]$ for $v\in L^2(\Omega)$, i.e., the map $E : L^2(\Omega) \to \R$ is meaningful. It is also readily seen that Coulomb potentials are in $L^2(\Omega)$. We summarize as a theorem:
\begin{theorem}\label{thm:inabox}
  For $\Omega\subset\R^3$ bounded, $E : L^2(\Omega)\to\R$ and $F_\text{DM} : L^2(\Omega)\to\R\cup\{+\infty\}$ form a skew-conjugate pair of functionals. Moreover, for any $v \in L^2(\Omega)$ there exists a $\rho \in L^2(\Omega)$ such that $E[v] = F_\text{DM}[\rho] + \braket{v,\rho}_2$ (i.e., ${\supdiff} E[v] \neq\emptyset$).
\end{theorem}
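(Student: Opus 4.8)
For $\Omega \subset \mathbb{R}^3$ bounded, $E : L^2(\Omega) \to \mathbb{R}$ and $F_{\text{DM}} : L^2(\Omega) \to \mathbb{R} \cup \{+\infty\}$ form a skew-conjugate pair, and for every $v \in L^2(\Omega)$ there is a $\rho \in L^2(\Omega)$ with $E[v] = F_{\text{DM}}[\rho] + \langle v, \rho \rangle_2$ (equivalently $\overline{\partial}E[v] \neq \emptyset$).

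Let me think about how I would prove this.

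**Two parts.** The statement has two distinct claims: (1) skew-conjugacy of the pair over $L^2(\Omega)$, and (2) attainment of the infimum / nonemptiness of the superdifferential for every $v \in L^2(\Omega)$.

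**Part (1) — skew-conjugacy.** The earlier framework already establishes skew-conjugacy over the pair $(X, X^*)$. Now $\Omega$ is bounded, so by Lemma on $L^2$-embeddings, $L^2(\Omega)$ sits continuously inside both the density space $X(\Omega)$ and the potential space $X(\Omega)^*$. Since $L^2(\Omega)$ is its own dual (it's a Hilbert space), the plan is to transfer the conjugacy. I would need to show that restricting the sup/inf defining skew-conjugation from $X^*$ (resp. $X$) down to $L^2(\Omega)$ does not change the extremal values. The key point: the relevant densities all lie in $\mathcal{D}^N(\Omega) \subset L^2(\Omega)$, so restricting the density search to $L^2(\Omega)$ loses nothing in computing $E = F_{\text{DM}}^\vee$. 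For the reverse, $F_{\text{DM}} = E^\wedge$, one must check the supremum over potentials can be taken over the smaller set $L^2(\Omega) \subset X^*(\Omega)$ without decreasing it; this uses that $L^2(\Omega)$ is dense in $X^*(\Omega)$ in the appropriate sense together with continuity/semicontinuity, and the already-established lower semicontinuity of $F_{\text{DM}}$ on $L^2(\Omega)$. I expect this part to be mostly bookkeeping once the density of $L^2(\Omega)$ in the relevant spaces and the embeddings are invoked.

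**Part (2) — attainment, the crux.** This is the substantive claim and I expect it to be the main obstacle. On a bounded domain every $\hat{H}[v]$ has a genuine ground state (discrete spectrum at the bottom, by compact resolvent from Dirichlet boundary conditions on a bounded $\Omega$), so a minimizing $\gamma \in G[v]$ exists and is nonempty. I would argue: for $v \in L^2(\Omega)$, the quadratic form is form-bounded below, the infimum $E[v]$ is attained by some ground-state density operator $\gamma$, its density $\rho = \rho_\gamma$ lies in $\mathcal{D}^N(\Omega) \subset L^2(\Omega)$, and then Proposition on ground states (the characterization $E[v] = F[\rho] + \int v\rho$ iff some $\gamma \in G[v]$ has $\gamma \mapsto \rho$) gives exactly $E[v] = F_{\text{DM}}[\rho] + \langle v, \rho\rangle_2$. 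By the subdifferential equivalence (Proposition~\ref{prop:subdiff}\ref{prop:subdiff:3}), this is the statement $\rho \in \overline{\partial}E[v]$.

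**Where the difficulty lies.** The real work is establishing that $\hat{H}[v]$ has a ground state for \emph{every} $v \in L^2(\Omega)$ on the bounded domain — i.e., that the bottom of the spectrum is an eigenvalue. I would lean on: (a) $v \in L^2(\Omega) \subset L^{3/2}(\Omega) + L^\infty(\Omega)$ so the KLMN machinery from Section~\ref{sec:exact-dft-1} applies and the form $\mathcal{E}_0 + \mathcal{V}$ is closed, semibounded, with form domain $H^1_N(\Omega)$; (b) the embedding $H^1_N(\Omega) \hookrightarrow L^2_N(\Omega)$ is \emph{compact} because $\Omega$ is bounded (Rellich–Kondrachov), which upgrades the semiboundedness to discreteness of the spectrum below the essential spectrum and yields a minimizer. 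The cleanest route is a direct method: take a minimizing sequence $\psi_n \in \mathcal{W}_N(\Omega)$, extract an $H^1_N$-weakly convergent subsequence using the coercivity estimate (the $\|\nabla\psi\|_2^2$ control from the proof of Theorem~\ref{thm:E-reg}), pass to the limit using weak lower semicontinuity of $\mathcal{E}_0$ and compactness of the embedding to handle the potential term $\int \rho_\psi v$, and verify the weak limit is normalized (no mass escapes, precisely because $\Omega$ is bounded). This compactness — unavailable on all of $\mathbb{R}^3$ — is exactly what makes the box truncation pay off, and it is the step I would treat most carefully.
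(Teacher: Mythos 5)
Your proposal is correct, and its core -- existence of a ground state for every $v \in L^2(\Omega)$, extracted from compactness on the bounded domain -- is exactly the crux of the paper's proof; but the two arguments cash in that compactness differently. The paper argues operator-theoretically: it cites Kato's theorem to identify the operator domain as $H^2_{N,0}(\Omega)$, then invokes Rellich--Kondrachov to conclude that the Hamiltonian has compact resolvent, hence purely discrete spectrum, so a ground-state eigenvector exists and its density is the desired minimizer. You instead run the direct method of the calculus of variations (minimizing sequence, weak $H^1$ compactness from the KLMN form bound, strong $L^2$ convergence via Rellich--Kondrachov to preserve normalization, weak lower semicontinuity of $\mathcal{E}_0$); this is self-contained, avoids Kato's regularity theorem and spectral theory altogether, and makes visible precisely where boundedness of $\Omega$ enters, at the cost of having to justify convergence of the potential term $\int \rho_{\psi_n} v$ (which needs a small interpolation step: $\rho_{\psi_n}$ bounded in $L^3(\Omega)$ and convergent in $L^1(\Omega)$, hence convergent in $L^2(\Omega)$). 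For the skew-conjugacy half, the paper treats it as essentially immediate: once the preceding proposition gives lower semicontinuity of $F_\text{DM}$ on $L^2(\Omega)$, one has $F_\text{DM}\in\Gamma_0(L^2(\Omega))$ and the Fenchel--Moreau biconjugation machinery (Theorem~\ref{thm:legendre-fenchel}, Proposition~\ref{prop:legendre-fenchel2}) delivers the pair in one stroke; your transfer argument -- restricting the potential supremum from $X(\Omega)^*$ to the subset $L^2(\Omega)$ and appealing to density plus norm continuity of $E$ -- does work, but it is more roundabout than the biconjugation route, which is the very reason the paper proves lower semicontinuity on $L^2(\Omega)$ beforehand.
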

\begin{proof}
  We only need to prove existence of a ground-state density for any $v \in L^2(\Omega)$.  For $v\in L^2(\Omega)$, the self-adjoint $N$-electron Hamiltonian associated with the Hamiltonian quadratic form has domain $H^2_{N,0}(\Omega)$~\cite[Theorem 1]{Kato1951}. The Rellich--Kondrachov theorem implies that this Hamiltonian has a compact resolvent, and thus a purely discrete spectrum. In particular, the ground-state energy along with a ground-state eigenvector $\psi \in H^1_{N,0}(\Omega)$ exists for any external potential $v \in L^2(\Omega)$, and thus $\rho_\psi$ is a minimizer for $E[v]=\inf_{\rho} F_\text{DM}[\rho]+\braket{v,\rho}_2$.
\end{proof}
One may ask, why not use the unbounded domain $\Omega = \R^3$ when considering $L^2(\Omega)$ as the density and potential space? In this case convergence in $L^2$ does not imply $L^1$ convergence, and one opens up the possibility that that $F_\text{DM}$ is not lower semicontinuous, so that the Lieb functional $F$ is strictly different from $F_\text{DM}$. This is, at the very least, a conceptual problem. Indeed, from Ref~\cite{Kvaal2015} we have the following:

\begin{theorem} Let $E : L^2(\R^3) \to \R$ be defined by
\[ E[v] = \inf_{\rho \in L^2(\R^3)} \left\{ F_\text{DM}[\rho] + \braket{v,\rho} \right\}, \]
and let $F = E^\vee$. 
\begin{enumerate} 
\item  $E[v] \leq 0$. If $v\geq 0$ almost everyhere, then $E[v] = 0$.
\item $F[0] = 0$, and hence $F_\text{DM}$ is not lower semicontinuous.
\item $0 \in \underline{\partial} F[0]$ and $0 \in \overline{\partial} E[0]$, but there are no ground-state wavefunctions for the Hamiltonian $\hat{H}[0]$.
\end{enumerate}
\end{theorem}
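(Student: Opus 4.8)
The plan is to reduce everything to a single scaling construction and then read off the three claims in order, using only the conjugation machinery already established. Throughout, recall that $L^2(\R^3)$ is its own dual, that $E = F_\text{DM}^\wedge$ and $F = E^\vee = (F_\text{DM}^\wedge)^\vee$, and that $F_\text{DM}\geq 0$ because $\mathcal{E}_0 = \tfrac12\|\nabla\psi\|_2^2 + \braket{\psi,W\psi}$ is a sum of nonnegative terms.

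First I would prove (i). Fix any convenient $\psi\in\mathcal{W}_N$ with finite kinetic and interaction energy, e.g.\ a Slater determinant of smooth, compactly supported orbitals, and introduce the spatial dilations $\psi_\lambda(\bfx_1,\dots,\bfx_N) := \lambda^{3N/2}\psi(\lambda\bfx_1,\dots,\lambda\bfx_N)$ for $\lambda>0$. A change of variables gives $\|\psi_\lambda\|_2 = 1$, $\tfrac12\|\nabla\psi_\lambda\|_2^2 = \lambda^2\,\tfrac12\|\nabla\psi\|_2^2$, and $\braket{\psi_\lambda,W\psi_\lambda} = \lambda\braket{\psi,W\psi}$, so $\mathcal{E}_0[\psi_\lambda]\to 0$ as $\lambda\to 0^+$. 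The associated density is $\rho_{\psi_\lambda}(\bfr) = \lambda^3\rho_\psi(\lambda\bfr)$, which preserves $\|\rho_{\psi_\lambda}\|_1 = N$ but disperses in $L^2$: $\|\rho_{\psi_\lambda}\|_2^2 = \lambda^3\|\rho_\psi\|_2^2\to 0$. Hence $F_\text{DM}[\rho_{\psi_\lambda}]\leq\mathcal{E}_0[\psi_\lambda]\to 0$, while Cauchy--Schwarz gives $|\braket{v,\rho_{\psi_\lambda}}|\leq\|v\|_2\|\rho_{\psi_\lambda}\|_2\to 0$ for any fixed $v\in L^2(\R^3)$. Plugging this sequence into the infimum yields $E[v]\leq 0$. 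For the second half, the effective domain of $F_\text{DM}$ consists of nonnegative densities, so when $v\geq 0$ a.e.\ every admissible term satisfies $F_\text{DM}[\rho]+\braket{v,\rho}\geq 0$, forcing $E[v] = 0$.

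Next, (ii) follows quickly. Since $F = E^\vee$, I have $F[0] = \sup_v\{E[v]-\braket{v,0}\} = \sup_v E[v]$, which by (i) equals $0$ (attained at $v = 0$). For the failure of lower semicontinuity, note that $F = (F_\text{DM}^\wedge)^\vee = F_\text{DM}^{**}$ is, by Theorem~\ref{thm:envelope}, the largest convex lower semicontinuous minorant of $F_\text{DM}$. As $F_\text{DM}$ is convex, were it also lower semicontinuous we would have $F = F_\text{DM}$; but $F[0] = 0$ whereas $F_\text{DM}[0] = +\infty$, because the zero function has integral $0\neq N$ and so lies outside $\mathcal{D}^N$. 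This contradiction shows $F_\text{DM}$ is not lower semicontinuous. For (iii), I would use the optimality equivalence of Proposition~\ref{prop:subdiff}, namely $-v\in\subdiff F[\rho]\iff\rho\in\supdiff E[v]\iff E[v] = F[\rho]+\braket{v,\rho}$; taking $\rho = 0$, $v = 0$ and invoking $E[0] = 0 = F[0]$ verifies the identity and delivers both $0\in\subdiff F[0]$ and $0\in\supdiff E[0]$ at once. Finally, a ground state of $\hat H[0]$ would be a normalized $\psi\in\mathcal{W}_N$ with $\mathcal{E}_0[\psi] = E[0] = 0$; as $\mathcal{E}_0$ is a sum of nonnegative terms this forces $\nabla\psi = 0$, i.e.\ $\psi$ constant, which is incompatible with $\psi\in L^2(\R^3)$ on the unbounded domain unless $\psi = 0$, contradicting normalization.

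The only genuine work is the scaling estimate in (i); the rest is bookkeeping with the conjugation machinery. The delicate point, and the main obstacle, is driving $F_\text{DM}[\rho_n]\to 0$ and $\braket{v,\rho_n}\to 0$ simultaneously: mere translation to infinity leaves $F_\text{DM}$ pinned at a positive value, so dilation is essential, and one must check that the dilation both weakens the energy through its $\lambda^2$ and $\lambda$ prefactors and disperses the density in $L^2$ through its $\lambda^3$ prefactor, so that the linear term vanishes against a \emph{fixed} $L^2$ potential.
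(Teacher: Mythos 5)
Your proof is correct, and for part (i) it takes a genuinely different route from the paper's. The paper first reduces to $v \geq 0$ via the splitting $v = v_+ - v_-$ (dropping $-\braket{v_-,\rho} \leq 0$ from the infimum), and then kills the potential term by \emph{translation}: a compactly supported $\psi$ is moved into distant cubes $\Omega_{k,\lambda}$, where $\int_{\Omega_{k,\lambda}} v^2 \to 0$ because $v \in L^2(\R^3)$, so that $\int v\rho_{\psi_k} \to 0$ by Cauchy--Schwarz; dilation enters separately, and the translation argument is repeated at each scale $\lambda$ to send the surviving $\lambda^{-2}\braket{T} + \lambda^{-1}\braket{W}$ to zero. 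You instead observe that dilation alone does both jobs at once: the scaling $\rho_{\psi_\lambda}(\bfr) = \lambda^3\rho_\psi(\lambda\bfr)$ preserves $\|\rho_{\psi_\lambda}\|_1 = N$ but gives $\|\rho_{\psi_\lambda}\|_2^2 = \lambda^3\|\rho_\psi\|_2^2 \to 0$, so the pairing with a \emph{fixed} $v \in L^2$ vanishes by Cauchy--Schwarz while $\mathcal{E}_0[\psi_\lambda] = O(\lambda)$; this handles arbitrary $v \in L^2$ directly, with no sign decomposition and no cube construction. What each buys: your argument is shorter and treats all $v$ uniformly, and it exposes precisely the failure mechanism the theorem is about, namely that the $L^2$ topology is blind to mass spreading out; the paper's version separates the two physical mechanisms (escape to regions where $v$ is negligible, versus relaxation of internal energy by spreading), which is more robust in settings where one controls only local $L^2$ tails of $v$ rather than a global Cauchy--Schwarz pairing. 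Parts (ii) and (iii) of your proof coincide with the paper's: for (ii) both arguments are the comparison $F[0] = 0 < +\infty = F_\text{DM}[0]$ against the biconjugation/envelope theorem, and for (iii) the paper merely writes ``Easy,'' while your verification (the optimality equivalence of Proposition~\ref{prop:subdiff} at $(\rho,v) = (0,0)$, plus the observation that $\mathcal{E}_0[\psi] = 0$ forces $\nabla\psi = 0$, hence $\psi$ constant and not normalizable on an unbounded domain) is exactly the intended filling-in.
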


\begin{proof}
  Proof of i): Writing $v = v_+ - v_-$, with $v_{\pm} \in L^2(\R^3)$ almost everywhere positive, we obtain
  \begin{equation}
    E[v] = \inf_\rho \left\{ F_\text{DM}[\rho] + \braket{v_+ ,\rho} - \braket{v_- ,\rho} \right\} \leq
    \inf_\rho \left\{ F_\text{DM}[\rho] + \braket{v_+,\rho} \right\} = E[v_+].
  \end{equation}
  It is therefore sufficient to show that $E[v_+] \leq 0$. 
  
  Let $v \geq 0$ almost everywhere.
  Let $\lambda>0$ be arbitrary and let $\Omega_{k,\lambda}$ with $k\in \N$ be disjoint 
  cubes of side length $\lambda$ such that $\cup_k \Omega_{k,\lambda} =
  \R^3$. Each $\Omega_{k,\lambda}$ can be obtained by translation of $\Omega_{1,\lambda}$.
  Since $v\in L^2(\R^3)$, 
  \[ \int_{\R^3} \!\! v(\mathbf r)^2 \,\mathrm d \mathbf r =\sum_k
  \int_{\Omega_{k,\lambda}} \!\!\!\!v (\mathbf r)^2 \,\mathrm d \mathbf r< +\infty, \]
  implying that $\int_{\Omega_{k,\lambda}} \!\! v(\mathbf r)^2\,\mathrm d \mathbf r \to 0$ as $k\to\infty$.
  Let $\psi \in \mathcal{W}_N$ have smooth components with support in $\Omega_{1,1}^{N}$ so that
  $\rho_\psi \in \mathcal{C}^\infty_c(\R^3)$ has support contained in $\Omega_{1,1}$. By translating
  $\psi$ (denoting the result by $\psi_k$), we can make the support of $\psi_k$ to be inside
  $\Omega_{k,1}^N$ and
  \begin{equation}
    F_\text{DM}(\rho_{\psi_k}) = F_\text{DM}(\rho_\psi) \leq \left\langle \psi \vert T+W \vert \psi \right\rangle
    \equiv \braket{T} + \braket{W},
  \end{equation}
independently of $k$.
We obtain
  \begin{equation}
    E(v) \leq \inf_k \left(\braket{T} + \braket{W} + \int_{\Omega_{k,1}} \!\!\!\!v(\mathbf r) \,\rho_k(\mathbf r) \,\mathrm d \mathbf r \right) = \braket{T} + \braket{W},
  \end{equation}
  where we have used the fact that
  \begin{equation}
    \int_{\Omega_{k,1}} \!\!\!\!v(\mathbf r)\,\rho_k(\mathbf r)\,\mathrm d\mathbf r 
\leq \left(\int_{\Omega_{k,1}}\!\!\!\!v(\mathbf r)^2\, \mathrm d\mathbf r \right)^{1/2} \, \|\rho\|_2 \to 0
  \end{equation}
  as $k\to\infty$.  We now increase the size of the boxes $\Omega_{k,\lambda}$ by varying $\lambda>1$.
  By dilating $\psi$ in the manner 
  \begin{equation}
    \psi(\vec{r}_1,\cdots) \to \lambda^{-3N/2}
    \psi(\lambda^{-1}\vec{r}_1,\cdots),
  \end{equation}
  the support is still inside $\Omega_{1,\lambda}^N$ and the density
  is scaled as $\rho_\psi(\vec{r})\to
  \lambda^{-3}\rho_\psi(\lambda^{-1}\vec{r})$.
We obtain the scaling
  \begin{equation}
    \braket{T} + \braket{W} \to \lambda^{-2}\braket{T} +
    \lambda^{-1}\braket{W}.
  \end{equation}
  By repeating the above argument for $\lambda=1$ and letting
  $\lambda\to\infty$, we obtain $E[v]\leq 0$. On the other hand, $E[v]
  \geq 0$ since the Hamiltonian $\hat{H}[v]$ is positive, yielding $E[v] = 0$.
  Proof of ii): $F_\text{DM}[0]=+\infty$, but $F[0] = \sup_v E[v] = 0$. Thus $F[0] = (F_\text{DM}^\wedge)^\vee[0] < F_\text{DM}[0]$, and $F_\text{DM} \notin \Gamma_0(L^2(\R^3))$.
  Proof of iii): Easy.
\end{proof}

 %
 %
 %
 %
 %
 %

\section{Moreau--Yosida regularization}
\label{sec:my}

In this section we introduce Moreau--Yosida regularization of convex lower-semicontinuous functions over separable Hilbert spaces. The Moreau--Yosida regularization of a convex optimization problem is \emph{invertible}. Hence, we trade a possibly non-smooth optimization problem for a smooth and indeed quite well-behaved one. The invertibility implies that the solution of the exact problem is connected to the solution of the regularized problem. In this exposition, we present elementary proofs of most statements due to their central nature in this chapter. Most proofs are adapted from the excellent monograph~\cite{Bauschke2011} by Bauschke and Combettes, to which the reader is pointed for further details.

\subsection{The Moreau envelope}

In this section,  $(\Hilb,\braket{\cdot,\cdot})$ is a generic separable real Hilbert space. By the usual identification of $\Hilb$ and $\Hilb^*$ and the identification of weak and weak-$*$ topologies, we have $\Gamma_0(\Hilb) = \Gamma_0^*[(\Hilb)^*]$. Our central objects of study is the Moreau envelope ${}^\epsilon f \in \Gamma_0(\Hilb)$ of a convex lower semicontinuous function $f\in \Gamma_0(\Hilb)$ and the associated proximal mapping $\prox_{\epsilon f}:\Hilb\to\Hilb$, introduced by by Moreau in Ref~\cite{Moreau1965}. The name Yosida is also attached to the formalism: In his proof of the Hille--Yosida theorem in Ref.~\cite{Yosida1948} characterizing the generators of strongly continuous one-parameter semigroups, Yosida introduced a certain approximation of the resolvent of maximal monotone operators $A : \Hilb\to 2^\Hilb$. Moreau's Theorem~\cite{Moreau1965,Showalter1991} connects the two concepts in the case where $A = {\subdiff} f$, see Remark~\ref{rem:moreau}.

\begin{definition}[Moreau envelope]\label{def:moreau}
  Let $\epsilon>0$ be given, and let $f :\Hilb \to \R\cup\{+\infty\}$ with nonempty $\dom(f)$. The \emph{Moreau envelope} ${}^\epsilon f : \Hilb \to \R\cup\{\pm\infty\}$ is defined by infimal convolution by the function $x\mapsto (1/2\epsilon)\|x\|^2$,
  \begin{equation}
    {}^\epsilon f[x] = \min_{z\in\Hilb}\left( f[z] + \frac{1}{2\epsilon}\|x-z\|^2 \right) .\label{eq:envelope}
  \end{equation}
  The unique minimizer Eq.~\eqref{eq:envelope} for $\epsilon=1$ is defined as $\prox_{f} x$, i.e.,
  \begin{equation}
    {}^1 f[x] = f[\prox_{ f} x] + \frac{1}{2}\|x-\prox_{f} x\|^2.\label{eq:prox-def}
  \end{equation}
  The map $\prox_f$ is called \emph{the proximal mapping}, and for general $\epsilon>0$ we have
  \begin{equation}
    {}^\epsilon f[x] = f[\prox_{\epsilon f} x] + \frac{1}{2\epsilon}\|x-\prox_{\epsilon f} x\|^2,\label{eq:prox-def-2}
  \end{equation}
  that is, the unique minimizer is in general $\prox_{\epsilon f}x$.
\end{definition}

\begin{proposition}\label{prop:prox}
  Definition~\ref{def:moreau} makes sense, i.e.,  a minimizer of Eq.~\eqref{eq:envelope} exists and is unique.
\end{proposition}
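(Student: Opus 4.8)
The plan is to fix $x\in\Hilb$ and $\epsilon>0$ and study the objective
\[ g(z) := f[z] + \frac{1}{2\epsilon}\|x-z\|^2, \]
reducing the claim to the assertion that $g$ attains its infimum over $\Hilb$ at exactly one point. First I would record the three structural properties of $g$ that drive everything. It is \emph{proper}, since choosing any $z_0\in\dom(f)$ (nonempty by hypothesis) gives $g(z_0)<+\infty$; it is \emph{lower semicontinuous}, being the sum of the lsc function $f\in\Gamma_0(\Hilb)$ and the continuous map $z\mapsto\tfrac{1}{2\epsilon}\|x-z\|^2$; and it is \emph{convex}.

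Next I would show $\mu:=\inf_{z}g(z)>-\infty$. Because $f\in\Gamma_0(\Hilb)$, its conjugate $f^*$ is proper (Theorem~\ref{thm:legendre-fenchel}), so there is $a\in\Hilb$ with $f^*[a]<+\infty$, giving the continuous affine minorant $f[z]\geq \braket{a,z}-f^*[a]$ for all $z$. Hence
\[ g(z)\geq \braket{a,z}-f^*[a]+\frac{1}{2\epsilon}\|x-z\|^2, \]
and since the quadratic term dominates the linear one, $g$ is bounded below (indeed coercive), so $\mu$ is finite.

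The technical heart is a strong-convexity estimate obtained from the parallelogram law. For $z_1,z_2\in\Hilb$ with midpoint $m=\tfrac12(z_1+z_2)$ one has $\|x-z_1\|^2+\|x-z_2\|^2=2\|x-m\|^2+\tfrac12\|z_1-z_2\|^2$, while convexity of $f$ gives $f[m]\leq\tfrac12(f[z_1]+f[z_2])$. Combining these yields
\[ g(m)\leq \tfrac12\big(g(z_1)+g(z_2)\big)-\frac{1}{8\epsilon}\|z_1-z_2\|^2. \]
This single inequality delivers both conclusions. \emph{Uniqueness}: if $z_1,z_2$ both minimize, the left side is $\geq\mu$ while the right side equals $\mu-\tfrac{1}{8\epsilon}\|z_1-z_2\|^2$, forcing $z_1=z_2$. \emph{Existence}: for a minimizing sequence $(z_n)$ with $g(z_n)\to\mu$, applying the estimate to $z_m,z_n$ and using $g(\text{midpoint})\geq\mu$ shows $\tfrac{1}{8\epsilon}\|z_m-z_n\|^2\leq\tfrac12(g(z_m)+g(z_n))-\mu\to0$, so $(z_n)$ is Cauchy. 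Its limit $\bar z$ satisfies $g(\bar z)\leq\liminf_n g(z_n)=\mu$ by lower semicontinuity, hence $\bar z$ is the unique minimizer, which by definition is $\prox_{\epsilon f}x$.

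I expect the main obstacle to be existence rather than uniqueness: one must ensure simultaneously that $\mu>-\infty$ (precisely where the affine minorant, and hence the full $\Gamma_0$ hypothesis, enters) and that the limit of a minimizing sequence is genuinely a minimizer (where lower semicontinuity is essential). The parallelogram computation is the device that makes the minimizing sequence Cauchy in norm, letting me avoid invoking weak sequential compactness and weak lower semicontinuity; an alternative route would extract a weakly convergent subsequence from the bounded minimizing sequence and appeal to weak lsc of the convex lsc $g$, but the Cauchy argument is cleaner and self-contained.
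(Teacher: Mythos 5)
Your proof is correct, and it takes a genuinely different route from the paper's. The paper argues via weak compactness: it takes a minimizing sequence, notes it is bounded (coercivity), extracts a weakly convergent subsequence using the Hilbert-space (reflexivity) structure, and concludes attainment from sequential weak lower semicontinuity of the convex lsc objective (Mazur's lemma); uniqueness then follows from strict convexity. You instead exploit the parallelogram law to get the quantitative strong-convexity estimate
\[
g\Bigl(\tfrac{z_1+z_2}{2}\Bigr)\;\leq\;\tfrac12\bigl(g(z_1)+g(z_2)\bigr)-\frac{1}{8\epsilon}\|z_1-z_2\|^2,
\]
which yields uniqueness immediately and, applied to a minimizing sequence, shows that sequence is Cauchy, so existence follows from completeness and ordinary (strong) lower semicontinuity. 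Your route buys several things: it avoids weak topologies, Mazur's lemma, and any compactness argument entirely; it makes explicit the finiteness of the infimum via the affine minorant (a point the paper's coercivity claim quietly relies on but does not spell out); and it gives extra information for free, namely that \emph{every} minimizing sequence converges strongly to $\prox_{\epsilon f}x$. What the paper's route buys is generality: the weak-compactness argument survives in reflexive Banach spaces (where, as the paper's own remark notes, the regularization can still be partially developed), whereas your parallelogram identity is genuinely Hilbertian and does not transfer. Both are standard, complete proofs; yours is the classical "direct method with strong convexity" argument familiar from the Hilbert-space projection theorem.
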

\begin{proof}
    The case $\epsilon=1$ defines $\prox_f x$, and we leave it to the reader to show that the minimizer is $\prox_{\epsilon f}$ in general. Let $x\in \Hilb$ be given, and let $h_{x}[z] := f[x] + (1/2\epsilon)\|z-x\|^2$. Let $\{z_n\}\subset \Hilb$ be a minimizing sequence for the infimal convolution at $x$, i.e., $h_x[z_n]\to {}^\epsilon f[x]$. Such a sequence exists since $\dom(f)\neq\emptyset$. The sequence must be bounded, since $h_x[z_n]\to+\infty$ whenever $\|z_n\|\to+\infty$. By taking a subsequence if necessary, we can assume that $z_n$ converges weakly so some $z_*\in \Hilb$. We note that $h_{x}$ is sequentially weakly lower semicontinuous (any lower semicontinuous convex function is sequentially weakly lower semicontinuous by Mazur's Lemma), and thus
    \[ h_{x}[z_*] \leq \liminf_n h_x[z_{n}] = {}^\epsilon f[x]. \]    Thus, the infimum in the infimal convolution is attained at $z_*$.
    To show uniqueness, we use that strictly convex functions have unique minima: Suppose $z_*'$ is a different minimizer. Since $h_{x}$ is strictly convex, $h_x[\lambda z_* + (1-\lambda)z_*'] < \lambda h_x[z_*] + (1-\lambda)h_x[z_*']$ for $\lambda\in(0,1)$. But then $h_x[\lambda z_* + (1-\lambda) z_*'] < {}^\epsilon f[x]$, a contradiction.
\end{proof}
\begin{remark}
    In the proof of Proposition~\ref{prop:prox}, it is used that $\Hilb$ is a Hilbert space (more precisely, reflexivity is used) when picking a weakly convergent subsequence. Thus, in the general nonreflexive setting, the proximal mapping may not be well-defined.
\end{remark}

\subsection{The proximal mapping}

Some important properties of the proximal mapping are the following:
\begin{proposition}\label{prop:prox-prop}
  Let $f \in\Gamma_0(\Hilb)$ and let $\epsilon>0$ be given. The following holds:
  \begin{enumerate}
    \item\label{prop:prox-prop:1} $p = \prox_{\epsilon f} x \iff$ for all $x' \in \Hilb$ we have
    \begin{equation} \epsilon^{-1}\braket{x'-p,x-p} + f[p] \leq f[x'] \, .\label{eq:prox-vari} \end{equation}
          \item\label{prop:prox-prop:1b}
    $p = \prox_{\epsilon f} x \iff \epsilon^{-1}(x-p) \in {\subdiff} f[p]\,$.
    \item\label{prop:prox-prop:2} $\prox_{\epsilon f} : \Hilb \to \Hilb$ is \emph{firmly nonexpansive} \cite[Section 4]{Bauschke2011}, i.e., for all $x,x'\in \Hilb$,
    \[ \|\prox_{\epsilon f} x - \prox_{\epsilon f} x' \|^2 + \| (\Id - \prox_{\epsilon f}) x - (\Id - \prox_{\epsilon f})x'\|^2 \leq \|x-x'\|^2. \]
    In particular $\prox_{\epsilon f}$ and $\operatorname{Id}-\prox_{\epsilon f}$ are both Lipshitz continuous with constant $1$.
    \item\label{prop:prox-prop:3} If $x \in \dom(f)$ and $\epsilon\to 0^+$, then
    \[ \|x - \prox_{\epsilon f} x \|^2 = O(\epsilon) \, .\]
  \end{enumerate}
\end{proposition}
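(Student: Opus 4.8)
The plan is to exploit the minimizing property defining $p_\epsilon := \prox_{\epsilon f} x$ and then to control the only dangerous quantity, namely the value $f[p_\epsilon]$, by a global affine lower bound. First I would invoke the variational inequality of Proposition~\ref{prop:prox-prop}\ref{prop:prox-prop:1} (equivalently, insert the admissible competitor $z=x$ directly into Eq.~\eqref{eq:prox-def-2}, which is legitimate precisely because $x\in\dom(f)$) with the test point $x'=x$. This yields
\[ \epsilon^{-1}\|x-p_\epsilon\|^2 + f[p_\epsilon] \leq f[x], \qquad\text{i.e.}\qquad \|x-p_\epsilon\|^2 \leq \epsilon\bigl(f[x]-f[p_\epsilon]\bigr). \]
If $f[p_\epsilon]$ were bounded below independently of $\epsilon$ we would be finished at once; the substance of the argument is to produce such a bound.

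Here membership in $\Gamma_0(\Hilb)$ is decisive. Since $f\in\Gamma_0(\Hilb)$, its conjugate $f^*$ is again proper (Theorem~\ref{thm:legendre-fenchel}), so I may fix some $y^*\in\dom(f^*)$ and set $c:=-f^*[y^*]\in\R$; the definition of the conjugate, Eq.~\eqref{eq:conjugate}, then gives the global affine minorant $f[z]\geq\braket{y^*,z}+c$ for all $z\in\Hilb$. Applying this at $z=p_\epsilon$ and writing $p_\epsilon = x-(x-p_\epsilon)$, I would obtain
\[ \|x-p_\epsilon\|^2 \leq \epsilon\bigl(f[x]-\braket{y^*,x}-c\bigr) + \epsilon\braket{y^*,x-p_\epsilon} \leq \epsilon K + \epsilon\|y^*\|\,\|x-p_\epsilon\|, \]
where $K := f[x]-\braket{y^*,x}-c \geq 0$ depends on $x$ and $f$ but not on $\epsilon$ (nonnegativity following from the same minorant evaluated at $z=x$).

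Finally, writing $t := \|x-p_\epsilon\|$, the estimate reads $t^2 \leq \epsilon K + \epsilon\|y^*\|\,t$. Absorbing the cross term by Young's inequality, $\epsilon\|y^*\|\,t \leq \tfrac12 t^2 + \tfrac12\epsilon^2\|y^*\|^2$, and rearranging gives $t^2 \leq 2\epsilon K + \epsilon^2\|y^*\|^2$, which is $O(\epsilon)$ as $\epsilon\to 0^+$, as claimed. I expect the only genuinely substantive point to be the affine lower bound of the second paragraph: without convexity together with lower semicontinuity the value $f[p_\epsilon]$ could in principle run off to $-\infty$ as $\epsilon\to 0^+$, and the conclusion would collapse. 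This is exactly where the $\Gamma_0(\Hilb)$ hypothesis is used; the remaining manipulations are elementary.
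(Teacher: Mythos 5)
Your proposal addresses only item \ref{prop:prox-prop:3} of the proposition. Items \ref{prop:prox-prop:1}, \ref{prop:prox-prop:1b} and \ref{prop:prox-prop:2} --- the variational characterization of $\prox_{\epsilon f}$, its subdifferential reformulation, and firm nonexpansiveness --- are not proven at all; indeed you invoke item \ref{prop:prox-prop:1} as an ingredient. That is the genuine gap: as a proof of the stated proposition the attempt is incomplete, and the omitted parts are where the paper spends most of its effort (the forward implication of \ref{prop:prox-prop:1} via a convex-combination and limiting argument, and the addition of the two variational inequalities to obtain \ref{prop:prox-prop:2}). Your parenthetical remark that one can instead insert the competitor $z=x$ into Eq.~\eqref{eq:prox-def-2} does save you from circularity for the one item you prove --- that insertion yields $f[p_\epsilon] + \tfrac{1}{2\epsilon}\|x-p_\epsilon\|^2 \leq f[x]$, which up to a harmless factor of $2$ is all your argument needs --- but it does not discharge the other three items.

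For item \ref{prop:prox-prop:3} itself, your argument is correct, and it is a genuine variant of the paper's. Both proofs hinge on the key point you identify: $f \in \Gamma_0(\Hilb)$ admits a continuous affine minorant, which prevents $f[p_\epsilon]$ from escaping to $-\infty$. The paper takes the minorant from \cite[Theorem 9.19]{Bauschke2011} and then proceeds in two stages: it first shows $\nu := \sup_{\epsilon\in(0,1)}\|\prox_{\epsilon f}x\| < +\infty$ by trapping the proximal points in a bounded sublevel set of $z \mapsto f[z] + \tfrac12\|x-z\|^2$, and only then applies the minorant to conclude. You bypass the boundedness step entirely: writing the minorant's contribution as $\braket{y^*,x-p_\epsilon}$ and absorbing it via Cauchy--Schwarz and Young's inequality gives the quadratic inequality $t^2 \leq \epsilon K + \epsilon\|y^*\|\,t$ directly, hence the explicit bound $t^2 \leq 2\epsilon K + \epsilon^2\|y^*\|^2$. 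This is slightly more economical than the paper's route and produces an explicit constant; obtaining the minorant from properness of $f^*$ (Theorem~\ref{thm:legendre-fenchel}) rather than citing it is equally legitimate. So keep this part as written, but you must still supply proofs of \ref{prop:prox-prop:1}, \ref{prop:prox-prop:1b} and \ref{prop:prox-prop:2} for the proposition to be established.
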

\begin{proof}
  Proof of \ref{prop:prox-prop:1}: Suppose $p = \prox_{f}x$, and let $x'$ be arbitrary. Set $z=\lambda x' + (1-\lambda)p$. We have
  \[ f[p] = \left(\min_{x'\in\Hilb} f[x'] + \frac{1}{2}\|x-x'\|^2\right) - \frac{1}{2}\|x-p\|^2\, , \]
  so that
  \begin{equation*}
    \begin{split}
      f[p] &\leq f[z] + \frac{1}{2}\|x-z\|^2 - \frac{1}{2}\|x-p\|^2 \\
      & \leq \lambda f[x'] + (1-\lambda)f[p] + \frac{1}{2}\|x - p + \lambda(p - x')\|^2 - \frac{1}{2}\|x-p\|^2 \\
      & = \lambda f[x'] + (1-\lambda)f[p] + \frac{1}{2}\lambda^2\|p-x'\|^2 - \lambda\braket{p-x,p-x'}\,.
    \end{split}
  \end{equation*}
  Hence, for $\lambda > 0$,
  \[ f[p] \leq f[x'] + \frac{1}{2}\lambda \|p-x'\|^2 - \braket{p-x,p-x'}. \]
  Since this holds for all $\lambda\in(0,1)$, we obtain Eq.~\eqref{eq:prox-vari} as $\lambda\to 0$, and substituting $\epsilon f$ for $f$. The converse statement is easy: We rearrange Eq.~\eqref{eq:prox-vari}, to get
  \begin{equation*}
    \begin{split}
      f[p] + \frac{1}{2\epsilon}\|x-p\|^2 &\leq f[x'] + \frac{1}{2\epsilon}\|x-p\|^2 + \epsilon^{-1}\braket{x-p,p-x'} \\
      & \leq f[x'] + \frac{1}{2\epsilon}\|x-p\|^2 + \epsilon^{-1}\braket{p-x,p-x'} + \frac{1}{2\epsilon}\|p-x'\|^2 \\
      & = f[x'] + \frac{1}{2\epsilon}\|x-x'\|^2.
    \end{split}
  \end{equation*}
  Since $x'$ was arbitrary, we conclude that $p=\prox_{\epsilon f}x$.

  Proof of \ref{prop:prox-prop:1b}: Follows directly from \ref{prop:prox-prop:1} and the definition of the subgradient.

  Proof of \ref{prop:prox-prop:2}:
  Assume $p = \prox_{\epsilon f} x$ and $p'=\prox_{\epsilon f} x'$. From \ref{prop:prox-prop:1} we get
 \[ \epsilon^{-1}\braket{p'-p,x-p} + f[p] \leq f[p'] \]
 and
 \[ \epsilon^{-1}\braket{p-p',x'-p'} + f[p'] \leq f[p]\,. \]
 Adding these inequalities and rearranging, we get
 \begin{equation}
   \epsilon^{-1}\braket{p-p',x-p - (x'-p')} \geq 0\,. \label{eq:nonexp}
 \end{equation}
 Subtracting $\|x-x'\|^2/2\epsilon$ from each side and rearranging yields the equivalent condition of firm nonexpansiveness,
 \begin{equation}
   \|p-p'\|^2 + \|(x-p) - (x'-p')\|^2 \leq \|x-x'\|^2, \label{eq:nonexp2}
 \end{equation}
 which also shows that $\prox_{\epsilon f}$ and $\operatorname{Id}-\prox_{\epsilon f}$ are both Lipschitz with constant 1.

  Proof of \ref{prop:prox-prop:3}: It is clear from the definition of the Moreau envelope that ${}^\epsilon f[x] \leq f[x]$. Let $\mu = \sup_{\epsilon>0} {}^\epsilon f[x] \leq f[x] < +\infty$, and thus,
  \[ \forall \epsilon>0, \quad f[\prox_{\epsilon f} x] +
  \frac{1}{2\epsilon} \|x - \prox_{\epsilon f} x\|^2 \leq \mu\,. \]
  Consider the map $h_x[z] := f[z] + \|x-z\|^2/2$. Then for all $\epsilon\in(0,1)$, $\prox_{\epsilon f} x$ is in the sublevel set $\{z \in \Hilb \mid|
   h_x[z] < \mu \}$. This set is bounded. Hence $\nu := \sup_{\epsilon \in (0,1)} \|\prox_{\epsilon f} x\| < +\infty$.
  We next use the fact that any $f\in \Gamma_0(\Hilb)$ poseeses a continuous affine minorant~\cite[Theorem 9.19]{Bauschke2011}, i.e., there exists $\omega \in \Hilb$ and $a\in \R$ such that $\braket{\omega, \cdot} + a \leq f$.
  Therefore,
  \[ \mu \geq \braket{\omega,\prox_{\epsilon f} x} + a + \frac{1}{2\epsilon} \|x - \prox_{\epsilon f} x\|^2 \geq -\nu\|\omega\| + a + \frac{1}{2\epsilon} \|x - \prox_{\epsilon f} x\|^2. \]
  Rearranging gives $\|x - \prox_{\epsilon f} x\|^2 \leq 2\epsilon(\mu + \nu\|\omega\|-a) = O(\epsilon)$ as $\epsilon\to 0^+$.
\end{proof}

The Moreau envelope of a function in $\Gamma_0(\Hilb)$ obtains several nice properties, summerized in the following proposition.
\begin{proposition}\label{prop:my}
  Let $f \in\Gamma_0(\Hilb)$ and let $\epsilon>0$ be given. The following holds:
  \begin{enumerate}
    \item\label{prop:my:1} ${}^\epsilon f \in \Gamma_0(\Hilb)$, and $\dom({}^\epsilon f) = \Hilb$.
    \item\label{prop:my:2} For every $\delta > \epsilon$, and for all $x\in \Hilb$,
    \[ \inf f \leq {}^\delta f[x] \leq {}^\epsilon f[x] \leq f[x]\,. \]
    \item\label{prop:my:3}  For every $x \in \Hilb$, ${}^\epsilon f[x] \to f[x]$ from below as $\epsilon\to 0^+$ (even if $x\neq \dom(f)$).
    \item\label{prop:my:4} ${}^\epsilon f$ is Fr\'echet differentiable, with derivative
    \[ \nabla {}^\epsilon f[x] = \epsilon^{-1}(x - \prox_{\epsilon f}x) \,.\]
    The derivative is Lipschitz continuous with constant $\epsilon^{-1}$.
  \end{enumerate}
\end{proposition}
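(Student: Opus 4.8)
The plan is to establish the four claims in order, leaning heavily on the variational characterization of the proximal mapping from Proposition~\ref{prop:prox-prop}. For \ref{prop:my:1}, that $\dom({}^\epsilon f) = \Hilb$ is immediate: for any $x$, plugging any fixed $z_0 \in \dom(f)$ into Eq.~\eqref{eq:envelope} gives ${}^\epsilon f[x] \leq f[z_0] + (1/2\epsilon)\|x - z_0\|^2 < +\infty$, and the affine-minorant argument (as used in the proof of Proposition~\ref{prop:prox-prop}\ref{prop:prox-prop:3}) gives ${}^\epsilon f[x] > -\infty$, so ${}^\epsilon f$ is proper and real-valued everywhere. Convexity follows because the infimal convolution of two convex functions is convex; lower semicontinuity will actually be a consequence of \ref{prop:my:4}, since a Fréchet-differentiable convex function is continuous, hence lower semicontinuous. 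So I would either prove convexity directly from the infimal-convolution structure or simply defer to \ref{prop:my:4}.

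For \ref{prop:my:2}, the monotonicity ${}^\delta f[x] \leq {}^\epsilon f[x]$ for $\delta > \epsilon$ is clear from Eq.~\eqref{eq:envelope}: increasing the regularization parameter only shrinks the quadratic penalty $(1/2\delta)\|x-z\|^2 \leq (1/2\epsilon)\|x-z\|^2$, so the infimum can only decrease. The bound ${}^\epsilon f[x] \leq f[x]$ follows by taking $z = x$ in the minimization, and $\inf f \leq {}^\epsilon f[x]$ holds since the penalty term is nonnegative, so ${}^\epsilon f[x] \geq \inf_z f[z]$. For \ref{prop:my:3}, the convergence ${}^\epsilon f[x] \to f[x]$ from below: the ``from below'' and monotone-increasing-as-$\epsilon\to 0^+$ parts come straight from \ref{prop:my:2}. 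When $x \in \dom(f)$, I would use Proposition~\ref{prop:prox-prop}\ref{prop:prox-prop:3}, which gives $\|x - \prox_{\epsilon f}x\|^2 = O(\epsilon)$, together with lower semicontinuity of $f$ to squeeze $f[\prox_{\epsilon f}x]$ back up to $f[x]$ as $\epsilon \to 0^+$. The general case $x \notin \dom(f)$ (where $f[x] = +\infty$) requires showing ${}^\epsilon f[x] \to +\infty$; here I would argue by contradiction, using that a bounded limit would force $\prox_{\epsilon f}x \to x$ with bounded $f$-values, contradicting lower semicontinuity at a point where $f = +\infty$.

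The main work, and the expected obstacle, is \ref{prop:my:4}. The strategy is to first identify the candidate derivative $g := \epsilon^{-1}(x - \prox_{\epsilon f}x)$ and verify it is the Gâteaux/Fréchet derivative directly from the definition. The clean route is: by Proposition~\ref{prop:prox-prop}\ref{prop:prox-prop:1b}, writing $p = \prox_{\epsilon f}x$ and $p' = \prox_{\epsilon f}x'$, we have $\epsilon^{-1}(x-p) \in \subdiff f[p]$ and $\epsilon^{-1}(x'-p') \in \subdiff f[p']$. Using the subgradient inequalities at $p$ tested against $p'$ and at $p'$ tested against $p$, combined with the Pythagorean-type expansion of $\|x-z\|^2$, I would derive the two-sided estimate
\[
  \braket{g, x' - x} \leq {}^\epsilon f[x'] - {}^\epsilon f[x] \leq \braket{g', x' - x},
\]
where $g' = \epsilon^{-1}(x'-p')$. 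Since $\prox_{\epsilon f}$ is Lipschitz with constant $1$ (Proposition~\ref{prop:prox-prop}\ref{prop:prox-prop:2}), the map $x \mapsto g$ is Lipschitz with constant $\epsilon^{-1}$, so $g' \to g$ as $x' \to x$; squeezing then gives
\[
  {}^\epsilon f[x'] - {}^\epsilon f[x] - \braket{g, x'-x} = o(\|x'-x\|),
\]
which is exactly Fréchet differentiability with $\nabla {}^\epsilon f[x] = g$, and the Lipschitz bound on $\nabla {}^\epsilon f$ is the already-noted Lipschitz bound on $x \mapsto g$. The delicate point is getting the upper inequality ${}^\epsilon f[x'] - {}^\epsilon f[x] \leq \braket{g', x'-x}$ to come out with the right gradient at $x'$ rather than $x$; this is where care with the algebra of $\|x-p\|^2$ versus $\|x'-p'\|^2$ and the correct subgradient inequalities is essential, and I expect that to be the one step needing genuine attention rather than routine manipulation.
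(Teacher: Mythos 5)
Your proposal is correct and follows essentially the same route as the paper: parts \ref{prop:my:1}--\ref{prop:my:2} handled by the infimal-convolution structure with lower semicontinuity deferred to differentiability, part \ref{prop:my:3} via the $O(\epsilon)$ estimate of Proposition~\ref{prop:prox-prop}\ref{prop:prox-prop:3} (whose proof needs only $\sup_\epsilon {}^\epsilon f[x]<+\infty$, so your contradiction argument for $x\notin\dom(f)$ matches the paper's unified treatment) together with lower semicontinuity of $f$, and part \ref{prop:my:4} via exactly the paper's two-sided bound $\braket{\nabla{}^\epsilon f[x],x'-x}\leq{}^\epsilon f[x']-{}^\epsilon f[x]\leq\braket{\nabla{}^\epsilon f[x'],x'-x}$ squeezed using the $1$-Lipschitz property of $\Id-\prox_{\epsilon f}$ from Proposition~\ref{prop:prox-prop}\ref{prop:prox-prop:2}. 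The only cosmetic point is that your Lipschitz bound on $x\mapsto\epsilon^{-1}(x-\prox_{\epsilon f}x)$ should be attributed to the nonexpansiveness of $\Id-\prox_{\epsilon f}$ rather than of $\prox_{\epsilon f}$ itself, which is precisely what firm nonexpansiveness supplies.
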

\begin{proof}
Proof of \ref{prop:my:1}: The domain is already established. We show convexity. With differentiability shown in \ref{prop:my:3}, it follows that ${}^\epsilon f \in \Gamma_0(\Hilb)$. Let $h_{x'}[x] = f[x'] + \|x-x'\|^2/2\epsilon$, which is convex. Thus, $h_{x'}[\lambda x_1 + (1-\lambda)x_2] \leq \lambda h_{x'}[x_1] + (1-\lambda)h_{x'}[x_2]$. Taking the infimum with respect to $x'$ on both sides yields that ${}^\epsilon f$ is convex.

Proof of \ref{prop:my:2}: Easy.

Proof of \ref{prop:my:3}: As in the proof of Proposition~\ref{prop:prox-prop}\ref{prop:prox-prop:3}, set $\mu = \sup_{\epsilon>0} {}^\epsilon f[x]$. From \ref{prop:my:2}, ${}^\epsilon f[x] \to \mu \leq f[x]$ from below as $\epsilon \to 0^+$. Therefore, we can assume that $\mu < +\infty$ and demonstrate that $\lim_{\epsilon\to 0^+} {}^\epsilon f[x] \geq f[x]$. From Proposition~\ref{prop:prox-prop}\ref{prop:prox-prop:3} we get
\[ \lim_{\epsilon\to 0^+} {}^\epsilon f[x] = \lim_{\epsilon\to 0^+} f[\prox_{\epsilon f} x] + \frac{1}{2\epsilon} \|x - \prox_{\epsilon f} x\|^2 \geq \liminf_{\epsilon\to 0^+} f[\prox_{\epsilon f} x] \geq f[x] \,, \]
where we used that $f$ is lower semicontinuous and that $\prox_{\epsilon f} x \to x$.

Proof of \ref{prop:my:4}:
  Let $x,x'\in\Hilb$, $p=\prox_{\epsilon f}x$, $p'=\prox_{\epsilon f}x'$. Using the defintion of $\prox_{\epsilon f}$ and Eq.~\eqref{eq:prox-vari}, we derive two inequalities:
  \begin{equation*}
    \begin{split}
      {}^\epsilon f[x'] - {}^\epsilon f[x] &= f[p'] - f[p] + \frac{1}{2\epsilon} ( \|x'-p'\|^2 - \|x-p\|^2) \\
      & \geq \epsilon^{-1}\braket{p'-p,x-p} + \frac{1}{2\epsilon} ( \|x'-p'\|^2 - \|x-p\|^2) \\
      & = \frac{1}{2\epsilon}\left(\| x' - p' - x + p\|^2 + 2\braket{x'-x,x-p} \right) \\
      & \geq \epsilon^{-1}\braket{x'-x,x-p}.
    \end{split}
  \end{equation*}
  A similar calculation gives the second bound ${}^\epsilon f[x'] - {}^\epsilon f[x] \leq \epsilon^{-1} \braket{x'-x,x'-p'}$.
  Combining these two inequalities gives
  \begin{equation*}
    \begin{split}
    0 &\leq {}^\epsilon f[x'] - {}^\epsilon f[x]  - \epsilon^{-1} \braket{x'-x,x-p} \\&\leq \epsilon^{-1}\braket{x'-x,x'-p'} - \epsilon^{-1} \braket{x'-x,x-p} \\
    &= \epsilon^{-1}\braket{x'-x,(x'-p')-(x-p)}.
  \end{split}
  \end{equation*}
  Using Cauchy--Schwarz and the firm nonexpansiveness condition (Proposition~\ref{prop:prox-prop}\ref{prop:prox-prop:2}), we obtain
  \begin{equation*}
    \begin{split}
    0 &\leq {}^\epsilon f[x'] - {}^\epsilon f[x]  - \epsilon^{-1} \braket{x'-x,x-p} \\&\leq \epsilon^{-1} \|x'-x\|\|(x'-p')-(x-p)\| \leq \epsilon^{-1} \|x'-x\|\left(\|x'-x\|^2 - \|p'-p\|^2\right)^{1/2} \\
    & \leq \epsilon^{-1} \|x'-x\|^2.
  \end{split}
  \end{equation*}
  It follows that
  \begin{equation*}
    \lim_{x'\to x} \|x'-x\|^{-1}( {}^\epsilon f[x'] - {}^\epsilon f[x]  - \epsilon^{-1} \braket{x'-x,x-p}) = 0\,,
  \end{equation*}
  which proves that ${}^\epsilon f$ is Fr\'echet differentiable, with
  \[ \nabla {}^\epsilon f[x] = \epsilon^{-1} (x - \prox_{\epsilon f} x)\,. \]
  Since $\operatorname{Id} - \prox_{\epsilon f}$ has Lipschitz constant 1, $\nabla {}^\epsilon f$ has Lipschitz constant $\epsilon^{-1}$.
\end{proof}

We obtain a simple, but interesting fact from the Fr\'echet derivative from Proposition~\ref{prop:my}\ref{prop:my:4} and the variational characterization pf the proximal mapping in Proposition~\ref{prop:prox-prop}\ref{prop:prox-prop:1b}: While $f \in \Gamma_0(\Hilb)$ may not be differentiable, the gradient of ${}^\epsilon f$ is always a subgradient at $\prox_{\epsilon f} x$.
\begin{corollary}
  Let $f\in \Gamma_0(\Hilb)$, and $\epsilon>0$.
  Then
      $ \nabla {}^\epsilon f[x] \in {\subdiff} f[\prox_{\epsilon f} x] $.
\end{corollary}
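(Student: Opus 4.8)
The plan is to simply combine the two results highlighted just before the corollary's statement: the explicit formula for the Fréchet derivative of the Moreau envelope and the variational/subgradient characterization of the proximal mapping. The proof requires no new estimates and amounts to recognizing that the expression appearing in the gradient formula is literally the expression appearing in the subgradient characterization.

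Concretely, first I would set $p := \prox_{\epsilon f} x$ to fix notation. From Proposition~\ref{prop:my}\ref{prop:my:4} I have the closed-form derivative
\[ \nabla {}^\epsilon f[x] = \epsilon^{-1}(x - p)\,. \]
Next, from Proposition~\ref{prop:prox-prop}\ref{prop:prox-prop:1b}, the defining property $p = \prox_{\epsilon f} x$ is equivalent to the subgradient inclusion
\[ \epsilon^{-1}(x - p) \in {\subdiff} f[p]\,. \]
Since $p = \prox_{\epsilon f} x$ indeed holds by construction, this inclusion is valid.

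Finally I would substitute the gradient formula into the inclusion: the left-hand side $\epsilon^{-1}(x-p)$ is exactly $\nabla {}^\epsilon f[x]$, and the point $p$ is exactly $\prox_{\epsilon f} x$, so the inclusion reads
\[ \nabla {}^\epsilon f[x] \in {\subdiff} f[\prox_{\epsilon f} x]\,, \]
which is the desired statement. There is no genuine obstacle here; the only thing worth flagging is conceptual rather than technical, namely that $f$ itself need not be differentiable (or even subdifferentiable) at $x$, yet the regularized envelope is differentiable everywhere and its gradient at $x$ is always realized as a subgradient of the original $f$ at the \emph{displaced} point $\prox_{\epsilon f} x$. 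This is precisely the content the corollary is meant to emphasize, and it follows at once from the two cited propositions.
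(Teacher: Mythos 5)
Your proposal is correct and is exactly the paper's argument: the corollary is stated without a separate proof precisely because, as the preceding text notes, it follows immediately by combining the gradient formula $\nabla {}^\epsilon f[x] = \epsilon^{-1}(x - \prox_{\epsilon f} x)$ from Proposition~\ref{prop:my}\ref{prop:my:4} with the characterization $p = \prox_{\epsilon f} x \iff \epsilon^{-1}(x-p) \in {\subdiff} f[p]$ from Proposition~\ref{prop:prox-prop}\ref{prop:prox-prop:1b}. Your closing remark about the conceptual point (differentiability of the envelope versus non-differentiability of $f$, with the subgradient realized at the displaced point) also matches the emphasis the paper itself places on this fact.
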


\begin{remark}\label{rem:moreau}
  \emph{Moreau's Theorem} \cite{Moreau1965,Showalter1991} states that, for $f \in\Gamma_0(\Hilb)$,
  \[ {}^\epsilon f[x] = f[J_\epsilon x] + \frac{\epsilon}{2}\|A_\epsilon x\|^2 \]
  is convex and Fr\'echet differentiable with $\nabla {}^\epsilon f[x] = {}^\epsilon A$.
  Here, ${}^\epsilon A = \epsilon^{-1}(\Id - {}^\epsilon J)$ is the Yosida approximation to the resolvent ${}^\epsilon J = (\Id + \epsilon A)^{-1}$ of the maximal monotone operator $A=\subdiff f$. Thus, ${}^\epsilon J = \prox_{\epsilon f}$ (which is the only statement in Moreau's Theorem we have not proved), and we also have the compelling identity ${\subdiff} {}^\epsilon f = {}^\epsilon {\subdiff} f$.
\end{remark}

\subsection{Conjugate of the  Moreau envelope}

We next discuss the skew concave conjugate of the Moreau envelope and its properties.

\begin{proposition}\label{prop:my-conjugate}
  Let $f \in \Gamma_0(\Hilb)$, and let $\epsilon>0$. Then
  \begin{equation}
    ({}^\epsilon f)^\wedge[x] = f^\wedge[x] - \frac{1}{2}\epsilon\|x\|^2,
  \end{equation}
  which is strictly (indeed strongly) concave, and
  \begin{equation}
    {\supdiff}({}^\epsilon f)^\wedge[x] = {\supdiff}f^\wedge[x] - \epsilon x . \label{eq:moreau-conj-subdiff}
  \end{equation}
\end{proposition}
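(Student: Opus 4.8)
The plan is to compute the skew concave conjugate directly from its definition, exploiting the fact that the Moreau envelope is itself an infimal convolution. Writing $q_\epsilon[z] := \tfrac{1}{2\epsilon}\|z\|^2$, so that ${}^\epsilon f[z] = \inf_{w\in\Hilb}\{ f[w] + q_\epsilon[z-w]\}$, I would unfold the definition
\[ ({}^\epsilon f)^\wedge[x] = \inf_{z\in\Hilb}\Big( {}^\epsilon f[z] + \braket{x,z} \Big) = \inf_{w\in\Hilb}\inf_{z\in\Hilb}\Big( f[w] + \tfrac{1}{2\epsilon}\|z-w\|^2 + \braket{x,z} \Big), \]
the interchange of the two infima being unconditionally valid. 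The inner infimum over $z$ is a quadratic-plus-linear problem; substituting $u=z-w$ it reduces to minimizing $u\mapsto \tfrac{1}{2\epsilon}\|u\|^2 + \braket{x,u}$, attained at $u=-\epsilon x$ with value $-\tfrac{\epsilon}{2}\|x\|^2$. This leaves $\inf_w\{f[w]+\braket{x,w}\} - \tfrac{\epsilon}{2}\|x\|^2 = f^\wedge[x] - \tfrac{\epsilon}{2}\|x\|^2$, which is the first claim.

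For the concavity statement, I would note that $f^\wedge$ is concave by Proposition~\ref{prop:legendre-fenchel2}, and that subtracting the strongly convex quadratic $q_\epsilon$ (with modulus $\epsilon$) produces a strongly concave function; concretely, the identity just proved shows $({}^\epsilon f)^\wedge + \tfrac{\epsilon}{2}\|\cdot\|^2 = f^\wedge$ is concave, which is exactly $\epsilon$-strong concavity of $({}^\epsilon f)^\wedge$.

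For the superdifferential identity I would argue directly from Definition~\ref{def:subdiff}, using the exact quadratic expansion $q_\epsilon[x'] - q_\epsilon[x] = \braket{\epsilon x, x'-x} + \tfrac{\epsilon}{2}\|x'-x\|^2$. Unwinding $y \in \supdiff ({}^\epsilon f)^\wedge[x]$ with the formula just proved turns the defining inequality into
\[ f^\wedge[x] + \braket{y+\epsilon x,\, x'-x} + \tfrac{\epsilon}{2}\|x'-x\|^2 \geq f^\wedge[x'] \qquad \forall x'. \]
The inclusion $\supdiff f^\wedge[x] - \epsilon x \subseteq \supdiff({}^\epsilon f)^\wedge[x]$ is then immediate, since if $y+\epsilon x \in \supdiff f^\wedge[x]$ the displayed inequality only gains from the nonnegative term $\tfrac{\epsilon}{2}\|x'-x\|^2$.

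The main obstacle is the reverse inclusion, where the spurious quadratic term must be removed. Here I would use a scaling argument: restricting to the segment $x' = x + t(x''-x)$ with $t\in(0,1)$, applying concavity of $f^\wedge$ to bound $f^\wedge[x+t(x''-x)]$ below by $(1-t)f^\wedge[x] + t\, f^\wedge[x'']$, dividing by $t$, and letting $t\to 0^+$ so that the quadratic term (now carrying a factor $t$) vanishes. This yields $f^\wedge[x] + \braket{y+\epsilon x,\, x''-x}\geq f^\wedge[x'']$ for every $x''$, i.e.\ $y+\epsilon x\in\supdiff f^\wedge[x]$, completing the equality. An alternative to this final step is to pass to subdifferentials via $\supdiff g = -\subdiff(-g)$ and invoke the Moreau--Rockafellar sum rule for $-f^\wedge + q_\epsilon$, which applies with equality because $q_\epsilon$ is finite and continuous on all of $\Hilb$.
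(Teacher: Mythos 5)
Your proof is correct, and the first half is essentially the paper's own argument: both compute $({}^\epsilon f)^\wedge$ by unfolding the infimal convolution, exchanging infima, and reducing to the conjugate of the quadratic (the paper parametrizes the split as $x = x_1 + x_2$ where you substitute $u = z - w$; same computation, same result $f^\wedge[x] - \tfrac{\epsilon}{2}\|x\|^2$). Where you genuinely diverge is the superdifferential identity \eqref{eq:moreau-conj-subdiff}. The paper gets it in one stroke from the sum rule for subdifferentials, citing \cite[Theorem 5.38]{VanTiel1984}: since the quadratic is finite and continuous on all of $\Hilb$, $\supdiff\bigl(f^\wedge - \tfrac{\epsilon}{2}\|\cdot\|^2\bigr)[x] = \supdiff f^\wedge[x] - \epsilon x$ --- which is exactly the ``alternative'' you mention in your last sentence. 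Your primary route instead establishes both inclusions from Definition~\ref{def:subdiff} directly: the easy inclusion because the quadratic remainder $\tfrac{\epsilon}{2}\|x'-x\|^2$ is nonnegative, and the reverse inclusion by restricting to the segment $x' = x + t(x''-x)$, invoking concavity of $f^\wedge$, dividing by $t$, and sending $t \to 0^+$ so the quadratic term vanishes. This is sound, and it is in effect the differentiable-summand case of the sum rule proved by hand. What it buys is self-containment: no appeal to an external theorem whose general proof requires a Hahn--Banach separation argument; the cost is that it is tied to the explicit quadratic expansion, whereas the paper's citation is shorter and would cover any everywhere-continuous convex perturbation. One small point your argument glosses over: subtracting $(1-t)f^\wedge[x]$ and dividing by $t$ presuppose $f^\wedge[x]$ finite; but if $f^\wedge[x] = -\infty$ (it cannot be $+\infty$, and $f^\wedge \not\equiv -\infty$ since $f$ is proper with a continuous affine minorant), then both superdifferentials in \eqref{eq:moreau-conj-subdiff} are empty and the identity holds trivially, so this is cosmetic rather than a gap.
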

\begin{proof}
 Let $g = ({}^\epsilon f)^\wedge$, and let $y \in \Hilb$. We first note that
  \[ {}^\epsilon f[x] =  \inf_{x_1 + x_2 = x} \left\{ f[x_1] + \frac{1}{2\epsilon}\|x_2\|^2 \right\}. \]
  By the definition of the conjugate,
  \begin{equation}
    \begin{split}
      g[y] &= \inf_x \left\{ \braket{y,x} + \inf_{x_1+x_2} f[x_1] + \frac{1}{2\epsilon}\|x_2\|^2  \right\} \\
      & = \inf_x \inf_{x_1+x_2=x} \left\{ \braket{y,x} +  f[x_1] + \frac{1}{2\epsilon}\|x_2\|^2  \right\} \\
      & = \inf_{x_1} \inf_{x_2}\left\{ \braket{y,x_1} + f[x_1] + \braket{y,x_2} + \frac{1}{2\epsilon}\|x_2\|^2 \right\} \\
      & = f^\wedge[y] - \frac{\epsilon}{2}\|y\|^2,
    \end{split}
  \end{equation}
  where we have used that if $\varphi[x]=\|x\|^2/2\epsilon$, then $\varphi^\wedge[y] = - \epsilon \|y\|^2/2$, which is left as an exercise.

  To establish Eq.~\eqref{eq:moreau-conj-subdiff}, we appeal to \cite[Theorem 5.38]{VanTiel1984}, which, in the Hilbert space setting, states that if $f_1,f_2\in\Gamma_0(\Hilb)$ and there is a point in $\dom(f_1)\cap\dom(f_2)$ where $f_1$ is continuous, then ${\subdiff}(f_1+f_2)[x] = {\subdiff} f_1[x] + {\subdiff} f_2[x]$ for every $x\in \Hilb$. (The inclusion ${\subdiff}f_1[x]+{\subdiff}f_2[x]\subset{\subdiff}(f_1+f_2)[x]$ is easy to prove, but the converse inclusion does not hold in general.) Our result is established by noting that $\varphi,-f^\wedge \in -\Gamma_0(\Hilb)$, and that $\varphi$ is everywhere continuous.
\end{proof}

\begin{remark}
    Proposition~\ref{prop:my-conjugate} implies that the Moreau--Yosida regularization is lossless, i.e., for any $\epsilon>0$
    \[ f = \left(({}^\epsilon f)^\wedge + \frac{\epsilon}{2}\|\cdot\|^2\right)^\vee, \]
    an explicit formula for the inverse of the regularization. 
\end{remark}

 %
 %
 %
 %
 %
 %
 %
 %
 %
 %
 %
 %
 %
 %
 %
 %
 %

\section{Moreau--Yosida regularized exact DFT}
\label{sec:my-dft}

We apply Moreau--Yosida regularization to exact DFT in a box domain, as outlined in Section~\ref{sec:exact-dft}. The treatment in this section closely follows that of Ref.~\cite{Kvaal2014}. We describe Moreau--Yosida Kohn--Sham (MYKS) theory, and set up a basic self-consistent field (SCF) iteration, an abstract algorithm for the solution of the Kohn-Sham problem. We describe the Moreau--Yosida Kohn--Sham Optimal Damping Algorithm (MYKSODA) algorithm, and prove a weak convergence result from Ref.~\cite{Laestadius2018b}.

In this section, we assume $\Omega = [-\ell,\ell]^3 \subset \R$ is a finite box, and we let $\Hilb = L^2(\Omega)$ throughout. We omit all specifications of $\Omega$ in symbols like $\mathcal{D}^N(\Omega)$ for brevity. We let the Moreau--Yosida parameter $\epsilon>0$ be fixed unless otherwise stated.

\subsection{Regularized universal and ground-state energy functionals}

We consider the Moreau envelope ${}^\epsilon F : \Hilb \to \R$ and its concave skew conjugate. Thus, the central functionals of this section are
\begin{align}
   {}^\epsilon F[\rho] &= F[\prox_{\epsilon F}\rho] + \frac{1}{2\epsilon} \|\rho - \prox_{\epsilon F}\rho\|^2, \\
   {}^\epsilon E[v] & := E[v] - \frac{1}{2}\epsilon \|v\|^2.
\end{align}
These are related via skew conjugation,
\begin{align}
  {}^\epsilon E[v] &= \min_{\rho\in\Hilb} {}^\epsilon F[\rho] + \braket{v,\rho}, \\
  {}^\epsilon F[\rho] &= \max_{v\in\Hilb} {}^\epsilon E[v] - \braket{v,\rho}.
\end{align}
The original infimum and supremum are a minimum and a maximum, since any $v\in\Hilb$ has a ground state by Theorem~\ref{thm:inabox}, and since ${}^\epsilon E$ is strongly concave.
The map ${}^\epsilon E$ is defined \emph{not} as the Moreau envelope of $E$, but is instead related to the unregularized energy $E$ by an explicit and easy-to-evaluate function of $v \in \Hilb$. By Proposition~\ref{prop:my}, ${}^\epsilon F$ is everywhere Fr\'echet differentiable with $\nabla {}^\epsilon F[\rho] = \epsilon^{-1}(\rho - \prox_{\epsilon F}\rho) \in {\subdiff} F[\prox_{\epsilon F}\rho]$. Thus $\prox_{\epsilon F}\rho \in \mathcal{B}_N$. The converse is also true, which gives the remarkable fact that $\mathcal{B}_N$ is the range of the proximal mapping, so that the $v$-representability problem, i.e., the problem of characterizing those $\rho\in\mathcal{D}^N$ that are ground-state densities of a given $v\in \Hilb$, is traded for the problem of evaluating ${}^\epsilon F$, since differentiating this function is not an issue.
\begin{proposition}
  The proximal mappig $\prox_{\epsilon F} : \Hilb \to \mathcal{B}_N$ is onto.
\end{proposition}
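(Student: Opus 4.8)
The statement asks for surjectivity of $\prox_{\epsilon F}$ onto $\mathcal{B}_N$. Note that the \emph{range} of $\prox_{\epsilon F}$ already lands inside $\mathcal{B}_N$ by the observation made just before the proposition (if $p = \prox_{\epsilon F}\rho$ then $\epsilon^{-1}(\rho-p)\in{\subdiff}F[p]$, so ${\subdiff}F[p]\neq\emptyset$). Hence the only content is the ``onto'' direction, and the plan is to exhibit, for each $p\in\mathcal{B}_N$, an explicit preimage $x\in\Hilb$ with $\prox_{\epsilon F}x = p$, using the variational characterization of the proximal mapping in Proposition~\ref{prop:prox-prop}\ref{prop:prox-prop:1b}.

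First I would unwind the definition: $p\in\mathcal{B}_N = \dom({\subdiff}F)$ means precisely that ${\subdiff}F[p]\neq\emptyset$. So I fix any subgradient $y\in{\subdiff}F[p]$; under the identification $\Hilb^*\cong\Hilb$ in force throughout this section, $y$ is an element of $\Hilb$, and therefore $x := p + \epsilon y$ is a well-defined point of $\Hilb$. Then $\epsilon^{-1}(x-p) = y \in {\subdiff}F[p]$, which by Proposition~\ref{prop:prox-prop}\ref{prop:prox-prop:1b} is exactly equivalent to $p = \prox_{\epsilon F}x$. Since $p\in\mathcal{B}_N$ was arbitrary, every element of $\mathcal{B}_N$ lies in the range, giving surjectivity.

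There is essentially no obstacle here: the whole argument is a direct reading of the subgradient characterization of $\prox_{\epsilon F}$, with Hilbert-space self-duality ensuring that forming $p + \epsilon y$ makes sense. The single point deserving a word of care is the identification of ${\subdiff}F[p]\subset\Hilb^*$ with a subset of $\Hilb$, which is exactly the identification fixed at the beginning of Section~\ref{sec:my}; once that is granted, the construction $x = p + \epsilon y$ and the appeal to Proposition~\ref{prop:prox-prop}\ref{prop:prox-prop:1b} complete the proof.
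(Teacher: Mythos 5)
Your proof is correct and is essentially the same argument as the paper's: both rest entirely on Proposition~\ref{prop:prox-prop}\ref{prop:prox-prop:1b}, the paper packaging it as the set-valued identity $\prox_{\epsilon F}^{-1} = \Id + \epsilon\, {\subdiff} F$ together with $\ran A = \dom A^{-1}$, while you unwind the same fact element-wise by exhibiting the explicit preimage $x = p + \epsilon y$ for $y \in {\subdiff} F[p]$. Your spelled-out version is a perfectly valid (and arguably more self-contained) rendering of the identical idea.
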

\begin{proof}
    That $\prox_{\epsilon F}^{-1} = \Id + \epsilon {\subdiff} F$ comes directly from Proposition~\ref{prop:prox-prop}\ref{prop:prox-prop:1b}.
    By definition of range and domain, $\ran A = \dom A^{-1}$ for any set-valued operator $A : \Hilb \to 2^\Hilb$~\cite[Definition 12.23]{Bauschke2011}, it follows immediately that $\ran( \prox_{\epsilon F}) = \dom (\Id + {\subdiff}F) = \mathcal{B}_N$. \hilite{PROOF COULD AVOID DEF OF INVERSE BY SPELLING OUT...}
\end{proof}
Having established that $\ran(\prox_{\epsilon F}) = \mathcal{B}_N$, we ask what is the preimage of some $\rho_\epsilon \in \mathcal{B}_N$?
The Hohenberg--Kohn Conjecture states that ${\subdiff} F[\rho_0]$ is unique up to an additive constant. Assuming the conjecture to hold, we obtain an interesting property of the proximal mapping for a fixed $\epsilon>0$.
\begin{proposition}
  Let $\rho_0 \in \mathcal{B}_N$ be the ground-state density of $v\in \Hilb$ (which exists since any $v$ has a ground-state in the box-truncated setting). Assume Conjecture~\ref{conj:hk}, i.e.,
  \[ {\subdiff} F[\rho_0] = \{ -v + \mu \mid \mu \in \R \}\, . \]
  Then, $\rho \in \prox_{\epsilon F}^{-1}\rho_0$ is unique up to a constant shift, i.e.,
  \[ \prox_{\epsilon F}^{-1}[\rho_0] = \{ \rho + \nu \mid \nu \in \R  \}\,.\]
\end{proposition}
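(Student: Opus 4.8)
The statement to prove is that, under the Hohenberg--Kohn conjecture, the preimage $\prox_{\epsilon F}^{-1}[\rho_0]$ of a ground-state density $\rho_0$ is exactly the affine line $\{\rho + \nu \mid \nu \in \R\}$ for some $\rho$. The key tool is Proposition~\ref{prop:prox-prop}\ref{prop:prox-prop:1b}, which characterizes the proximal mapping through the subdifferential: $p = \prox_{\epsilon F}x$ if and only if $\epsilon^{-1}(x-p) \in {\subdiff} F[p]$. Since we want the preimage of $\rho_0$, the defining condition is that $\rho \in \prox_{\epsilon F}^{-1}[\rho_0]$ exactly when $\epsilon^{-1}(\rho - \rho_0) \in {\subdiff} F[\rho_0]$.

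**Main steps.**
First I would write out the characterization: $\rho \in \prox_{\epsilon F}^{-1}[\rho_0]$ iff $\epsilon^{-1}(\rho-\rho_0) \in {\subdiff} F[\rho_0]$. Next, I invoke Conjecture~\ref{conj:hk}, which tells us ${\subdiff} F[\rho_0] = \{-v+\mu \mid \mu\in\R\}$, an affine line in $\Hilb$. Substituting, the membership condition becomes $\epsilon^{-1}(\rho - \rho_0) = -v + \mu$ for some $\mu\in\R$, i.e. $\rho = \rho_0 - \epsilon v + \epsilon\mu$. Since $\rho_0$ is the ground-state density of $v$, we have $-v \in {\subdiff} F[\rho_0]$, so choosing $\mu=0$ shows the element $\rho_* := \rho_0 - \epsilon v$ lies in the preimage. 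Now every admissible $\rho$ has the form $\rho = \rho_* + \epsilon\mu$, and since the constant function $\1$ lies in $\Hilb = L^2(\Omega)$ (as $\Omega$ is bounded), the quantity $\nu := \epsilon\mu$ ranges over all of $\R$. Thus $\prox_{\epsilon F}^{-1}[\rho_0] = \{\rho_* + \nu \mid \nu\in\R\}$, which is the claimed affine line.

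**Anticipated obstacle.**
The calculation itself is essentially a direct substitution, so I expect no analytical difficulty; the only subtlety worth flagging is bookkeeping with the additive constant. One must be careful that the constant $\mu$ appearing in the subdifferential really produces a genuine element of $\Hilb$ when multiplied by $\epsilon$ and added to $\rho_*$ — this is exactly where boundedness of $\Omega$ matters, since the constant function is square-integrable only on a finite-measure domain. I would make explicit that the reparametrization $\nu = \epsilon\mu$ is a bijection of $\R$ onto itself, so that the whole line is traced out. The proof therefore reduces to reading off Proposition~\ref{prop:prox-prop}\ref{prop:prox-prop:1b} together with the conjectured structure of ${\subdiff} F[\rho_0]$, with no heavy machinery required.
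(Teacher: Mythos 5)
Your proposal is correct and follows essentially the same route as the paper's own proof: both reduce membership in $\prox_{\epsilon F}^{-1}[\rho_0]$ to the condition $\epsilon^{-1}(\rho-\rho_0)\in{\subdiff} F[\rho_0]$ via Proposition~\ref{prop:prox-prop}\ref{prop:prox-prop:1b}, substitute the Hohenberg--Kohn form of the subdifferential, and rearrange to obtain the affine line. Your version is slightly more careful than the paper's (explicitly exhibiting $\rho_*=\rho_0-\epsilon v$ as a member of the preimage and noting that constants lie in $L^2(\Omega)$ because $\Omega$ is bounded), but these are refinements of the same argument, not a different one.
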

\begin{proof}
    Any potential for which $\prox_{\epsilon F}\rho=\rho_0$ is a ground state is on the form $v = -\epsilon^{-1}(\rho - \rho_0) + \mu$, with $\mu\in\R$, and $\rho \in \prox_{\epsilon F}^{-1}[\rho_0]$. Rearranging, $\rho = -\epsilon v + \rho_0 - \epsilon \mu$, from which the result follows.
\end{proof}

We can view $\prox_{\epsilon F}$ as a kind of nonlinear ``projection'', that takes a $\rho \in \Hilb$ and maps it to an (ensemble) $v$-representable density $\rho_\epsilon = \prox_{\epsilon F}\rho$. Moreover, any (ensemble) $v$-representable density can be reached from some $\rho \in \Hilb$. However, it is not true in general that $\prox_{\epsilon F}^{\circ 2} = \prox_{\epsilon F}$, so it is not a true projection.

The fact that $\dom({}^{\epsilon} F)=\Hilb$ may seem like a severe shortcoming of the formalism, as $\Hilb$ contains mostly ``unphysical densitites''. On the other hand, for a given $\rho \in \Hilb$, it is $\prox_{\epsilon F}\rho$ which is the \emph{physical} density. We therefore denote general elements of $\Hilb$ as ``quasidensities''.
\begin{definition}
  Let $\rho \in \Hilb$ be given. We define the \emph{proximal density} $\rho_\epsilon[\rho] := \prox_{\epsilon F}\rho$, the resolvent of ${\subdiff} F$, and the \emph{proximal potential} $v_\epsilon[\rho] := -{}^\epsilon \subdiff F[\rho] = - \epsilon^{-1}(\Id - \prox_{\epsilon F})\rho$, with the Yosida approximation ${}^\epsilon \subdiff F = \nabla {}^\epsilon F$ to the resolvent of ${\subdiff} F$, cf.~Remark~\ref{rem:moreau}.
\end{definition}

\begin{remark}\label{rem:duality}
  The proximal density and potential satisfies
  \[ -v_\epsilon[\rho] \in {\subdiff} F[\rho_\epsilon[\rho]] \iff \rho_\epsilon[\rho] \in {\supdiff} E[v_\epsilon[\rho]]\,, \]
  demonstrating how the proximal mapping generates a $v$-representable density \emph{and} a corresponding potential via the Yosida approximation.
\end{remark}

The regularized functionals are approximations to the exact functionals in the sense of Proposition~\ref{prop:my}. For example, for every quasidensity $\rho \in \mathcal{D}^N$, ${}^\epsilon F[\rho] \to F[\rho]$ from below as $\epsilon\to 0^+$. Moreover, from Proposition~\ref{prop:prox-prop}\ref{prop:prox-prop:3}, we get $\rho_\epsilon \to \rho$. On the other hand, if $\rho \notin \mathcal{D}^N$, then ${}^\epsilon F[\rho] \to +\infty$.

\subsection{Regularized Kohn--Sham theory}

Like in traditional Kohn--Sham theory, we set up a fictitious non-interacting system with an effective potential $v_\text{eff} \in \Hilb$ to be determined in a self-consistent manner. However, whereas in traditional Kohn--Sham the interacting and non-interacting systems are required to have the same \emph{physical} densities, regularized Kohn--Sham theory requires the systems to have the same \emph{quasidensities}. The Moreau--Yosida regularization resolves the two major issues with traditional Kohn--Sham theory: First, that that $\rho$ needs to be both interacting and non-interacting $v$-representable, resolved by $\rho\in\Hilb$ being $v$-representable for any interaction strength in the Moreau--Yosida regularized formulation. Second, that $F$ is not at all differentiable, which makes the set-up of the self-consistent field equations non-rigorous, resolved by ${}^\epsilon F$ being continuously Fr\'echet differentiable by Proposition~\ref{prop:my}\ref{prop:my:4}.

\subsubsection{Adiabatic connection}

To prepare for Kohn--Sham theory, we briefly discuss the \emph{adiabatic connection}, which relates the interacting $N$-electron problem to a non-interacting one. We introduce a connection parameter $\lambda \in [0,1]$, and consider the modified energy functional on $\mathcal{W}_N$ given by
\[ \mathcal{E}^\lambda_0[\psi] = \frac{1}{2}\braket{\nabla\psi,\nabla\psi} + \lambda \braket{\psi,W\psi}, \]
implying that the ground-state energy is a function of $\lambda$ as well as the potential,
\begin{equation}
  E^\lambda[v] := \inf\{ \mathcal{E}_0^\lambda[\psi] + \mathcal{V}[\psi] \mid \psi \in \mathcal{W}_N \}.
\end{equation}
At $\lambda=0$, the problem is ``solvable'' in the sense that the $N$-electron Schdrödinger equation becomes separable, a key motivation behind Kohn--Sham theory.
Similarly, we obtain a $\lambda$-dependent universal functional,
\begin{equation}
  F^\lambda[\rho] = \inf_{\gamma\mapsto \rho} \Tr(\hat{H}[0]^\lambda \gamma)\,.
\end{equation}
The connection to spectral theory of self-adjoint operators in Section~\ref{sec:exact-dft-1} dos not depend on choosing $\lambda = 1$. Indeed, any $\lambda \in \R$ is acceptable. Moreover, the present discussion is valid for both the case $\Omega = \R^3$ and $\Omega$ a finite box.

\begin{proposition}
  Fix $v \in X^*$ and $\rho \in \mathcal{D}^N$. The maps from $\R$ to $\R$ defined by $\lambda \mapsto E^\lambda[v]$ and $\lambda \mapsto F^\lambda[\rho]$  are everywhere finite, concave, left and right differentiable, and almost everywhere differentiable.
\end{proposition}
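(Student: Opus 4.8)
The plan is to fix the potential $v$ (resp.\ density $\rho$) and study the map $\lambda \mapsto E^\lambda[v]$ (resp.\ $\lambda \mapsto F^\lambda[\rho]$) as a one-variable function, extracting its regularity entirely from convexity-type structure. The key observation is that for each \emph{fixed} admissible state, the energy depends on $\lambda$ in an elementary way: for a fixed $\psi \in \mathcal{W}_N$ we have $\mathcal{E}_0^\lambda[\psi] + \mathcal{V}[\psi] = \tfrac{1}{2}\braket{\nabla\psi,\nabla\psi} + \lambda\braket{\psi,W\psi} + \mathcal{V}[\psi]$, which is an \emph{affine} function of $\lambda$. Since $E^\lambda[v]$ is the infimum over $\psi\in\mathcal{W}_N$ of this family of affine functions, it is automatically concave in $\lambda$ (a pointwise infimum of affine functions is concave). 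Likewise, $\Tr(\hat{H}[0]^\lambda\gamma) = \Tr(\hat{H}[0]^0\gamma) + \lambda\,\Tr(W\gamma)$ is affine in $\lambda$ for fixed $\gamma\mapsto\rho$, so $F^\lambda[\rho]$ is a pointwise infimum of affine functions and hence concave in $\lambda$ as well.

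Once concavity is established, the remaining regularity claims are standard facts about concave functions of one real variable. First I would establish finiteness: for $\lambda$ in any bounded interval the affine lower bounds are uniformly controlled, and using the coercivity-type estimates behind the finiteness of $E$ and $F$ (e.g.\ the kinetic-energy bound from Eq.~\eqref{eq:rhoH1-estimate} and Theorem~\ref{thm:E-reg}\ref{thm:E-reg:1}, which bound $E[v]$, together with the fact that $\rho\in\mathcal{D}^N$ guarantees $F^\lambda[\rho]<+\infty$ via an admissible $\gamma$) one sees the values stay in $\R$. A finite concave function on $\R$ is automatically continuous on the interior and possesses left and right derivatives at every point (because the difference-quotient $\lambda\mapsto (g[\lambda+h]-g[\lambda])/h$ is monotone in $h$ by concavity, so the one-sided limits exist); moreover a monotone function is differentiable almost everywhere, which yields almost-everywhere differentiability. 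I would cite these as the classical one-dimensional results for finite concave functions, e.g.\ from van~Tiel~\cite{VanTiel1984}, rather than reprove them.

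The main obstacle is the finiteness claim on \emph{all} of $\R$, not merely on $[0,1]$. For $\lambda<0$ the interaction term $\lambda\braket{\psi,W\psi}$ acquires the ``wrong'' sign, so one must check that the infimum does not run to $-\infty$; here I would use that $W$ (the two-body repulsion) is relatively form-bounded with respect to the kinetic energy with relative bound zero, so that $\tfrac12\braket{\nabla\psi,\nabla\psi}+\lambda\braket{\psi,W\psi}$ remains bounded below on $\mathcal{W}_N$ for every fixed $\lambda$, and the external term $\mathcal{V}[\psi]$ is controlled by the KLMN-type estimate recalled in Section~\ref{sec:exact-dft-1}. The same relative-boundedness argument secures finiteness of $F^\lambda[\rho]$ through the expectation-valued representation of Theorem~\ref{thm:expvalued}. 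With finiteness on all of $\R$ in hand, concavity does the rest, and the proposition follows. The proof for $F^\lambda$ is entirely parallel to that for $E^\lambda$, so I would present the $E^\lambda$ case in full and remark that the density-matrix functional is handled identically with $\gamma$ in place of $\psi$.
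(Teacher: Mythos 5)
Your proposal follows essentially the same route as the paper: write $E^\lambda[v]$ (and $F^\lambda[\rho]$) as a pointwise infimum over states of functions affine in $\lambda$, conclude concavity, and then invoke the classical one-variable facts (van Tiel, Theorems 1.6 and 1.8) that finite concave functions are everywhere left and right differentiable and almost everywhere differentiable. The only difference is that you justify finiteness for all $\lambda\in\R$ via relative form-boundedness of $W$ with respect to the kinetic energy, a point the paper dismisses as ``clear''---your extra care is warranted, since for $\lambda<0$ boundedness below of the infimum is genuinely not automatic.
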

\begin{proof}
  We need to show concavity of $\lambda \mapsto E^\lambda[v]$. This follows from
  $E^\lambda[v] = \inf_{\gamma \in \mathcal{D}^N_\text{op}} \{ A_\gamma + \lambda B_\gamma\}$
  with $A_\gamma = \Tr((T + V)\gamma)$ and $B_\gamma = \Tr(W\gamma)$, i.e., a pointwise infimum of a family of affine functions $\lambda \mapsto \Tr(\hat{H}_0[v] + \lambda \Tr(\hat{W}\gamma)$. Concavity of $\lambda\mapsto F^\lambda[\rho]$ follows immediately. It is clear that the both the maps are finite for all $\lambda\in\R$. By \cite[Theorem 1.6]{VanTiel1984}, concave functions are everywhere left and right differentiable on the interior of their domains, and by Theorem 1.8 in the same reference, almost everywhere differentiable.
\end{proof}

One immediate complication that arises in the adiabatic connection is that the set of ensemble $v$-representable densities may change with $\lambda$, i.e., the set-valued map $\lambda \mapsto \mathcal{B}_N^\lambda \subset \mathcal{D}^N$ is non-trivial. There are no known results that relate the different $\mathcal{B}_N^\lambda$.

Turning to the Moreau--Yosida regularization in the domain $\Omega = [-\ell,\ell]^3$, we obtain $\lambda$-dependent functionals ${}^\epsilon E^\lambda : \Hilb \to \R$ and ${}^\epsilon F^\lambda : \Hilb \to \R$. Fix $\hat{\rho} \in \Hilb$, a target density. For any $\lambda\in\R$, there exists a unique proximal potential,
\[ v_\epsilon^\lambda [\hat{\rho}] = -\nabla {}^\epsilon F^\lambda[\hat{\rho}]\,. \]
The regularized ground-state energy of that potential is
\[ {}^\epsilon E^\lambda[v_\epsilon^\lambda] = {}^\epsilon F^\lambda[\hat{\rho}] + \braket{v^\lambda_\epsilon,\hat{\rho}}, \]
related to the \emph{unregularized} energy as
\[ E^\lambda[v_\epsilon^\lambda] = {}^\epsilon E^\lambda[v_\epsilon^\lambda] + \frac{\epsilon}{2}\|v_\epsilon^\lambda\|^2. \]
Associated with $\hat{\rho}\in\Hilb$ is also the proximal density $\rho_\epsilon^\lambda[\hat{\rho}] = \prox_{\epsilon F^\lambda}[\hat{\rho}]$,
being the true ground-state density of the proximal potential,
\[ E^\lambda[v^\lambda_\epsilon] = F^\lambda[\rho^\lambda_\epsilon] + \braket{v^\lambda_\epsilon,\rho^\lambda_\epsilon}. \]
This is the adiabatic connection in the Moreau--Yosida formulation of DFT.  However, we leave open the perhaps important question of the regularity properties of the maps $(\hat{\rho},\lambda) \mapsto v^\lambda_\epsilon[\hat{\rho}]$ and $(\hat{\rho},\lambda) \mapsto \rho^\lambda_\epsilon[\hat{\rho}]$.

\subsubsection{Kohn--Sham decomposition}

Let $v_\text{ext} \in \Hilb$ be the external potential for which we wish to determine $E^1[v_\text{ext}]$ and its corresponding ground-state density, known to exist by Theorem~\ref{thm:inabox}. (If the ground state of $\hat{H}[v_\text{ext}]$ is degenerate, there will be a convex set of solutions.) By Proposition~\ref{prop:my-conjugate}, this problem is in a simple manner connected to the Moreau--Yosida regularized problem by Proposition~\ref{prop:my-conjugate},
\[ {\supdiff} E^1[v_\text{ext}] = {\supdiff} {}^\epsilon E^1[v_\text{ext}] + \epsilon v_\text{ext}\,, \]
and hence,
\[ \rho \in {\supdiff} {}^\epsilon E^1[v_\text{ext}] \iff \rho_\epsilon^1 = \rho - \epsilon v_\text{ext} \in {\supdiff} E^1[v_\text{ext}]. \]
We connect this problem at $\lambda=1$ to the non-interacting problem at $\lambda=0$, where now the quasidensity is known, whereas the corresponding potential $v_\text{eff} \in \Hilb$ is unknown,
\[ \rho \in {\supdiff} {}^\epsilon E^0[v_\text{eff}] \iff \rho_\epsilon^0 = \rho - \epsilon v_\text{eff} \in {\supdiff} E^0[v_\text{eff}]. \]
We know that the potential $v_\text{eff}\in\Hilb$ exists, since ${}^\epsilon F^0$ is differentiable at $\rho$.
By Proposition~\ref{prop:my}, we have the stationary conditions
\begin{align}
  -v_\text{ext} &= \nabla {}^\epsilon F^1[\rho]\,, \label{eq:myks-stat-1} \\
  -v_\text{eff} &= \nabla {}^\epsilon F^0[\rho]\,. \label{eq:myks-stat-0}
\end{align}
We introduce the \emph{Hartree-exchange correlation energy} ${}^\epsilon E_\text{Hxc}[\rho] := {}^\epsilon F^1[\rho] - {}^\epsilon F^0[\rho]$, which is continuously differentiable with Lipschitz continuous gradient ${}^\epsilon v_\text{Hxc}[\rho] := \nabla {}^\epsilon E_{Hxc}[\rho]$, denoted the Hartree-exchange correlation potential. The condition~\eqref{eq:myks-stat-1} becomes
\begin{equation}
  v_\text{eff} = v_\text{ext} + {}^\epsilon v_\text{Hxc}[\rho]\,. \tag{\ref{eq:myks-stat-1}'}\label{eq:myks-stat-1'}
\end{equation}
The idea is now that it is $E_\text{Hxc}$, and not ${}^\epsilon F^1$, which is available, at least as some kind of approximation. Equations~\eqref{eq:myks-stat-0} and \eqref{eq:myks-stat-1'} is a self-consistent field problem where the density $\rho$ and potential $v_\text{eff}$ are the unknowns.

This motivates the following abstract algorithm:
\begin{algorithm}[Basic MYKS-SCF iteration scheme]\label{algo:1} \leavevmode
  Fix $\epsilon > 0$.
  \begin{enumerate}[label={\arabic*)}]
    \item Choose an initial guess $\rho_0 \in \Hilb$.
    \item For $i = 0,1,\ldots$, iterate:
    \begin{enumerate}[label={\arabic*)}]
    \item Set $v_{\text{eff},i+1} = v_\text{ext} +  {}^\epsilon v_{\text{Hxc}}[\rho_i]$.
    \item Solve the unregularized nonineracting problem $\tilde{\rho}_{i+1} \in {\supdiff} E^0[v_{\text{eff},i+1}]$, and set $\rho_{i+1} = \tilde{\rho}_{i+1} - \epsilon v_{i+1}$. 
    \item If $v_{\text{eff},i+1} = v_\text{ext} + {}^\epsilon v_\text{Hxc}[\rho_{i+1}]$, terminate.
  \end{enumerate}
  \item Return $\rho^1_\epsilon = \rho_{i+1} + \epsilon v_\text{ext}$ as the (physical) ground-state density, and $E^1[v_\text{ext}] = F^0[\rho_{i+1}] + {}^\epsilon E_\text{Hxc}[\rho_{i+1}] + \epsilon \|v_\text{ext}\|^2/2$ as the ground-state energy.
\end{enumerate}
\end{algorithm}
\begin{remark}
    In Step 2.1 of Algorithm~\ref{algo:1}, the problem $\tilde{\rho}_{i+1} \in {\supdiff} E^0[v_{\text{eff},i+1}]$ is equivalent to solving the eigenvalue equation of a one-electron operator $\hat{h} = -\nabla^2/2 + v_{\text{eff},i+1}$. By the Rellich--Kondrachov theorem, this operator has domain $H^2_0(\Omega)$, a compact resolvent and hence a purely discrete spectrum.  The first $N$ eigenfunctions $\varphi_k\in H^2_0(\Omega)$ define a solution $\tilde{\rho}_{i+1} = \sum_{k=1}^N |\varphi_k|^2$. Any degeneracies in the spectrum lead a finite number of solutions, the convex hull of which is ${\supdiff} E^0[v_{\text{eff},i+1}]$. In particular, the superdifferential is always nonempty, so that the noninteracting problem can always be solved.
\end{remark}
\begin{remark}
    The formal limit $\epsilon \to 0^+$ in Algorithm~\ref{algo:1} gives the traditional Kohn--Sham self-consistent field iterations for exact DFT. Of course, the Hartree-exchange correlation potential is not rigorously defined in this case.
\end{remark}

\subsection{Optimal-damping algorithm}

Simple SCF iterations often suffer from bad convergence properties, with phenomena like asymptotic oscillations. This particular behavior can be traced to the fact that each SCF iteration takes too large a step, so that one ``jumps over'' a minimum.
The optimal-damping algorithm (ODA) for the Hartree--Fock SCF iterations were introduced by Cancès and Le Bris in Refs.~\cite{Cances2000,Cances2000b}, in which the ``bare'' SCF density update is damped by a factor in $(0,1]$. In this section, we apply the idea of optimal damping for extended Kohn--Sham iterations from Ref.~\cite{Cances2001} to modify Algorithm~\ref{algo:1}, leading to the Moreau--Yosida regularized Kohn--Sham optimal-damping algorithm (MYKSODA). This algorithm was introduced in Ref.~\cite{Laestadius2018b}, where the convergence of the energy to an upper bound, Theorem~\ref{thm:myksoda}, was proven. A stronger convergence proof for a finite-dimensional setting, Theorem~\ref{thm:myksoda2}, was introduced in Refs.~\cite{Penz2019,Penz2019erratum}, and we here present the full proof with some additional details and corrections.

The ODA modification of Algorithm~\ref{algo:1} consists of a relaxation of the density step, i.e., not setting $\rho_{i+1}=\rho'_{i+1}$, but instead $\rho_{i+1} = \rho_i + t_i\rho'_{i+1}$ for some $t_i \in (0,1]$. In Theorem~\ref{thm:myksoda2}, the step lengths $t_i$ are actually not constrained, but convergence is still guaranteed.
\begin{algorithm}[MYKSODA]\label{algo:2} \leavevmode
  Fix $\epsilon > 0$.
  \begin{enumerate}[label={\arabic*)}]
    \item Set $v_{\text{eff},0} = v_\text{ext}$, and $\rho_0 \in {\supdiff} {}^\epsilon E^0[v_\text{ext}]$.
    \item For $i = 0,1,\ldots$, iterate:
    \begin{enumerate}[label={\arabic*)}]
    \item Set $v_{\text{eff},i+1} = v_\text{ext} +  {}^\epsilon v_{\text{Hxc}}[\rho_i]$.
    \item Solve for $\tilde{\rho}_{i+1} \in {\subdiff} E^0[v_{\text{eff},i+1}]$, and set $\rho_{i+1}' = \tilde{\rho}_{i+1} - \epsilon v_{\text{eff},i+1}$.
    \item Choose $t_i \in (0,1]$ such that $\rho_{i+1} = \rho_i + t_i(\rho_{i+1}' - \rho_i)$ satisfies
    \begin{equation} \braket{\nabla {}^\epsilon F[\rho_{i+1}] + v_\text{ext}, \rho_{i+1}' - \rho_i } \leq 0 \,.\label{eq:oda} \end{equation}
    \item If $v_{\text{eff},i+1} = v_\text{ext} + {}^\epsilon v_\text{Hxc}[\rho_{i+1}]$, terminate.
  \end{enumerate}
  \item Return $\rho^1_\epsilon = \rho_{i+1} + \epsilon v_\text{ext}$ as the (physical) ground-state density, and $E^1[v_\text{ext}] = F^0[\rho_{i+1}] + {}^\epsilon E_\text{Hxc}[\rho_{i+1}] + \epsilon \|v_\text{ext}\|^2/2$ as the ground-state energy.
\end{enumerate}
\end{algorithm}
The MYKSODA algorithm is weakly convergent in the sense that the energy converges to an upper bound:
\begin{theorem}\label{thm:myksoda}
  In Algorithm~\ref{algo:2}, the sequence of energy estimates
  \[ e_i := {}^\epsilon F[\rho_i] + \braket{v_\text{ext},\rho_i} \]
  is strictly descending and hence convergent. Denoting the limit $e[v_\text{ext}]$, we have
  \[ e[v_\text{ext}] + \epsilon \|v_\text{ext}\|^2/2 \geq E^1[v_\text{ext}]\,, \]
  that is, an upper bound property of the computed energy limit.
\end{theorem}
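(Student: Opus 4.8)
The plan is to exploit the convexity and Fréchet differentiability of ${}^\epsilon F = {}^\epsilon F^1$ (Proposition~\ref{prop:my}) by restricting the objective to the line segment traversed in each iteration, and to read off strict descent from the optimal-damping condition \eqref{eq:oda} together with the monotonicity of $\nabla {}^\epsilon F^0$. Concretely, for a fixed iteration $i$ I would introduce the scalar function
\[ \phi_i(t) := {}^\epsilon F[\rho_i + t(\rho_{i+1}'-\rho_i)] + \braket{v_\text{ext}, \rho_i + t(\rho_{i+1}'-\rho_i)}, \qquad t\in[0,1], \]
which is convex and continuously differentiable, with $\phi_i'(t) = \braket{\nabla {}^\epsilon F[\rho_i + t(\rho_{i+1}'-\rho_i)] + v_\text{ext},\, \rho_{i+1}'-\rho_i}$. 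Note that $\phi_i(0) = e_i$, $\phi_i(t_i) = e_{i+1}$, and that the ODA condition \eqref{eq:oda} is precisely the statement $\phi_i'(t_i)\le 0$. Existence of an admissible $t_i\in(0,1]$ then follows from the intermediate value theorem applied to the continuous $\phi_i'$, once $\phi_i'(0)<0$ is established.

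The key step is to show that $\phi_i'(0) < 0$ whenever $\rho_i$ is not already a stationary point, i.e.\ whenever $v_\text{ext} + \nabla {}^\epsilon F^1[\rho_i]\neq 0$. For this I would unwind the definitions in Algorithm~\ref{algo:2}: step~2.1 gives $v_{\text{eff},i+1} = v_\text{ext} + \nabla {}^\epsilon F^1[\rho_i] - \nabla {}^\epsilon F^0[\rho_i]$ via ${}^\epsilon v_\text{Hxc} = \nabla({}^\epsilon F^1 - {}^\epsilon F^0)$, while step~2.2 together with the conjugation identity \eqref{eq:moreau-conj-subdiff} and Proposition~\ref{prop:prox-prop}\ref{prop:prox-prop:1b} yields $\nabla {}^\epsilon F^0[\rho_{i+1}'] = -v_{\text{eff},i+1}$. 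Combining these produces the clean identity
\[ \nabla {}^\epsilon F^0[\rho_{i+1}'] - \nabla {}^\epsilon F^0[\rho_i] = -\bigl(v_\text{ext} + \nabla {}^\epsilon F^1[\rho_i]\bigr), \]
so that $\phi_i'(0) = -\braket{\nabla {}^\epsilon F^0[\rho_{i+1}'] - \nabla {}^\epsilon F^0[\rho_i],\, \rho_{i+1}' - \rho_i}$. Monotonicity of $\nabla {}^\epsilon F^0$ (Proposition~\ref{prop:subdiff}\ref{prop:subdiff:1}) already gives $\phi_i'(0)\le 0$; to obtain strictness I would use the cocoercivity of the Yosida approximation $\nabla {}^\epsilon F^0 = \epsilon^{-1}(\Id-\prox_{\epsilon F^0})$, which follows from firm nonexpansiveness \eqref{eq:nonexp}, in the form $\braket{\nabla {}^\epsilon F^0[\rho_{i+1}'] - \nabla {}^\epsilon F^0[\rho_i],\, \rho_{i+1}'-\rho_i} \ge \epsilon\|\nabla {}^\epsilon F^0[\rho_{i+1}'] - \nabla {}^\epsilon F^0[\rho_i]\|^2$. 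By the displayed identity this lower bound equals $\epsilon\|v_\text{ext}+\nabla {}^\epsilon F^1[\rho_i]\|^2$, which is strictly positive exactly at a non-stationary $\rho_i$, so $\phi_i'(0)\le -\epsilon\|v_\text{ext}+\nabla {}^\epsilon F^1[\rho_i]\|^2 < 0$.

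With $\phi_i'(0)<0$ and $\phi_i'(t_i)\le 0$ in hand, strict descent follows from convexity: since $\phi_i'$ is nondecreasing and continuous, $\phi_i'\le 0$ on all of $[0,t_i]$ while $\phi_i'<0$ on a neighbourhood of $0$, whence $e_{i+1}-e_i = \int_0^{t_i}\phi_i'(s)\,ds < 0$. For the lower bound I would observe that $e_i$ is the value at $\rho_i$ of the objective whose infimum over $\Hilb$ is, by skew conjugation, ${}^\epsilon E^1[v_\text{ext}] = E^1[v_\text{ext}] - \tfrac{\epsilon}{2}\|v_\text{ext}\|^2$ (Proposition~\ref{prop:my-conjugate}); hence $e_i \ge E^1[v_\text{ext}] - \tfrac{\epsilon}{2}\|v_\text{ext}\|^2$ for every $i$. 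Thus $\{e_i\}$ is strictly decreasing and bounded below, so it converges to some $e[v_\text{ext}]$ inheriting the bound, which gives $e[v_\text{ext}] + \tfrac{\epsilon}{2}\|v_\text{ext}\|^2 \ge E^1[v_\text{ext}]$.

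The main obstacle I anticipate is the bookkeeping in the second paragraph: correctly threading the conjugation identity \eqref{eq:moreau-conj-subdiff} to convert the superdifferential condition on $E^0$ into the gradient identity $\nabla {}^\epsilon F^0[\rho_{i+1}'] = -v_{\text{eff},i+1}$, and ensuring that ``not yet terminated'' is read precisely as the condition $v_\text{ext}+\nabla {}^\epsilon F^1[\rho_i]\neq 0$ that renders the cocoercivity estimate strict. The remaining convexity/descent reasoning and the lower-bound argument are then routine.
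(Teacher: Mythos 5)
Your proof is correct and takes essentially the same route as the paper's: both reduce to the directional derivative of the objective along $\rho_{i+1}'-\rho_i$, establish the same key bound $\phi_i'(0) \leq -\epsilon\|v_\text{ext}+\nabla{}^\epsilon F^1[\rho_i]\|^2$ (the paper's Eq.~\eqref{eq:grad-G-mono}), and conclude strict descent plus the lower bound via ${}^\epsilon E^1[v_\text{ext}] = E^1[v_\text{ext}] - \tfrac{\epsilon}{2}\|v_\text{ext}\|^2$. The only difference is cosmetic: you obtain the bound from cocoercivity (firm nonexpansiveness) of the Yosida approximation $\nabla{}^\epsilon F^0$, whereas the paper obtains the identical inequality from monotonicity of ${\supdiff} E^0$ applied to the unregularized pairs $(\sigma_{i+1},\tau_i)$ --- equivalent facts related through the shift identity \eqref{eq:moreau-conj-subdiff}.
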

\begin{proof}
  It is clear that step 2.2 produces $\rho'_{i+1}\in \underline{\partial}{}^\epsilon E^0[v_{\text{eff},i+1}]$. We need to check that step 2.3 is well-defined. To that end, consider the function $G[\rho] := {}^\epsilon F^1[\rho] + \braket{v_\text{ext},\rho}$, which is to be minimized by the iterations. We note that step 1.1 can be rewritten $v_{\text{eff},i+1} + \nabla {}^\epsilon F^0[\rho_i] = v_\text{ext} + \nabla {}^\epsilon F^1[\rho_i]$. The directional derivative of $G$ at $\rho$ in the direction $\Delta\rho_i := \rho_{i+1}'-\rho_i$ is $G'[\rho;\Delta\rho_i] = \braket{\nabla {}^\epsilon F^1[\rho] + v_\text{ext},\rho_{i+1}'-\rho_i}$, which gives
  \begin{equation*}
    \begin{split}
   G'[\rho_i;\rho_{i+1}'-\rho_i] &= 
    \braket{\nabla {}^\epsilon F^0[\rho_i] + v_{\text{eff},i+1}, \rho'_{i+1} - \rho_i}.
    \end{split}
 \end{equation*}
 If the termination criterion was met at the previous iteration, $G'[\rho_i;\rho_{i+1}'-\rho_i]=0$, and step 2.3 will not take place. Otherwise, since $\rho_i \in \supdiff {}^\epsilon E^0[-\nabla {}^\epsilon F^0[\rho_i]]$ (see Remark~\ref{rem:duality}), write
 \[ \sigma_{i+1} := \rho'_{i+1} + \epsilon v_{\text{eff},i+1} \in \supdiff E^0[v_{\text{eff},i+1}], \]
 \[ \tau_i:= \rho_i + \epsilon\nabla{}^\epsilon F^0[\rho_i] \in \supdiff E^0[-\nabla{}^\epsilon F[\rho_i]]. \]
This allows us to rewrite the directional derivative as
 \[ G'[\rho_i;\rho_{i+1}'-\rho_i] = \braket{v_{\text{eff},i+1} + \nabla {}^\epsilon F^0[\rho_i], \sigma_{i+1} - \tau_{i} } - \epsilon\| v_{\text{eff},i+1} + \nabla {}^\epsilon F^0[\rho_i]\|^2. \]
 By monotonicity of the superdifferential (Prop.~\ref{prop:subdiff}\ref{prop:subdiff:1}), the bracket is always nonpositive, which by step 2.1 in Algorithm~\ref{algo:2} gives
 \begin{equation}\label{eq:grad-G-mono} G'[\rho_i;\Delta\rho_i] \leq -\epsilon^{-1} \| v_{\text{eff},i+1} + \nabla {}^\epsilon F^0[\rho_i]\|^2 = - \epsilon\| v_{\text{ext}} + \nabla {}^\epsilon F^1[\rho_i]\|^2 . \end{equation}
 Since now $G[\rho_i]$ is strictly decreasing and convex in the direction $\Delta\rho_i$ ($t\mapsto G[\rho_i + t\Delta\rho_i]$ is \emph{strongly} convex at $t=0$), it follows that there is a maximum step length $t_i > 0$ such that $e_{i+1} = G[\rho_{i+1}] < G[\rho_i] = e_i$, and such that Eq.~\eqref{eq:oda} holds, i.e., $G'[\rho_{i+1},\rho_{i+1}'-\rho_i] \leq 0$.
 Since $e_i \geq \inf_\rho G[\rho] = {}^\epsilon E^1[v_\text{ext}]$ for all $i$, it follows that the monotonically decreasing sequence $e_i$ converges to an upper bound of ${}^\epsilon E^1[v_\text{ext}] = E^1[v_\text{ext}] - \epsilon \|v_\text{ext}\|^2/2$.
\end{proof}



In the above, the step lengths $t_i \in (0,1]$ were not specified. It is clear that any stronger convergence properties of the algorithm depend crucially on these. The final result of this section, paraphrased from Refs.~\cite{Penz2019,Penz2019erratum}, exploits the definition of the Moreau--Yosida regularization at every step. The key idea is the following interpretation of Prop.~\ref{prop:my}\ref{prop:my:4}: At every $x \in\mathcal{H}$, ${}^\epsilon f$ is tangent to the regularization parabola $h_x(z) := f(\prox_{\epsilon f} x) + \|z-\prox_{\epsilon f}x\|^2/(2\epsilon)$, i.e., evaluated at $x$, both have gradient $\epsilon^{-1}(x -\prox_{\epsilon f} x)$ and their graphs are touching at $x$. Moreover, by definition ${}^\epsilon f \leq h_z$. The step sizes $t_i$ are now chosen such that $\rho_{i+1}$ minimizes the regularization parabola associated with $G[\rho_i]$ along the search direction $\Delta\rho_i$. However, one must assume that $\mathcal{H}$ is finite-dimensional for this to guarantee convergence. A finite dimensional $\mathcal{H}$ arises, e.g., when the $N$-electron Hilbert space is discretized by a finite-dimensional basis generated by a finite set of single-particle functions, and does not change the properties of $E$ or $F$ in any way relevant for the current discussion.
\begin{theorem}\label{thm:myksoda2}
    Assume that $\mathcal{H}$ is finite dimensional. Then
    there exists a sequence of step lengths $\{t_i\}_i$ for Algorithm~\ref{algo:2} such that $\{\rho_i\}$  has energies $e_{i} = {}^\epsilon F[\rho_{i}] + \braket{v_\text{ext},\rho_{i}}$ converging to the exact result ${}^\epsilon E[v_\text{ext}]$. Moreover, $\{\rho_i\}$
    is the union of convergent subsequences $\rho_{i_n} \to \sigma \in \mathcal{H}$, such that $\sigma \in \supdiff {}^\epsilon E[v_\text{ext}]$ is an exact ground-state quasidensity, and $v_{\text{eff},i} \to v_\text{ext} + {}^\epsilon v_{\text{Hxc}}[\sigma]$.
\end{theorem}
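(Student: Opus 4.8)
The plan is to recast the iteration as a descent method on the convex, Fréchet differentiable objective $G[\rho] := {}^\epsilon F^1[\rho] + \braket{v_\text{ext},\rho}$, whose gradient $g[\rho] := \nabla {}^\epsilon F^1[\rho] + v_\text{ext}$ is Lipschitz with constant $\epsilon^{-1}$ (Proposition~\ref{prop:my}\ref{prop:my:4}) and whose minimum value is $\min_\rho G[\rho] = {}^\epsilon E[v_\text{ext}]$, attained on the nonempty set $\supdiff {}^\epsilon E[v_\text{ext}]$ (which coincides with the minimizer set of $G$ via the equivalences of Proposition~\ref{prop:subdiff}\ref{prop:subdiff:3}). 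The quantities in the theorem are then $e_i = G[\rho_i]$ and $g_i := g[\rho_i]$. I would choose the step length $t_i\in(0,1]$ to minimize, along the search direction $\Delta\rho_i := \rho'_{i+1}-\rho_i$, the regularization parabola $P_i[\rho] := F^1[p_i] + (2\epsilon)^{-1}\|\rho - p_i\|^2 + \braket{v_\text{ext},\rho}$ with $p_i := \prox_{\epsilon F^1}\rho_i$, truncating the unconstrained minimizer to $(0,1]$ (and terminating with the exact answer if ever $g_i=0$ or $\Delta\rho_i=0$). The two facts driving everything are $G \leq P_i$ everywhere, $G[\rho_i]=P_i[\rho_i]$, and matching gradients at $\rho_i$; these are exactly the tangency properties of the Moreau envelope noted before the theorem.

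First I would record the descent estimate. Writing $s_i := -\braket{g_i,\Delta\rho_i}$ and $q_i := \epsilon^{-1}\|\Delta\rho_i\|^2$, the parabola restricted to the line is $t\mapsto G[\rho_i] - s_i t + \tfrac12 q_i t^2$, so the guaranteed decrease is $\delta_i := s_i^2/(2q_i)$ when $s_i\le q_i$ and $\delta_i := s_i - q_i/2 > s_i/2$ when $s_i > q_i$. Since $G\le P_i$, the true decrease $e_i - e_{i+1}$ dominates $\delta_i$, so $\{e_i\}$ is strictly decreasing; being bounded below by ${}^\epsilon E[v_\text{ext}]$ it converges, and $\sum_i \delta_i < +\infty$, whence $\delta_i\to 0$. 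Using $\braket{g_i,\Delta\rho_i}\le -\epsilon\|g_i\|^2$ from Eq.~\eqref{eq:grad-G-mono} (so $s_i \ge \epsilon\|g_i\|^2$) together with an a priori bound $\|\Delta\rho_i\|\le D$, one obtains $\delta_i \ge \min(c_1\|g_i\|^4, c_2\|g_i\|^2)$ for positive constants $c_1,c_2$; hence $\delta_i\to 0$ forces $\|g_i\|\to 0$.

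The a priori bound, and the boundedness of $\{\rho_i\}$ it rests on, is where finite-dimensionality is essential. I would argue that in finite dimensions $\mathcal{D}^N$ is a bounded subset of $\Hilb$, so that both the noninteracting ground-state density $\tilde\rho_{i+1}\in\supdiff E^0[v_{\text{eff},i+1}]\subset\mathcal{D}^N$ and the proximal densities $\prox_{\epsilon F^\lambda}\rho_i\in\mathcal{B}_N\subset\mathcal{D}^N$ have norm at most some $C$. Then ${}^\epsilon v_\text{Hxc}[\rho_i]=\epsilon^{-1}(\prox_{\epsilon F^0}\rho_i-\prox_{\epsilon F^1}\rho_i)$ is bounded independently of $\rho_i$, so $v_{\text{eff},i+1}$ and hence $\rho'_{i+1}=\tilde\rho_{i+1}-\epsilon v_{\text{eff},i+1}$ are uniformly bounded; because $\rho_{i+1}=(1-t_i)\rho_i+t_i\rho'_{i+1}$ is a convex combination, $\{\rho_i\}$ is bounded, and then so are $\{g_i\}$ (gradient Lipschitz) and $\{\Delta\rho_i\}$, securing the constant $D$.

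Finally I would convert $\|g_i\|\to 0$ into the stated conclusions. Convexity gives $e_i - {}^\epsilon E[v_\text{ext}] = G[\rho_i]-\min G \le \|g_i\|\,\operatorname{dist}(\rho_i,\supdiff {}^\epsilon E[v_\text{ext}])\to 0$ by boundedness, establishing energy convergence. For the cluster points, boundedness in the finite-dimensional $\Hilb$ makes $\{\rho_i\}$ relatively compact; for any convergent subsequence $\rho_{i_n}\to\sigma$, continuity of $g[\cdot]$ and $g_i\to 0$ give $g[\sigma]=0$, i.e. $-v_\text{ext}\in\subdiff {}^\epsilon F[\sigma]$, equivalently $\sigma\in\supdiff {}^\epsilon E[v_\text{ext}]$, an exact ground-state quasidensity; continuity of ${}^\epsilon v_\text{Hxc}$ then yields $v_{\text{eff},i_n+1}\to v_\text{ext}+{}^\epsilon v_\text{Hxc}[\sigma]$. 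Every subsequence has a further subsequence of this type, so $\{\rho_i\}$ is covered by such convergent subsequences (and converges outright if the minimizer set is a singleton). The main obstacle is the coupling of the descent estimate with boundedness: pinning down the per-step decrease through the two-case parabola minimization while securing the uniform bound $\|\Delta\rho_i\|\le D$ — which is precisely the step that fails without the finite-dimensional compactness of $\mathcal{D}^N$.
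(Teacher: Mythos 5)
Your proposal is correct and follows the same core strategy as the paper: majorize $G$ by the regularization parabola tangent at $\rho_i$, take the exact line-minimizer of that parabola as the step, telescope the guaranteed decreases, combine with the bound \eqref{eq:grad-G-mono} to force $\|\nabla {}^\epsilon F[\rho_i]+v_\text{ext}\|\to 0$, and then use finite-dimensional compactness to extract convergent subsequences whose limits satisfy the stationarity condition. The differences are in execution, and they are worth noting. First, you truncate the parabola minimizer to $(0,1]$ and handle the truncated case with a two-case decrease estimate, whereas the paper takes the unconstrained minimizer (the remark preceding the theorem explicitly waives the $(0,1]$ constraint); your version is thus more faithful to Algorithm~\ref{algo:2} as literally stated, though you should also verify that your steps satisfy Eq.~\eqref{eq:oda} — this does hold for any $t_i$ up to the parabola minimizer, because ${}^\epsilon F$ is an infimal convolution with a quadratic of curvature $\epsilon^{-1}$, so $G$ minus the parabola's quadratic part is concave along the search line and its derivative stays $\leq 0$ there; neither your write-up nor the paper spells this out. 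Second, your boundedness argument is genuinely different in mechanism: you exploit $t_i\leq 1$ (so each $\rho_{i+1}$ is a convex combination) together with the uniform bound on $\rho'_{i+1}$ coming from the neat identity ${}^\epsilon v_\text{Hxc}[\rho]=\epsilon^{-1}(\prox_{\epsilon F^0}\rho - \prox_{\epsilon F^1}\rho)$ and the boundedness of $\mathcal{D}^N$ in finite dimension, whereas the paper uses coercivity of ${}^\epsilon F$ (quadratic growth off the bounded $\dom(F)$) together with the monotone energies; the paper's route works for unconstrained step lengths, while yours requires $t_i\leq 1$, so the two choices are coupled. Third, you prove energy convergence $e_i\to{}^\epsilon E[v_\text{ext}]$ explicitly via the convexity estimate $G[\rho_i]-\min G\leq \|g_i\|\operatorname{dist}(\rho_i,\supdiff{}^\epsilon E[v_\text{ext}])$, which is cleaner than the paper's implicit argument through continuity of $G$ along subsequences.
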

\begin{proof}
Let $p_i := \prox_{\epsilon F}\rho_i$. Rewritten in terms of this proximal point, we have
\begin{equation}\label{eq:G}
    \begin{split}
        G[\rho_i] &= F[p_i] + \braket{v_\ext,\rho_i} + \frac{1}{2\epsilon}\|\rho_i-p_i\|^2 \\
        &= F[p_i] + \braket{v_\ext,p_i} - \frac{\epsilon}{2}\|v_\ext\|^2 + \frac{1}{2\epsilon}\|\rho_i-p_i+\epsilon v_\ext\|^2.
    \end{split}
\end{equation}  
The regularization parabola associated with $G[\rho]$ is similarly obtained, noting that the linear term $\braket{v_\ext,\rho}$ of $G$ is easily absorbed,
\begin{equation}\label{eq:h} h_{\rho_i}[\sigma] := F[p_i] + \braket{v_\ext,p_i} - \frac{\epsilon}{2}\|v_\ext\|^2 + \frac{1}{2\epsilon}\|\sigma-p_i+\epsilon v_\ext\|^2. \end{equation}
We note that the global minimum of $h_{\rho_i}$ is at $p_* := p_i - \epsilon v_\ext$, and that $\nabla h_{\rho_i}[\rho_i] = \nabla G[\rho_i]$, by the remarks preceding the theorem.
Let $f(t) = G[\rho_i + t \Delta\rho_i]$, which by Eq.~\eqref{eq:grad-G-mono} satisfies $f'(0)<0$. At all $t$, $f(t) \leq g(t) := h_{\rho_i}[\rho_i + t\Delta \rho]$. Its derivative is
\[ g'(t) = \epsilon^{-1}\braket{\rho_i - p_* + t\Delta \rho,\Delta \rho}, \quad g'(0)<0. \]
Define $t_i>0$ uniquely (and hence $\rho_{i+1}=\rho_i + t_i\Delta\rho_i$) by the condition $g'(t_0)=0$, which gives 
\[ t_i = - \|\Delta\rho_i\|^{-2}\braket{\rho_i - p_*,\Delta \rho}, \]
and the  equivalent orthogonality condition $\braket{\rho_{i+1}-p_*,\rho_{i+1}-\rho_i}=0$. Geometrically, $\rho_{i+1}-\rho_i$ is the orthogonal projection of $\rho_*-\rho_i$ onto the search line. A brief calculation gives
\begin{equation} \label{eq:step-length}
    \begin{split}
     \braket{\nabla G[\rho_i],\Delta\rho_i} &= g'(0) = t_i^{-1} \braket{\nabla{}^\epsilon F[\rho_i] + v_\ext,\rho_{i+1}-\rho_i} = 
     \\ &= t_i^{-1}\epsilon^{-1}\braket{\rho_i - \rho_{i+1} + \rho_{i+1} - p_*,\rho_{i+1}-\rho_i} \\ &= t_i^{-1}\epsilon^{-1}\braket{\rho_i-\rho_{i+1},\rho_{i+1}-\rho_i}  \\ &= -t_i\epsilon^{-1}\|\Delta\rho_i\|\|\rho_{i+1}-\rho_i\| = - \frac{\|\Delta\rho_i\|^2t_i^2}{\epsilon}.
\end{split}
\end{equation}
Let $m_i := h_{\rho_i}[\rho_{i+1}]$ be the minimum of the parabola section. We have $e_{i+1} \leq m_i$. Subtracting Eq.~\eqref{eq:h} with $\sigma = \rho_{i+1}$ from Eq.~\eqref{eq:G} and using orthogonality, we obtain
\[
\frac{t_i^2\|\Delta\rho_i\|^2}{2\epsilon} =  \frac{1}{2\epsilon}\|\rho_{i+1} - \rho_i\|^2 = e_i - m_i \leq e_i - e_{i+1} \to 0.
\]
We now show that $\rho_i$ converges. By assumption, $\dom(F)$  is bounded in $\mathcal{H}$. It is easy to see that ${}^\epsilon F[\rho] = O(\|\rho\|^2)$ as $\|\rho\|\to+\infty$. Since $e_i = {}^\epsilon F[\rho_i] + \braket{v_\ext,\rho_i} \leq e_1$ for all $i$, $\{\rho_i\}$  (which is not a subset of $\dom(F)$!) is bounded in $\mathcal{H}$. This again implies, by Prop.~\ref{prop:my}\ref{prop:my:4}, that $\|\nabla {}^\epsilon F^\lambda(\rho_i)\|$ is bounded, hence $\{v_{\text{eff},i}\}$ is bounded. Equation~\eqref{eq:grad-G-mono} combined with Eq.~\eqref{eq:step-length} gives
\[
t_i^{-1}\|\rho_{i+1} - \rho_i\|\|\Delta\rho_i\| = \|\Delta\rho_i\|^2 t_i^2 \geq \epsilon^2 \|\nabla{}^\epsilon F[\rho_i] + v_\text{ext} \|^2 = \epsilon^2 \|\nabla{}^\epsilon F^0[\rho_i] + v_{\text{eff},i} \|^2.
\]
Since $\|\Delta\rho_i\|t_i \to 0$, we now have
$\|\nabla{}^\epsilon F^0[\rho_i] + v_{\text{eff},i}\|\to 0$ and $\|\nabla{}^\epsilon F[\rho_i] + v_{\ext}\|\to 0$. The latter implies that $v_\ext = -\lim_{i\to+\infty} \nabla{}^\epsilon F[\rho_i]$. Let $\sigma\in\mathcal{H}$ be  an accumulation point of $\{\rho_i\}$, which is guaranteed to exist by the Bolzano--Weierstrass theorem. There is a convergent subsequence $\rho_{i_n}\to\sigma$ for which we have $v_\ext = -\lim_{n} \nabla{}^\epsilon F[\rho_{i_n}] = -\nabla{}^\epsilon F[\sigma]$. But then $\sigma \in \supdiff {}^\epsilon E[v_\ext]$ is a ground-state quasidensity of the exact regularized problem. We next obtain $\lim_{n\to+\infty} v_{\text{eff},i_n+1} = -\lim_{n\to+\infty}\nabla{}^\epsilon F^0[\rho_{i_n}] = - \nabla{}^\epsilon F^0[\sigma] = v_\text{ext} + {}^\epsilon v_{\text{Hxc}}[\sigma]$. Since the accumulation point was arbitrary, the sequence $\{\rho_i\}$ splits into subsequences converging to different ground-state densities $\sigma$ as claimed.
\end{proof}

\begin{remark}
    The step length choice in the proof of Theorem~\ref{thm:myksoda2} are not practical (if one can consider the MYKS scheme practical at all), since they require the whereabouts of the proximal point $\prox_{\epsilon F}\rho_i$. But knowledge of that would make Moreau--Yosida regularization approach redundant.
\end{remark}

\subsection{Density-functional approximations}

The preceding discussion so far has pertained to the \emph{exact} ground-state energy functional $E^\lambda \in \Gamma_0(\Hilb)$, which is an abstract setting far from actual numerical calculations. We now make some simple observations that opens up the possibility of a rigorous treatment of Kohn--Sham theory for model density functionals~\cite{Parr1989}, including the application and further analysis of the MYKSODA iteration scheme.

The Kohn--Sham decomposition relied on a family $\mathcal{U} = \{ E^\lambda \in -\Gamma_0(\Hilb) \mid \lambda\in[0,1]\}$. This family was such that $\supdiff E^{\lambda}[v] \neq \emptyset$ for any $v\in \Hilb$, and generates the family $\{ {}^\epsilon E^\lambda : \Hilb\to\R \mid \lambda\in[0,1] \}$ and the corresponding Moreau envelopes $\{ {}^\epsilon F^\lambda : \Hilb\to\R \mid \lambda\in[0,1] \}$. Moreover, the idea was present that solving for $\rho \in \supdiff {}^\epsilon E^0[v]$ for arbitrary $v\in\Hilb$, i.e., the noninteracting problem, was in some sense easy compared to solving for $\rho_\epsilon^1 \in \supdiff {}^\epsilon E^1[v_\text{ext}]$. Properties of the family $\mathcal{U}$ that were \emph{not} used in the Kohn--Sham decomposition was the concavity in the parameter $\lambda \in [0,1]$.

In Algorithm~\ref{algo:1}, the basic MYKS-SCF iteration scheme, the existence of solutions $\rho \in \supdiff {}^\epsilon E^0[v]$ for arbitrary $v\in\Hilb$ was essential, but otherwise no additional properties of $\mathcal{U}$ were used. In Algorithm~\ref{algo:2}, MYKSODA, and Theorem~\ref{thm:myksoda}, the same is true.

This motivates the introduction of an approximation, or model, ${}^\epsilon \tilde{E}_\text{Hxc}$ such that ${}^\epsilon \tilde{F}^1 = {}^\epsilon F^0 + {}^\epsilon \tilde{E}_\text{Hxc} \in \Gamma_0(\Hilb)$ is continuously differentiable, replacing the exact Moreau--Yosida regularization in Algorithm~\ref{algo:1} and \ref{algo:2}. Of course, \emph{how} to obtain such models is a different matter.

A second approach is to consider the \emph{exact} $\lambda$-dependent Hartree-exchange correlation functional $E^\lambda_\text{Hxc} = F^\lambda-F^0$, a functional with domain $\mathcal{D}^N$ but of otherwise unknown composition, and introduce a model functional $\tilde{E}^\lambda_{\text{Hxc}} : \mathcal{D}^N \to \R$ and corresponding model universal functionals, viz.,
\[ \tilde{F}^\lambda[\rho] := F^0[\rho] + \tilde{E}_\text{Hxc}^\lambda [\rho]. \]
This family may or may not be convex, but suppose for simplicity that it is, and that $\tilde{E}_{\text{Hxc}}^\lambda$ is in some sense ``cheap'' but also sufficiently regular as to keep $\tilde{F}^\lambda \in \Gamma_0(\Hilb)$. The family induces a model energy $\tilde{E}^\lambda \in -\Gamma_0(\Hilb)$ such that $\tilde{E}^0 = E^0$, retaining the ease of solution in the non-interacting limit.

The next step is to consider the Moreau--Yosida regularized model Hartree-exchange correlation energy,
\[ {}^\epsilon \tilde{E}_\text{Hxc}^\lambda[\rho] := {}^\epsilon \tilde{F}^\lambda[\rho] - {}^\epsilon F^0[\rho] = {}^\epsilon (F^0 + \tilde{E}^\lambda_\text{Hxc})[\rho] - {}^\epsilon F^0[\rho]. \]
The main problem is now to identify how and when ${}^\epsilon \tilde{E}^\lambda_{\text{Hxc}}$ can be easy to calculate given that it involves a non-trivial convolution, which may negate the ease of evaluation of $\tilde{E}^\lambda_\text{Hxc}[\rho]$.

\section{Conclusion}
\label{sec:conclusion}

In this chapter, we have reformulated exact DFT as introduced by Lieb in terms of the Moreau--Yosida regularization. Our starting point was the $N$-electron problem. Not considered were grand-canonical ensembles at finite or zero temperature, which adds some interesting aspects to the formalism~\cite{Parr1989}. Resulting in a compelling reformulation of DFT, including a rigorous formulation of Kohn--Sham theory, Moreau--Yosida regularization still leaves open the question of how to find functional approximations to the Hartree-exchange correlation energy.

The mathematical understanding of Moreau--Yosida regularized Kohn--Sham theory is in its early stages. For example, Theorem~\ref{thm:myksoda2} requires a finite-dimensional density space. Stronger and more general results are of course highly desired, and could turn Moreau--Yosida Kohn--Sham theory a practical tool when combined with approximate Hartree-exchange correlation functionals.


In current-density functional theory (CDFT), the ground-state energy $E[v,\mathbf{A}]$ of an $N$-electron system in the external potential $v$ and magnetic potential $\mathbf{A}$ is considered, leading to both the density $\rho$ and the current density $\mathbf{j} : \R^3\to\R^3$ being variables in the constrained-search functional~\cite{Vignale1987}. This requires a more general setting than that of a Hilbert space of densities. In Ref.~\cite{Laestadius2018b}, Moreau--Yosida regularized DFT was introduced in the abstract setting of a reflexive Banach space of densities, thereby accommodating CDFT and demonstrating that the regularization approach may also be important in other density-functional settings than the standard one considered in this chapter.

\bibliographystyle{unsrt}
\bibliography{refs.bib}

\end{document}